\pdfoutput=1
\RequirePackage{ifpdf}
\ifpdf 
\documentclass[pdftex]{sigma}
\else
\documentclass{sigma}
\fi

\numberwithin{equation}{section}

\newtheorem{Theorem}{Theorem}[section]

\newtheorem{Lemma}[Theorem]{Lemma}
\newtheorem{Proposition}[Theorem]{Proposition}
 { \theoremstyle{definition}

\newtheorem{Remark}[Theorem]{Remark} }

\newcommand{\N}{\mathbb{N}}
\newcommand{\C}{\mathbb{C}}
\newcommand{\R}{\mathbb{R}}
\newcommand{\Z}{\mathbb{Z}}
\newcommand{\RR}{\mathbb{R}^d}
\newcommand{\RRR}{\mathbb{R}^d_+}

\def\eps{\varepsilon}

\def\om{\omega}
\def\a{\alpha}

\def\t{\theta}

\def\z{\zeta}
\def\dersym{\delta^{\textrm{sym}}}
\def\dersymstate{\delta^{\textrm{\emph{sym}}}}

\DeclareMathOperator{\id}{Id}

\DeclareMathOperator{\e}{{\mathbb{E}\textrm{{xp}}}}
\DeclareMathOperator{\ee}{{\mathbb{E}\textrm{\emph{xp}}}}
\DeclareMathOperator{\lo}{{\mathbb{L}\textrm{og}}}
\DeclareMathOperator{\loo}{{\mathbb{L}\textrm{\emph{og}}}}
\DeclareMathOperator{\Id}{Id}

\newcommand{\B}{\mathbb{B}}
\DeclareMathOperator{\supp}{supp}

\def\piint{\Pi_{\alpha + \eta + \boldsymbol{1} + \varepsilon}(ds)}
\def\piinta{\Pi_{\alpha + \eta + 1 + \varepsilon}(ds)}
\def\bb{(\z,q_{\pm})}

\def\bbb{(\z,q_{\pm}(x,y,s))}

\def\eee{(\z,q_{\pm}({x \vee x'},y,s))}
\def\fff{(\z,q_{\pm}(x,y \vee y',s))}
\def\zz{(\z,q_{\pm}(x+z,y,s))}

\def\1{\Pi_{\alpha+\mathbf{1}+\eps}(ds)}

\begin{document}


\newcommand{\arXivNumber}{1602.05369}

\renewcommand{\PaperNumber}{096}

\FirstPageHeading

\ShortArticleName{On Harmonic Analysis Operators in Laguerre--Dunkl and Laguerre-Symmetrized Settings}

\ArticleName{On Harmonic Analysis Operators in Laguerre--Dunkl\\ and Laguerre-Symmetrized Settings}

\Author{Adam NOWAK~$^\dag$, Krzysztof STEMPAK~$^\ddag$ and Tomasz Z.~SZAREK~$^\dag$}

\AuthorNameForHeading{A.~Nowak, K.~Stempak and T.Z.~Szarek}

\Address{$^\dag$~Institute of Mathematics, Polish Academy of Sciences, \\
\hphantom{$^\dag$}~\'Sniadeckich 8, 00--656 Warszawa, Poland}
\EmailD{\href{mailto:adam.nowak@impan.pl}{adam.nowak@impan.pl}, \href{mailto:szarektomaszz@gmail.com}{szarektomaszz@gmail.com}}

\Address{$^\ddag$~Faculty of Pure and Applied Mathematics, Wroc\l{}aw University of Science and Technology, \\
\hphantom{$^\ddag$}~Wyb{.}\ Wyspia\'nskiego 27, 50--370 Wroc\l{}aw, Poland}
\EmailD{\href{mailto:krzysztof.stempak@pwr.edu.pl}{krzysztof.stempak@pwr.edu.pl}}

\ArticleDates{Received May 25, 2016, in f\/inal form September 23, 2016; Published online September 29, 2016}

\Abstract{We study several fundamental harmonic analysis operators in the multi-dimen\-sional context of the Dunkl
harmonic oscillator and the underlying group of ref\/lections isomorphic to $\mathbb{Z}_2^d$. Noteworthy,
we admit negative values of the multiplicity functions. Our investigations include maximal operators,
$g$-functions, Lusin area integrals, Riesz transforms and multipliers of Laplace and Laplace--Stieltjes type.
By means of the general Calde\-r\'on--Zygmund theory we prove that these operators are bounded on weighted
$L^p$ spaces, \mbox{$1 < p < \infty$}, and from weighted $L^1$ to weighted weak $L^1$. We also obtain similar
results for analogous set of operators in the closely related multi-dimensional Laguerre-symmetrized
framework. The latter emerges from a symmetrization procedure proposed recently by the f\/irst two authors.
As a by-product of the main developments we get some new results in the multi-dimensional Laguerre function
setting of convolution type.}

\Keywords{Dunkl harmonic oscillator; generalized Hermite functions; negative multiplicity function; Laguerre expansions of convolution type; Bessel harmonic oscillator; Laguerre--Dunkl expansions; Laguerre-symmetrized expansions; heat semigroup; Poisson semigroup; maximal operator; Riesz transform; $g$-function; spectral multiplier; area integral; Calder\'on--Zygmund operator}

\Classification{42C99; 42C10; 42C20; 42B20;	42B15; 42B25}

\section{Introduction} \label{sec:intro}

Analysis related to Dunkl operators, that is dif\/ferential-dif\/ference operators associated with ref\/lection groups, is an important development in modern mathematics. For basic concepts of this theory see Dunkl's pioneering work \cite{D} and, for instance, the survey article by R\"osler~\cite{R}. Harmonic analysis related to the Dunkl harmonic oscillator (DHO in short) has been intensively studied in recent years by the authors \cite{NoSt4,NoSt3,NoSt6,TZS2,TZS3} and many other mathematicians, see, e.g., \cite{A,AT,BSa,BRT,BT,N,W} and references therein. A commonly appearing assumption in this literature is that the underlying multiplicity function is non-negative. The latter postulate is critical in several important aspects of the Dunkl theory, like existence of a convolution structure or existence and uniqueness of the intertwining operator. Nevertheless, there are few papers like~\cite{AC1,AC2} and the very recent paper~\cite{NoSt7} by the f\/irst two authors, where certain harmonic analysis aspects for the one-dimensional DHO are successfully studied with negative values of the multiplicity function admitted. This in a large extent motivated our present research since then the following question naturally arises. Does it make sense to study basic harmonic analysis operators in the context of the multi-dimensional DHO when the underlying multiplicity function takes also negative values? The answer we f\/ind is def\/initely af\/f\/irmative, at least in the special case when the associated group of ref\/lections is isomorphic to~$\mathbb{Z}_2^d$. It occurs that negative multiplicity functions have the same rights in the sense of results obtained, but not in the sense of the associated analysis which is in this case more sophisticated and requires more subtle techniques. Similar conclusions were drawn recently in the more elementary Dunkl Laplacian context and the same group of ref\/lections by Castro and the third-named author~\cite{CaSz}. The latter paper was in fact an important inspiration for the present one.

The context of the DHO with the underlying group of ref\/lections isomorphic to $\mathbb{Z}_2^d$ (written shortly $[{\rm DHO},\mathbb{Z}_2^d]$) is sometimes called the Laguerre--Dunkl setting. The reason is that this Dunkl framework is deeply connected with the situation of Laguerre function expansions of convolution type, in particular the standard eigenfunctions of the $[{\rm DHO},\mathbb{Z}_2^d]$ are directly related to Laguerre functions of convolution type.
The results we obtain in the Laguerre--Dunkl context, see Theorem~\ref{thm:main}, are weighted $L^p$ mapping properties of several fundamental harmonic analysis operators like the heat semigroup maximal operator, mixed $g$-functions of arbitrary orders, mixed Lusin area integrals of arbitrary orders, Riesz transforms of arbitrary order, and spectral multipliers of Laplace and Laplace--Stieltjes transform type (these multiplier operators cover, as special cases, imaginary powers of the DHO and fractional integrals related to the DHO, respectively). This brings a number of new results that extend those existing in the literature by considering non-positive multiplicity functions and more general operators. This concerns, in particular, the authors' papers~\cite{NoSt4, NoSt3} where heat semigroup maximal operator, f\/irst order Riesz transforms and imaginary powers of the $[{\rm DHO},\mathbb{Z}_2^d]$ were studied and~\cite{TZS2,TZS3}, where the Laplace type multipliers of both types and f\/irst order $g$-functions and Lusin area integrals for the $[{\rm DHO},\mathbb{Z}_2^d]$ were investigated. Further, we generalize in the above mentioned directions and, in addition, by allowing weights, the results of Ben Salem and Samaali~\cite{BSa} and Nefzi~\cite{N}, where the Hilbert transform and certain higher-order Riesz transforms for the [DHO,$\mathbb{Z}^1_2$] and $[{\rm DHO},\mathbb{Z}_2^d]$, respectively, were studied. We also get a weighted extension of the results by Forzani, Sasso and Scotto~\cite{FSS} for the heat semigroup maximal operator in the context of $[{\rm DHO},\mathbb{Z}_2^d]$. It is worth mentioning that recently unweighted $L^p$-boundedness of f\/irst order Riesz transforms and imaginary powers associated with the DHO and an arbitrary f\/inite group of ref\/lections was obtained by Amri~\cite{A} and Amri and Tayari~\cite{AT}, respectively. See~\cite{BRT,BT} for more results in the general DHO setting. Our present analysis is a natural, but by no means trivial, f\/irst step towards conjecturing and proving further results for the DHO and a general ref\/lection group, possibly with non-positive multiplicity functions admitted.

Our general strategy in proving $L^p$ mapping properties of the above mentioned operators is essentially the same as in our previous papers \cite{NoSt4,NoSt3,TZS2,TZS3}. Thus, using symmetries involved, we f\/irst reduce the analysis to a number of suitably def\/ined auxiliary operators related to a smaller measure metric space, which is actually a space of homogeneous type. Then to treat these auxiliary operators we apply the general vector-valued Calder\'on--Zygmund theory for spaces of homogeneous type. Here the most dif\/f\/icult step is to show the so-called standard estimates for the integral kernels involved. To achieve that, we employ the technique of kernel estimates that was inspired by Sasso's article~\cite{Sa} and then developed in the f\/irst two authors' paper \cite{NoSt2}, and later gradually ref\/ined by the f\/irst and third authors in~\cite{NoSz, TZS1,TZS2,TZS3}. Among all these references especially~\cite{NoSz} is relevant for our purposes since the tools established there allow to cover non-positive multiplicity functions. It is worth pointing out that there exists a variant of the Calder\'on--Zygmund theory suited to the general Dunkl setting with arbitrary f\/inite group of ref\/lections, see Amri and Sif\/i \cite{A,AS2,AS1}, but as stated it does not cover the case of a non-positive multiplicity function and does not provide weighted results.

\looseness=1 Another substantial aim of this paper is connected with the f\/irst two authors' papers \cite{NoSt1,NoSt5}. The latter article proposes a symmetrization procedure in a context of general discrete orthogonal expansions related to a second order dif\/ferential operator $L$, a `Laplacian'. This procedure, combined with a unif\/ied conjugacy scheme established in \cite{NoSt1} allows one to associate, via a~suitable embedding, a~dif\/ferential-dif\/ference `Laplacian' $\mathbb{L}$ with the initially given orthogonal system of eigenfunctions of $L$ so that the resulting extended conjugacy scheme has the natural classical shape. In particular, the related `partial derivatives' decomposing $\mathbb{L}$ are formally skew-adjoint in an appropriate~$L^2$ space and they commute with Riesz transforms and conjugate Poisson integrals. Thus the symmetrization procedure overcomes the main inconvenience of the theory postulated in \cite{NoSt1}, that is the lack of symmetry in the principal objects and relations resul\-ting in essential deviations of the theory from the classical shape. The price is, however, that the `Laplacian' $\mathbb{L}$ and the associated `partial derivatives' are not dif\/ferential, but dif\/ferential-dif\/ference operators. It was shown in~\cite{NoSt5} that the symmetrization is supported by a good $L^2$ theory. However, it seems to be practically impossible to develop the $L^p$ theory on the level of generality assumed in \cite{NoSt5}. Thus it is of interest and importance to look at the problem in concrete classical settings where proper tools and techniques are either known or can be ef\/fectively elaborated. Recently Langowski \cite{L1,L2,L3} verif\/ied that, in case of one-dimensional Jacobi trigonometric polynomial and function contexts, the symmetrization leads to an extended setting admitting a~good~$L^p$ theory. In the present paper we take the opportunity to investigate another, this time multi-dimensional, concrete realization of the symmetrization procedure and f\/ind out that it admits a good $L^p$ theory as well, giving further support for the theory in~\cite{NoSt5}. More precisely, we apply the real variant of the symmetrization procedure to the multi-dimensional Laguerre function setting of convolution type. This results in the Laguerre-symmetrized setting, which turns out to be closely related to the Laguerre--Dunkl context. Consequently, fundamental harmonic analysis operators can be analyzed by means of essentially the same strategy and technical tools. The outcome of our investigation is contained in Theorem~\ref{Sthm:main}.

The aforementioned framework of Laguerre function expansions of convolution type was widely studied from harmonic analysis perspective in the last decade or two; see, for instance, the authors' papers \cite{NoSt2,NoSt6,NoSt7,NoSz,TZS1,TZS2,TZS3} and references given there. The results obtained in this paper for the Laguerre--Dunkl and the Laguerre-symmetrized situations contribute also to this line of research. This is because, roughly speaking, the Laguerre setting can be recovered either from the Laguerre--Dunkl or the Laguerre-symmetrized context via a restriction to symmetric functions. Then our present results can be projected suitably to deliver new information about $L^p$ mapping properties of interesting variants of mixed $g$-functions, mixed Lusin area integrals and Riesz transforms (all of them of arbitrary orders) that were not investigated earlier in the Laguerre setting; see Theorem~\ref{thm:lag}. One also gets enhancements of existing results like, for instance, $L^p$-boundedness of f\/irst order Lusin area integrals valid for a complete range of the associated parameter of type; see Theorem~\ref{thm:lags}.

The paper is organized as follows. In Section~\ref{sec:prel} we introduce the notation and the three settings considered, that is the Laguerre, Laguerre--Dunkl and Laguerre-symmetrized situations. In Section~\ref{sec:main} we state the main results of the paper, Theorems~\ref{thm:main} and~\ref{Sthm:main}. Here we also introduce auxiliary Laguerre-type operators in the Laguerre--Dunkl and the Laguerre-symmetrized settings, which are related to a smaller space. Then we state Theorems~\ref{thm:main+} and~\ref{Sthm:main+} which allow us to reduce the proofs of Theorems~\ref{thm:main} and~\ref{Sthm:main} to a simpler situation
involving only the auxi\-liary operators. The ultimate reduction is due to the general Calder\'on--Zygmund theory, see Theorems~\ref{thm:CZ} and~\ref{Sthm:CZ}. We f\/inish this section by stating new results pertaining to the Laguerre setting, see Theorems~\ref{thm:lag} and~\ref{thm:lags}, and indicating further results in all the contexts consi\-de\-red. The most technical part of the paper, Section~\ref{sec:ker}, is devoted to the proofs of Theorems~\ref{thm:kerest} and~\ref{Sthm:kerest} that contain standard estimates for kernels associated with the above mentioned auxiliary Laguerre-type operators and thus deliver the missing link in the proofs of Theorems~\ref{thm:CZ} and~\ref{Sthm:CZ}. The paper f\/inishes with two appendices containing, respectively, a~minor auxiliary result and a table summarizing notation of various objects in the three contexts considered in this paper.

\newpage

\section{Preliminaries} \label{sec:prel}

It this section we introduce notation used throughout the paper and the three settings investigated. Notation of main objects in these settings is summarized in Table~\ref{table1}, which is located at the end of the paper for easy reference.

\subsection{Notation}

In the whole paper $d \ge 1$ and $\a \in (-1,\infty)^d$ will denote the dimension of the underlying space and the parameter of type appearing in all the contexts considered. These quantities should be thought of as f\/ixed from now on. We let $|\a| = \a_1+\dots + \a_d$ and point out that this sum may be negative. We denote by $\mu_{\a}$ the measure in $\R^d$ given by
\begin{gather*}
d\mu_{\a}(x) = \prod_{i=1}^d |x_i|^{2\a_i+1}\, dx_i.
\end{gather*}
The restriction of $\mu_{\a}$ to $\R^d_{+}=(0,\infty)^d$ will be written as $\mu_{\a}^{+}$.

Throughout the paper we use a fairly standard notation with essentially all symbols referring to the measure metric spaces $(\R^d,\mu_{\a},\|\cdot\|)$ and $(\R^d_{+},\mu_{\a}^{+},\|\cdot\|)$, where $\|\cdot\|$ stands for the Euclidean norm. In particular, by $\langle f,g \rangle_{d\mu_{\a}^{+}}$ we mean $\int_{\R^d_{+}} f(x)\overline{g(x)}\,d\mu_{\a}^{+}(x)$ whenever the integral makes sense. By $L^p(\R^d_{+},Ud\mu_{\a}^{+}) = L^p(Ud\mu_{\a}^{+})$ we understand the weighted $L^p(d\mu_{\a}^{+})$ space, $U$ being a~non-negative weight on $\R^d_{+}$. Further, for $1 \le p < \infty$ we write $A_p^{\a,+}$ for the Muckenhoupt class of $A_p$ weights connected with the space of homogeneous type $(\R^d_{+},\mu_{\a}^{+},\|\cdot\|)$. In an analogous way we interpret $\langle f,g\rangle_{d\mu_{\a}}$, $L^p(\R^d,Wd\mu_{\a}) = L^p(Wd\mu_{\a})$ and $A_p^{\a}$. Furthermore, by $C_0$ we denote the closed separable subspace of $L^{\infty}(\R_{+},dt)$ consisting of all continuous functions on $\mathbb{R}_{+}$ which have f\/inite limits as $t\to 0^{+}$ and vanish as $t\to \infty$.

Given $x,y \in \R^d_{+}$, $\beta \in \R^d$, $r>0$ and a multi-index $m \in \N^d$, being $\N=\{0,1,2,\ldots\}$, we denote:
\begin{gather*}
\boldsymbol{0} = (0,\ldots,0) \in \N^d, \qquad \boldsymbol{1} = (1,\ldots,1) \in \N^d, \\
|m| = m_1+\dots + m_d \qquad \textrm{(length of $m$)},\\
\|x\| = \big(x_1^2+\dots +x_d^2\big)^{1/2} \qquad \textrm{(Euclidean norm)},\\
B(x,r) = \big\{y \in \R^d_{+}\colon \|y-x\|< r\} \qquad \textrm{(open balls in $\R^d_{+}$)},\\
xy = (x_1 y_1,\ldots,x_d y_d), \\
x \vee y = (\max\{x_1,y_1\},\ldots,\max\{x_d,y_d\}), \\
x \wedge y = (\min\{x_1,y_1\},\ldots,\min\{x_d,y_d\}), \\
x^{\beta} = x_1^{\beta_1} \cdots x_d^{\beta_d},\\
x \le y \equiv x_i \le y_i, \qquad i = 1,\ldots,d, \\
\lfloor x \rfloor = (\max\{n \in \Z\colon n \le x_1\},\ldots,\max\{n \in \Z\colon n \le x_d\}) \qquad\textrm{(f\/loor function)}, \\
\overline{m} = (\overline{m}_1,\ldots, \overline{m}_d), \qquad	\overline{m}_i = m_i-2\lfloor m_i/2\rfloor = \chi_{\{m_i \; \textrm{is odd}\}}, \\
\partial_{x_i} = \partial\slash \partial x_i, \qquad i =1,\ldots,d \qquad \textrm{(classical partial derivatives)},\\
\partial_x^{m} = \partial_{x_1}^{m_1} \circ \cdots \circ \partial_{x_d}^{m_d} \qquad \textrm{(classical higher-order partial derivatives)}.
\end{gather*}
Moreover, if $x,y \in \R^d$ and $\beta \in \N^d$ we understand the objects $xy$, $x^{\beta}$, $\lfloor x \rfloor$ and the relation $x \le y$ in the same way as above whenever it makes sense.

{\allowdisplaybreaks Further, for $i= 1,\ldots, d$, $x,y \in \R^d$, $s \in (-1,1)^d$ and $\z \in (0,1)$, we introduce the following notation and abbreviations:
\begin{gather*}
e_i \equiv \textrm{$i$th coordinate vector in $\R^d$},\\
\sigma_i \equiv \textrm{ref\/lection with respect to the hyperplane $\{e_i\}^{\perp}$},\\
q_{\pm} = q_{\pm}(x,y,s) = \|x\|^2+\|y\|^2\pm 2\sum_{i=1}^d x_i y _i s_i,\\
\e(\z,q_{\pm}) = \exp\left(-\frac{1}{4\z}q_{+} - \frac{\z}4 q_{-}\right),\qquad \lo(\z) = \log\frac{1+\z}{1-\z}.
\end{gather*}}

We shall also use the following terminology. Given $\eta \in \Z_2^d = \{0,1\}^d$, we say that a function $f \colon \R^d \to \C$ is $\eta$-symmetric if for each $i=1,\ldots,d$, $f$ is either even or odd with respect to the $i$th coordinate according to whether $\eta_i=0$ or $\eta_i=1$, respectively. If $f$ is {$\boldsymbol{0}$}-symmetric, then we simply say that $f$ is symmetric or, alternatively, ref\/lection invariant. Furthermore, if there exists $\eta \in \Z_2^d$ such that $f$ is $\eta$-symmetric, then we denote by $f^{+}$ its restriction to $\R^d_{+}$. Finally, $f_{\eta}$ denotes the $\eta$-symmetric component of~$f$, i.e.,
\begin{gather*}
f = \sum_{\eta\in \{0,1\}^d} f_{\eta}, \qquad f_{\eta}(x) = \frac{1}{2^d} \sum_{\eps \in \{-1,1\}^d} \eps^{\eta}f(\eps x).
\end{gather*}
Conversely, if $f\colon \R^d_{+} \mapsto \C$, then by $f^{\eta}$ we mean the $\eta$-symmetric extension of $f$ to the whole $\R^d$, namely
\begin{gather*}
f^{\eta}(x) = \begin{cases}
								\eps^{\eta} f(\eps x), & \textrm{if $x \in (\R\setminus \{0\})^d$ and
										$\eps \in \{-1,1\}^d$ is such that $\eps x \in \R^d_{+}$},\\
								0, & \textrm{if $x \notin (\R\setminus \{0\})^d$}.
							\end{cases}
\end{gather*}

When writing estimates, we will frequently use the notation $X \lesssim Y$ to indicate that $X \le C Y$ with a positive constant $C$ independent of signif\/icant quantities. We shall write $X \simeq Y$ when simultaneously $X \lesssim Y$ and $Y \lesssim X$.

All the notation introduced in this section is essentially consistent with~\cite{NoSz}.

\subsection{Laguerre setting of convolution type}

The Laguerre functions of convolution type are given by
\begin{gather*}
\ell_k^{\a}(x) = c_k^{\a} \exp\big({-}\|x\|^2/2\big) \prod_{i=1}^{d} L_{k_i}^{\a_i}\big(x_i^2\big),
	\qquad k \in \N^d,
\end{gather*}
where $c_k^{\a}> 0$ are the normalizing constants, and $L_{k_i}^{\a_i}$ are the classical one-dimensional Laguerre polynomials. The system $\{\ell_k^{\a}\colon k \in \N^d\}$ is an orthonormal basis in $L^2(d\mu_{\a}^{+})$.

The $\ell_k^{\a}$ are eigenfunctions of the Bessel harmonic oscillator
\begin{gather*}
L_{\a} = -\Delta - \sum_{i=1}^d \frac{2\a_i+1}{x_i} \frac{\partial}{\partial x_i} + \|x\|^2
\end{gather*}
acting on $\R^d_{+}$. We have $L_{\a} \ell_k^{\a} = \lambda_{|k|}^{\a} \ell_k^{\a}$, where
\begin{gather*}
\lambda_n^{\a} = 4n + 2|\a| + 2d, \qquad n \ge 0.
\end{gather*}
We denote by the same symbol $L_{\a}$ the natural self-adjoint extension in $L^2(d\mu_{\a}^{+})$ whose spectral resolution is given by the $\ell_k^{\a}$.

Partial derivatives associated with $L_{\a}$ emerge from the decomposition
\begin{gather*}
L_{\a} = \lambda_0^{\a} + \sum_{i=1}^{d} \delta_i^{*} \delta_i,
\end{gather*}
where
\begin{gather*}
\delta_i = \frac{\partial}{\partial x_i} + x_i, \qquad \delta_i^{*} = -\frac{\partial}{\partial x_i} + x_i - \frac{2\a_i+1}{x_i},
\end{gather*}
$\delta_i^{*}$ being the formal adjoint of $\delta_i$ in $L^2(d\mu_{\a}^{+})$. Note that the action of $\delta_i$ on $\ell_k^{\a}$ is much simpler comparing to $\delta_i^{*}$, in fact we have (see \cite[p.~652]{NoSt2} or \cite[p.~694]{NoSt1})
\begin{gather*} 
\delta_i \ell_{k}^{\a}(x) = -2\sqrt{k_i} x_i \ell_{k-e_i}^{\a+e_i}(x), \qquad
\delta_i^{*} \ell_{k}^{\a}(x) = 2\sqrt{k_i} x_i \ell_{k-e_i}^{\a+e_i}(x) + \left(2x_i-\frac{2\a_i+1}{x_i}\right) \ell_k^{\a}(x);
\end{gather*}
here and elsewhere we use the convention that $\ell_k^{\a}\equiv 0$ if $k \notin \N^d$. Hence it is natural to consider $\delta_i$, $i=1,\ldots,d$, as the f\/irst order partial derivatives related to $L_{\a}$. This choice is further motivated by mapping properties of fundamental harmonic analysis operators involving derivatives, like Riesz--Laguerre transforms. On the other hand, the proper choice of higher-order derivatives is a more complicated matter. Taking into account a f\/ixed $i$th axis, one can iterate $\delta_i$ or interlace it with $\delta_i^*$. In fact, both possibilities are well motivated and of interest.

The heat semigroup $T_t^\a = \exp(-tL_{\a})$, $t >0$, has an integral representation in $L^2(d\mu_{\a}^{+})$ and the heat kernel is explicitly given by
\begin{gather*}
G_t^{\a}(x,y) = \frac{1}{(\sinh 2t)^d} \exp\left(\!{-}\frac{1}2 \coth(2t) \big( \|x\|^2 + \|y\|^2\big) \!\right)\!
	\prod_{i=1}^d (x_i y_i)^{-\a_i} I_{\a_i}\left( \frac{x_i y_i}{\sinh 2t }\right),\! \!\!\qquad t >0.
\end{gather*}
Here $I_{\a_i}$ denotes the modif\/ied Bessel function of the f\/irst kind of order $\a_i$. As a function on~$\R_{+}$, $I_{\a_i}$ is smooth and strictly positive. The following representation of~$G_t^{\a}(x,y)$, which is crucial in case $\a \notin [-1/2,\infty)^d$, was derived in~\cite{NoSz}:
\begin{gather} \label{IRL}
G_t^{\a}(x,y) = \sum_{\eps \in \{0,1\}^d} C_{\a,\eps} \left( \frac{1-\z^2}{2\z}\right)^{d+|\a|+2|\eps|}	(xy)^{2\eps} \int \e(\z,q_{\pm}) \Pi_{\a+\boldsymbol{1}+\eps}(ds),
\end{gather}
where $C_{\a,\eps} = [2(\a+\boldsymbol{1})]^{\boldsymbol{1}-\eps}$, $t$ and $\z$ are related by $\z = \tanh t$ or, equivalently,
\begin{gather*}
t = t(\z) = \frac{1}2 \log \frac{1+\z}{1-\z}, \qquad \z \in (0,1),
\end{gather*}
and $\Pi_{\nu}$ is the measure on $(-1,1)^d$ given by
\begin{gather*}
\Pi_{\nu}(ds) = \frac{1}{\pi^{d/2} 2^{|\nu|}} \prod_{i=1}^d \frac{\big(1-s_i^2\big)^{\nu_i-1/2}\, ds_i}{\Gamma(\nu_i+1/2)}, \qquad \nu \in (0,\infty)^d.
\end{gather*}
Here and elsewhere we omit writing the set of integration with respect to $\Pi_{\nu}$, which is always the cube~$(-1,1)^d$. Recall that the integrated expression $\e(\z,q_{\pm})$ depends implicitly also on~$x$,~$y$ and $s$. We remark that there is a more elementary representation of $G_t^{\a}(x,y)$ in the spirit of~\eqref{IRL}, but it is restricted to $\a\in [-1/2,\infty)^d$, see~\cite{NoSt2,NoSz}.

In the Laguerre setting objects like $\ell_k^{\a}$, $L_{\a}$, $\delta_i$, $\delta_i^{*}$, are considered on $\R_{+}^d$. Nevertheless, the same def\/ining formulas extend them naturally to the whole $\R^d$. In what follows we will use these extensions with the same notation and without further mention. An analogous remark pertains to the heat kernel $G_t^{\a}(x,y)$ and the above formulas, possibly with a limiting interpretation when some coordinates of~$x$ or~$y$ vanish.

\subsection{Laguerre--Dunkl setting}

This situation corresponds to the Dunkl harmonic oscillator in $\R^d$ and the associated group of ref\/lections isomorphic to $\mathbb{Z}_2^d$. The multi-parameter $\a$ represents the so-called multiplicity function, which is non-negative if and only if $\a \in [-1/2,\infty)^d$. For $\a = -\boldsymbol{1}/2$ the setting reduces to the context of the classical harmonic oscillator in $\R^d$.

The Laguerre--Dunkl functions are def\/ined on $\R^d$ by
\begin{gather*}
h_k^{\a}(x) = (-1)^{|\lfloor{k/2}\rfloor|} 2^{-d/2} x^{\overline{k}} \ell_{\lfloor{k/2}\rfloor}^{\a+\overline{k}}(x), \qquad k \in \N^d.
\end{gather*}
In the terminology of Dunkl theory the $h_k^{\a}$ are called generalized Hermite functions. The system $\{h_k^{\a} \colon k \in \N^d\}$ is an orthonormal basis in~$L^2(d\mu_{\a})$. We mention that $h_k^\a$ is $\eta$-symmetric if and only if $\eta = \overline{k}$. The associated Laguerre--Dunkl Laplacian is the dif\/ferential-dif\/ference operator given by
\begin{gather*}
\mathfrak{L}_{\a}f(x) = L_{\a}f(x) + \sum_{i=1}^d (\a_i+1/2) \frac{f(x)-f(\sigma_i x)}{x_i^2}.
\end{gather*}
We have $\mathfrak{L}_{\a} h_k^{\a} = \lambda_{|k|/2}^{\a} h_k^{\a}$. The natural in this context self-adjoint extension of $\mathfrak{L}_{\a}$ in $L^2(d\mu_{\a})$ will be denoted by the same symbol.

We have the symmetric decomposition
\begin{gather*}
\mathfrak{L}_{\a} = \frac{1}{2} \sum_{i=1}^d \big( \mathfrak{D}_i^{*}\mathfrak{D}_i
	+ \mathfrak{D}_i \mathfrak{D}_i^{*}\big),
\end{gather*}
where
\begin{gather*}
\mathfrak{D}_i = T_i^{\a} + x_i, \qquad \mathfrak{D}_i^{*} = -T_i^{\a} + x_i,
\end{gather*}
are the mutual formal adjoints in $L^2(d\mu_{\a})$, being
\begin{gather*}
T_i^{\a}f(x) = \frac{\partial}{\partial x_i} f(x) + (\a_i+1/2) \frac{f(x)-f(\sigma_i x)}{x_i}
\end{gather*}
(in the Dunkl theory $T_i^{\a}$, $i=1,\ldots,d$, are called Dunkl operators). For symmetry reasons, both~$\mathfrak{D}_i$ and~$\mathfrak{D}_i^{*}$ are the natural f\/irst order partial derivatives associated with $\mathfrak{L}_{\a}$. Their action on~$h_k^{\a}$ is (see \cite[p.~546]{NoSt3})
\begin{gather} \label{de_LD}
\mathfrak{D}_i h_k^{\a} = m(k_i,\a_i) h_{k-e_i}^{\a}, \qquad \mathfrak{D}_i^{*} h_k^{\a} = m(k_i+1,\a_i) h_{k+e_i}^{\a},
\end{gather}
where $m(k_i,\a_i) = \sqrt{2k_i + 2\overline{k_i}(2\a_i+1)}$; here and in other places we use the convention that $h_k^{\a} \equiv 0$ when $k \notin \N^d$.

Higher-order derivatives associated with $\mathfrak{L}_{\a}$ are formed by arbitrary f\/inite compositions of $\mathfrak{D}_i$ and $\mathfrak{D}_i^{*}$, $i = 1,\ldots,d$. For a given $n \in \N^d$ and a block multi-index $\om = (\om^1, \ldots, \om^d) \in \{-1,1\}^{n_1} \times \cdots \times \{-1,1\}^{n_d} = \{-1,1\}^{|n|}$ we denote
\begin{gather*}
\mathfrak{D}^{n,\om} = \mathfrak{D}_{d,n_d,\om^{d}} \circ \cdots \circ\mathfrak{D}_{1,n_1,\om^{1}},
\end{gather*}
where, for $i=1,\ldots,d$,
\begin{gather*}
\mathfrak{D}_{i,n_i,\om^i}
	= \big(\om_{n_i}^i T_i^{\a}+x_i\big) \circ \cdots \circ \big(\om_1^i T_i^{\a}+x_i\big).
\end{gather*}
By convention, $\mathfrak{D}_{i,0,\om^i} = \Id$. The action of $\mathfrak{D}^{n,\om}$ on $h_k^{\a}$ can be exactly described by means of~\eqref{de_LD},
but we will not need this. For our purposes we only need to notice that
\begin{gather} \label{A}
\mathfrak{D}^{n,\om} h_k^{\a} = \tau_{\om}^{\a}(k) h^{\a}_{k-\sum_{i=1}^d |\om^i|e_i},
\end{gather}
where $|\om^i| = \om_1^i + \dots + \om_{n_i}^i$ and for a f\/ixed $n \in \N^d$ the coef\/f\/icients satisfy
\begin{gather} \label{B}
0 \le \tau_{\om}^{\a}(k) \lesssim (|k|+1)^{|n|/2} \simeq \big(\lambda_{|k|/2}^{\a}\big)^{|n|/2}, \qquad k \in \N^d, \qquad \om \in \{-1,1\}^{|n|}.
\end{gather}
It is worth pointing out that $\tau_{\om}^{\a}(k)$ vanishes if and only if there exist $1\le i \le d$ and $1 \le j \le n_i$ such that $k_i-(\om_1^i+\dots +\om_j^i) < 0$. Finally, observe that if $f$ is $\eta$-symmetric then $\mathfrak{D}^{n,\om}f$ is $(\overline{\eta+n})$-symmetric. In particular, $\mathfrak{D}^{n,\om}h_k^{\a}$ is $(\overline{k+n})$-symmetric.

The Laguerre--Dunkl heat semigroup $\mathfrak{T}_t^\a = \exp(-t\mathfrak{L_{\a}})$, $t >0$, has an integral representation in $L^2(d\mu_{\a})$, and the integral kernel can be represented as, see, e.g., \cite[equation~(3)]{NoSt7} for the one-dimensional case,
\begin{gather} \label{GD}
\mathfrak{G}_t^{\a}(x,y) = \frac{1}{2^d} \sum_{\eta \in \{0,1\}^d} (xy)^{\eta} G_t^{\a+\eta}(x,y), \qquad t>0.
\end{gather}
This and \eqref{IRL} leads to the important representation
\begin{gather*} 
\mathfrak{G}_t^{\a}(x,y) = \frac{1}{2^d} \sum_{\eps,\eta \in \{0,1\}^d} C_{\a+\eta,\eps}
	\left( \frac{1-\z^2}{2\z}\right)^{d+|\a|+|\eta|+2|\eps|}(xy)^{\eta+2\eps} \int \e(\z,q_{\pm}) \Pi_{\a+\eta+\boldsymbol{1}+\eps}(ds).
\end{gather*}
Note that the sum in \eqref{GD} contains some cancellations since all the terms except one may take negative values when certain coordinates of $x$ and $y$ have opposite signs. Nevertheless, the ker\-nel~$\mathfrak{G}_t^{\a}(x,y)$ is strictly positive when $\a \in [-1/2,\infty)^d$. On the other hand, it may
be shown, see~\cite{NoSt7}, that the kernel takes both positive and negative values if~$\a$ does not satisfy the latter condition. Observe the correlation between positivities of the heat kernel and the multiplicity function.

\subsection{Laguerre-symmetrized setting}

This framework arises by applying the real variant of the symmetrization procedure proposed in~\cite{NoSt5} to the situation of Laguerre function expansions of convolution type, see \cite[Example~5.2]{NoSt5}.

The symmetrized system $\{\Phi_k^{\a}\colon k \in \N^d\}$ is an orthonormal basis in $L^2(d\mu_{\a})$. The $\Phi_k^{\a}$ coincide with $h_k^{\a}$ up to signs (in particular, $\Phi_k^{\a}$ is $\eta$-symmetric if and only if $\eta = \overline{k}$). More precisely,
\begin{gather*}
\Phi_k^{\a}(x) = (-1)^{|\lfloor k/2\rfloor|} h_k^{\a}(x) = 2^{-d/2} x^{\overline{k}} \ell_{\lfloor k/2 \rfloor}^{\a+\overline{k}}(x), \qquad k \in \N^d.
\end{gather*}
However, the Laguerre-symmetrized Laplacian $\mathbb{L}_{\a}$ dif\/fers slightly from the Laguerre--Dunkl Lap\-la\-cian. We have
\begin{gather*}
\mathbb{L}_{\a}f(x) = \mathfrak{L}_{\a}f(x) + \sum_{i=1}^d \big[ f(x)-f(\sigma_i x)\big].
\end{gather*}
Consequently, $\mathbb{L}_{\a}\Phi_k^{\a} = \lambda_{|\lfloor (k+\boldsymbol{1})/2\rfloor|}^{\a} \Phi_k^{\a}$. Accordingly, we consider the natural in this context self-adjoint extension of $\mathbb{L}_{\a}$ in $L^2(d\mu_{\a})$.

Partial derivatives corresponding to $\mathbb{L}_{\a}$ are def\/ined by
\begin{gather*}
\mathbb{D}_if(x) = \frac{\partial}{\partial x_i} f(x) + x_i f(\sigma_i x) +	\frac{\a_i+1/2}{x_i} \big[ f(x) - f(\sigma_i x)\big].
\end{gather*}
This is motivated by the following two facts. Each $\mathbb{D}_i$ is formally skew-adjoint in $L^2(d\mu_{\a})$ and~$\mathbb{L}_{\a}$ decomposes simply as
\begin{gather*}
\mathbb{L}_{\a} = \lambda_{0}^{\a} - \sum_{i=1}^d \mathbb{D}_i^2.
\end{gather*}
Observe that there is no ambiguity in def\/ining higher-order derivatives in this context. Note that (see \cite[Lemma~4.2]{NoSt5})
\begin{gather} \label{de_LS}
\mathbb{D}_i \Phi_k^{\a} = (-1)^{k_i+1}2\sqrt{\lfloor (k_i+1)/2 \rfloor}\, \Phi^{\a}_{k-(-1)^{k_i}e_i}
\end{gather}
with the convention that $\Phi_k^{\a} \equiv 0$ if $k \notin \N^d$. Iteration of~\eqref{de_LS} easily leads to the description of the action of $\mathbb{D}^n = \mathbb{D}^{n_d}_d \circ \cdots \circ \mathbb{D}^{n_1}_1 $ on $\Phi_k^{\a}$, see \cite[Corollary~4.3]{NoSt5},
\begin{gather} \label{C}
\mathbb{D}^n \Phi_k^{\a} = \rho_n(k) \Phi^{\a}_{k-(-\boldsymbol{1})^k \overline{n}},
\end{gather}
where the coef\/f\/icients satisfy
\begin{gather} \label{D}
|\rho_n(k)| \lesssim (|k|+1)^{|n|/2} \simeq \big( \lambda_{|\lfloor (k+\boldsymbol{1})/2\rfloor|}^{\a}\big)^{|n|/2}, \qquad k \in \N^d.
\end{gather}
Notice that $\rho_n(k)$ vanishes if and only if there exist $1\le i \le d$ such that $k_i=0$ and $n_i>0$. Further, it is easy to see that if $f$ is $\eta$-symmetric, then $\mathbb{D}^{n} f$ is $(\overline{\eta + n})$-symmetric. In particular, $\mathbb{D}^{n} \Phi_k^{\a}$ is $(\overline{k + n})$-symmetric.

The Laguerre-symmetrized heat semigroup $\mathbb{T}_t^\a = \exp(-t\mathbb{L_{\a}})$, $t >0$, is expressed in $L^2(d\mu_{\a})$ by integrating against the kernel
\begin{gather*}
\mathbb{G}_t^{\a}(x,y) = \frac{1}{2^d} \sum_{\eta \in \{0,1\}^d} \exp\big({-}2|\eta|t\big) (xy)^{\eta} 	G_t^{\a+\eta}(x,y), \qquad t >0.
\end{gather*}

It should be pointed out that the Laguerre-symmetrized and the Laguerre--Dunkl settings reduce essentially to the Laguerre convolution setting after restriction to symmetric (ref\/lection invariant) functions.

\section{Main results} \label{sec:main}

In this section we present the main results of the paper. Of prior importance are Sections~\ref{ssec:L-D} and~\ref{ssec:L-s} covering the Laguerre--Dunkl and the Laguerre-symmetrized settings, respectively. As it was already mentioned, the general strategy of proving~$L^p$ mapping properties of operators considered in both settings relies on reducing the analysis to a number of suitably def\/ined auxiliary operators related to a smaller measure metric space. This strategy has its roots in the papers \cite{NoSt4, NoSt3} and was then successfully applied in \cite{CaSz,L1,L3,N,TZS2,TZS3}. Accordingly, the proofs of the two main
results, Theorems~\ref{thm:main} and~\ref{Sthm:main}, are reduced to two auxiliary results, Theorems~\ref{thm:main+} and~\ref{Sthm:main+}. The proofs of the two latter results fall under the well-known scope of the general (vector-valued) Calder\'on--Zygmund theory for spaces of homogeneous type. Analogous approach based on the Calder\'on--Zygmund theory in similar situations can be found in \cite{BCN,CaSz,L1,L3,N,NoSt2,NoSt4,NoSt3,NoSz,StTo2,StTo1,TZS1,TZS2,TZS3}, among many others. For the reader's convenience we recall the main ingredients of this theory in what follows.

Some complementary results are contained in Sections~\ref{ssec:lag} and~\ref{ssec:fur}. More precisely, in Section~\ref{ssec:lag} we state some new results concerning Riesz transforms and square functions in the Laguerre setting. Most of them, see Theorem~\ref{thm:lag}, are consequences of the Laguerre--Dunkl and Laguerre-symmetrized results, but some not, cf.\ Theorem~\ref{thm:lags}. Finally, in Section~\ref{ssec:fur} we comment on further results in all the three frameworks considered that can be obtained by means of the techniques presented in this paper. This concerns, in particular, the Poisson semigroup based operators such as the maximal operator, Laplace multipliers, $g$-functions and Lusin area integrals; see Theorem~\ref{thm:mainP}.

\subsection{Laguerre--Dunkl setting} \label{ssec:L-D}

The main objects of our study in this subsection are the following operators, which are def\/ined initially in~$L^2(d\mu_{\a})$.
\begin{description}\itemsep=0pt
\item[{\rm (L-D.I)}] The Laguerre--Dunkl heat semigroup maximal operator
\begin{gather*}
\mathfrak{T}_{*}^{\alpha}f = \big\| \mathfrak{T}_t^{\alpha}f\big\|_{L^{\infty}(\R_+,dt)}.
\end{gather*}
\item[{\rm (L-D.II)}] Riesz--Laguerre--Dunkl transforms of order $|n|>0$ and type $\om$
\begin{gather*}
\mathfrak{R}_{n,\om}^{\alpha}f = \sum_{k \in \N^d}
\big( \lambda_{|k|/2}^{\a} \big)^{-|n|\slash 2}
	\langle f , h_k^{\a} \rangle_{d\mu_{\alpha}}\, \mathfrak{D}^{n,\om} h_k^{\a},
\end{gather*}
where $n \in \N^d \setminus \{\boldsymbol{0}\}$ and $\om \in \{-1,1\}^{|n|}$.
\item[{\rm (L-D.III)}] Multipliers of Laplace and Laplace--Stieltjes transform types
\begin{gather*}
\mathfrak{M}^{\alpha}_{\mathfrak{m}} f = \sum_{k \in \N^d} \mathfrak{m}( \lambda_{|k|/2}^{\a} )
		\langle f , h_k^{\a} \rangle_{d\mu_{\alpha}} h_k^{\a},
\end{gather*}
where either $\mathfrak{m}(z) = z\int_0^{\infty} e^{-tz} \psi(t)\, dt$ with $\psi \in L^{\infty}(\R_+,dt)$ or $\mathfrak{m}(z) = \int_{\R_+} e^{-tz} \, d\nu (t)$ with~$\nu$ being a signed or complex Borel measure on $\R_+$, with its total variation $|\nu|$ satisfying
\begin{gather}\label{assum}
\int_{\R_+} e^{-t \lambda_{0}^{\a} } \, d|\nu|(t) < \infty.
\end{gather}
\item[{\rm (L-D.IV)}] Littlewood--Paley--Stein type mixed $g$-functions
\begin{gather*}
\mathfrak{g}^{\alpha}_{n,m,\om}(f) = \big\| \partial_t^m \mathfrak{D}^{n,\om}
	\mathfrak{T}_t^{\alpha}f \big\|_{L^2(\R_+,t^{|n|+2m-1}dt)},
\end{gather*}
where $n \in \N^d$, $m \in \N$ are such that $|n|+m>0$, and $\om \in \{-1,1\}^{|n|}$.
\item[{\rm (L-D.V)}] Mixed Lusin area integrals
\begin{gather*}
\mathfrak{S}^{\a}_{n,m,\om}(f)(x)= \left( \int_{A(x)} t^{|n| + 2m -1} \big|\partial_{t}^m \mathfrak{D}^{n,\om}
	\mathfrak{T}_t^{\alpha}f (z)\big|^{2}\frac{d\mu_{\a}(z) \, dt}{V_{\sqrt{t}}^{\a}(x)} \right)^{1\slash 2},
\end{gather*}
where $n \in \N^d$, $m \in \N$ are such that $|n|+m>0$, and $\om \in \{-1,1\}^{|n|}$. Further, here $A(x)$ is the parabolic cone with vertex at $x$,
\begin{gather}\label{def:A}
A(x)=(x,0)+A,\qquad A=\big\{(z,t)\in \RR \times \R_+ \colon |z|<\sqrt{t}\big\}
\end{gather}
(note that the exact aperture of this cone is meaningless for our developments) and $V_{t}^{\a}(x)$ is the~$\mu_\a$ measure of the cube centered at~$x$ and of side lengths~$2t$. More precisely,
\begin{gather}\label{def:V}
V_{t}^{\a}(x)=\prod_{i=1}^{d}V_{t}^{\a_{i}}(x_i),\qquad V_{t}^{\a_i}(x_i)=\mu_{\a_i}\big((x_{i}-t,x_{i}+t)\big),\qquad x\in\RR,\quad t>0.
\end{gather}
\end{description}
The series def\/ining $\mathfrak{R}_{n,\om}^{\alpha}$ and $\mathfrak{M}^{\alpha}_{\mathfrak{m}}$ converge in $L^2(d\mu_\a)$ and produce $L^2(d\mu_\a)$-bounded ope\-ra\-tors. This is obvious in case of $\mathfrak{M}^{\alpha}_{\mathfrak{m}}$ since the values of $\mathfrak{m}$ stay bounded. For $\mathfrak{R}_{n,\om}^{\alpha}$ it follows from \eqref{A} and \eqref{B}, via Parseval's identity. Moreover, the formulas def\/ining $\mathfrak{T}^{\alpha}_{*}f$, $\mathfrak{g}^{\alpha}_{n,m,\om}(f)$ and $\mathfrak{S}^{\a}_{n,m,\om}(f)$, understood in a pointwise sense, are valid (the series/integral def\/ining $\mathfrak{T}_t^{\alpha}f(x)$ converges and produces a smooth function of $(x,t)\in \RR \times\mathbb{R}_{+}$) for $f \in L^{p}(Wd\mu_\a)$, $W\in A_{p}^{\a}$, $1 \le p<\infty$; see Proposition~\ref{pro:conv} in Appendix~I.

Our main result in the Laguerre--Dunkl situation reads as follows.
\begin{Theorem}\label{thm:main}\sloppy
Assume that $\a \in(-1,\infty)^d$ and $W$ is a weight in $\RR$ invariant under the reflec\-tions $\sigma_1, \ldots ,\sigma_d$. Then the Riesz--Laguerre--Dunkl transforms {\rm (L-D.II)} and the multi\-pliers of Laplace and Lap\-lace--Stieltjes transform types {\rm (L-D.III)} extend to bounded linear ope\-ra\-tors on $L^{p}(Wd\mu_\a)$, $W\in A_{p}^{\a}$, $1<p<\infty$, and from $L^1 (Wd\mu_\a)$ to weak $L^1 (Wd\mu_\a)$, $W\in A_{1}^{\a}$. Further\-more, the Laguerre--Dunkl heat semigroup maximal operator {\rm (L-D.I)}, the mixed $g$-functions \mbox{{\rm (L-D.IV)}} and the mixed Lusin area integrals {\rm (L-D.V)} are bounded sublinear operators on $L^{p}(Wd\mu_\a)$, $W\in A_{p}^{\a}$, $1<p<\infty$, and from $L^1 (Wd\mu_\a)$ to weak $L^1 (Wd\mu_\a)$, $W\in A_{1}^{\a}$.
\end{Theorem}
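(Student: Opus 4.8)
The plan is to follow the reduction scheme announced in the introduction, descending from $(\RR,\mu_\a)$ to the space of homogeneous type $(\RRR,\mu_\a^+,\|\cdot\|)$ by exploiting the $\Z_2^d$-symmetry, and then invoking the vector-valued Calder\'on--Zygmund theory. First I would decompose any $f$ into its $\eta$-symmetric components $f=\sum_{\eta\in\{0,1\}^d} f_\eta$. Since $h_k^\a$ is $\eta$-symmetric exactly when $\eta=\overline{k}$, and each operator in (L-D.I)--(L-D.V) is built from $\mathfrak{T}_t^\a$ (which preserves symmetry type) and from the derivatives $\mathfrak{D}^{n,\om}$ (which, by the remark following \eqref{A}, send an $\eta$-symmetric function to an $(\overline{\eta+n})$-symmetric one), each operator respects this decomposition. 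Restricting an $\eta$-symmetric function to $\RRR$ via $f\mapsto f^+$ and using the factorization $h_k^{\a}(x)=(-1)^{|\lfloor k/2\rfloor|}2^{-d/2}x^{\overline{k}}\ell_{\lfloor k/2\rfloor}^{\a+\overline{k}}(x)$, which links generalized Hermite functions to Laguerre functions of convolution type, turns each operator into one of finitely many auxiliary Laguerre-type operators on $L^p(U\,d\mu_\a^+)$, where $U=W|_{\RRR}$. Because $d\mu_\a$ is reflection invariant and $W$ is reflection invariant by hypothesis, $\|f_\eta\|_{L^p(W d\mu_\a)}^p=2^d\|f_\eta^+\|_{L^p(U d\mu_\a^+)}^p$, and a reflection-invariant $A_p^\a$ weight restricts to an $A_p^{\a,+}$ weight; summing over the $2^d$ symmetry types (and using the triangle inequality for the sublinear operators) shows that weighted $L^p$ and weak type $(1,1)$ bounds for all auxiliary operators on $\RRR$ yield the assertion on $\RR$. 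This reduction is precisely the content of Theorem~\ref{thm:main+}.

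Next I would treat each auxiliary family through the (vector-valued) Calder\'on--Zygmund theorem for $(\RRR,\mu_\a^+,\|\cdot\|)$ recalled in Theorem~\ref{thm:CZ}. The operators are realized with values in suitable Banach spaces $B$: the maximal operator in $C_0$, the mixed $g$-functions in $L^2(\R_+,t^{|n|+2m-1}dt)$, the area integrals in the mixed-norm space over the cone $A(x)$, while the Riesz transforms and the multipliers are scalar. For each family one must verify the two inputs of the machinery. The first is $L^2(d\mu_\a^+)$-boundedness: for $\mathfrak{R}^\a_{n,\om}$ it follows from Parseval together with the coefficient bound \eqref{B}; for $\mathfrak{M}^\a_{\mathfrak m}$ from boundedness of $\mathfrak m$; for the $g$-functions it reduces, after expanding in the basis and using $\partial_t^m\mathfrak{T}_t^\a h_k^\a=(-\lambda_{|k|/2}^\a)^m e^{-t\lambda_{|k|/2}^\a}h_k^\a$ and \eqref{A}, to the elementary integral $\int_0^\infty e^{-2t\lambda}t^{|n|+2m-1}dt$ controlled via \eqref{B}, and the area-integral case reduces to this by a Fubini argument in which $V_{\sqrt t}^\a(x)$ balances the integration in $z$; the maximal operator is then dominated in $L^2$ by a square function. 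The second input is the standard growth and smoothness estimates for the $B$-valued kernels on $\RRR\times\RRR\setminus\{x=y\}$, namely $\|K(x,y)\|_B\lesssim 1/\mu_\a^+(B(x,\|x-y\|))$ and the matching gradient-type bounds in $x$ and in $y$.

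The \emph{main obstacle}, and the technical heart of the whole argument, is establishing these standard kernel estimates, which is the content of Theorem~\ref{thm:kerest}. The starting point is the representation \eqref{IRL} of the Laguerre heat kernel as a superposition of Gaussians $\e(\z,q_\pm)$ against the measures $\Pi_{\a+\boldsymbol{1}+\eps}(ds)$, fed through \eqref{GD} into the Laguerre--Dunkl kernel. Differentiating in $t$ (equivalently in $\z=\tanh t$) and applying $\mathfrak{D}^{n,\om}$, each kernel becomes a finite sum of integrals of the shape $\int \z^{a}(1-\z^2)^{b}(xy)^{c}\,\e(\z,q_\pm)\,\Pi_\nu(ds)$ times explicit algebraic factors, and the problem reduces to bounding these, and their $x$- and $y$-derivatives, after taking the $B$-norm in $t$ (or $\z$). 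One must control the mass of $\Pi_\nu$ near the boundary of the cube $(-1,1)^d$ --- delicate exactly when coordinates of $\a$ are negative, so that the exponents $\nu_i-1/2$ approach $-1/2$ --- while carefully tracking the interplay between the Gaussian decay of $\e(\z,q_\pm)$ and the polynomial factors near the diagonal singularity $x=y$; the area integral adds the further step of integrating these bounds over $A(x)$ against $d\mu_\a(z)\,dt/V_{\sqrt{t}}^\a(x)$. This is where admitting non-positive multiplicity functions genuinely complicates matters and forces the use of the refined representation \eqref{IRL} and the techniques of \cite{NoSz}. Once Theorem~\ref{thm:kerest} supplies these estimates uniformly in the $B$-norm, the Calder\'on--Zygmund theorem applies, and the reduction of the first paragraph transports the conclusion back to $(\RR,\mu_\a)$.
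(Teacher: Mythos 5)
Your proposal is correct and follows essentially the same route as the paper: reduction to the auxiliary Laguerre-type operators on $(\RRR,\mu_{\a}^{+},\|\cdot\|)$ via the $\eta$-symmetric decomposition (Theorem~\ref{thm:main+}), then the vector-valued Calder\'on--Zygmund theory for that space of homogeneous type, with the $L^2$-boundedness arguments and the standard kernel estimates of Theorem~\ref{thm:kerest} obtained from the representation \eqref{IRL} and the techniques of \cite{NoSz}. The only ingredient you pass over is the (routine but formally required) verification that each operator is actually associated with its candidate kernel in the Calder\'on--Zygmund sense, which the paper isolates as Proposition~\ref{pro:kerassoc}.
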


Notice that for symmetric weights the condition $W\in A_{p}^{\a}$ is equivalent to saying that \mbox{$W^{+}\in A_{p}^{\a,+}$} (recall that $W^+$ is the restriction of $W$ to $\RRR$). We now relate in detail Theorem~\ref{thm:main} to earlier results pertaining to the operators (L-D.I)--(L-D.V).

The Laguerre--Dunkl maximal operator (L-D.I) was considered in \cite{NoSt3} by the f\/irst two \mbox{authors} and claimed to be bounded on $L^p(W d\mu_{\a})$, $W \in A_p^{\a}$, $1<p<\infty$, and from $L^1(W d\mu_{\a})$ to weak $L^1(W d\mu_{\a})$, $W \in A_1^{\a}$, provided that $\a \in [-1/2,\infty)^d$; see \cite[Theorem~3.1]{NoSt3}. Unfortunately, the argument justifying this in \cite[p.~545]{NoSt3} works only for symmetric weights. Thus in \cite[Theorem~3.1]{NoSt3} it should be assumed that $W$ is ref\/lection invariant. An unweighted version of this result, but with all $\a \in (-1,\infty)^d$ admitted, is due to Forzani, Sasso and Scotto \cite[Theorem~1.9(a)]{FSS}. Theorem~\ref{thm:main} delivers new weighted results in case $\a \in (-1,\infty)^d\setminus [-1/2,\infty)^d$. We remark that the part of Theorem~\ref{thm:main} related to (L-D.I) could be obtained in a more elementary way, essentially by the above mentioned arguments from~\cite{NoSt3}, where instead of Soni's inequality one proceeds with the aid of standard Bessel function asymptotics getting the bound (cf.\ \cite[p.~545, line~9]{NoSt3})
\begin{gather*}
| \mathfrak{G}_t^{\a}(x,y) | \lesssim G_t^{\a}(x,y), \qquad x,y \in \RR, \qquad t > 0,
\end{gather*}
for all $\a \in (-1,\infty)^d$ and then appealing to \cite[Corollary~4.2]{NoSz} rather than \cite[Theorem~2.1]{NoSt2}, since the former covers all $\a$ as above. Nevertheless, the proof of Theorem~\ref{thm:main} shows, roughly speaking, that $\mathfrak{T}^{\a}_{*}$ can be ``decomposed'' into Calder\'on--Zygmund operators, which potentially provides an approach to investigating more subtle mapping properties of $\mathfrak{T}^{\a}_{*}$ involving, for instance, Hardy and BMO spaces.

The Riesz--Laguerre--Dunkl transforms of order $1$ def\/ined by means of $\mathfrak{D}_i$ (whereas their counterparts based on $\mathfrak{D}_i^{*}$ were not taken into account) were studied by the f\/irst two authors in~\cite{NoSt3} under the restriction $\a \in [-1/2,\infty)^d$ and claimed there to be bounded on $L^p(W d\mu_{\a})$, $W \in A_p^{\a}$, $1<p<\infty$, and from $L^1(W d\mu_{\a})$ to weak $L^1(W d\mu_{\a})$, $W \in A_1^{\a}$; see \cite[Theorem~4.3]{NoSt3}. Unfortunately, the arguments given in~\cite{NoSt3} prove this only for ref\/lection invariant weights $W$. We take this opportunity to point out the corrections needed in \cite[Section~4]{NoSt3}. The weak association from \cite[Proposition~4.1]{NoSt3} should be stated and proved for the operators $R_j^{\a,\varepsilon}$ def\/ined by the component kernels \cite[equation~(5.1)]{NoSt3}. The standard estimates stated in \cite[Theorem~4.2]{NoSt3} should be replaced by analogous estimates for the component kernels just mentioned, see \cite[Lemma~5.1]{NoSt3}. Finally, \cite[Theorem~4.3]{NoSt3} should say that the operators $R_j^{\a,\varepsilon}$ (instead of~$R_j^{\a}$) are Calder\'on--Zygmund and it should be assumed that the weights~$W$ are ref\/lection inva\-riant.
Coming back to our bibliographical account, we next mention a paper by Ben Salem and Samaali~\cite{BSa} where, in dimension~$1$, Riesz--Laguerre--Dunkl (or rather Hilbert--Laguerre--Dunkl) transforms of order~$1$, based both on $\mathfrak{D}_1$ and $\mathfrak{D}_1^{*}$, were studied with the restriction $\a \ge -1/2$. For such operators unweighted $L^p$ boundedness was obtained in \cite[Theorem~5]{BSa}. The authors also claim principal value integral representations for the Hilbert--Laguerre--Dunkl transforms in \cite[Theorem~4]{BSa}, but the corresponding reasoning does not appear to us to be correct. More recently, assuming $\a \in [-1/2,\infty)^d$, Nefzi~\cite{N} considered higher-order Riesz--Laguerre--Dunkl transforms def\/ined either via~$\mathfrak{D}_i$ or via $\mathfrak{D}_i^*$ (but not both of them simultaneously), with at most~$1$ dif\/ferentiation in each coordinate direction allowed (this, in particular,
excludes orders higher than the dimension). The main result of~\cite{N} is parallel to that from~\cite{NoSt3} and, unfortunately, it inherits the error discussed above. Thus the result itself and the arguments justifying it should be corrected according to the above given indications. Having this background, we see that Theorem~\ref{thm:main} generalizes the earlier results concerning the Riesz--Laguerre--Dunkl transforms in several directions: by admitting all $\a \in (-1,\infty)^d$, all orders of the transforms, and more general form of derivatives entering the def\/inition, which are arbitrary compositions of~$\mathfrak{D}_i$ and~$\mathfrak{D}_i^*$. It is interesting to observe that with the trivial choice of the multiplicity function (the case $\alpha=-\boldsymbol{1}/2$) Theorem~\ref{thm:main} brings new results even for the higher-order Riesz--Hermite transforms investigated by Harboure, de Rosa, Segovia and Torrea~\cite{HRST}, and the second-named author and Torrea~\cite{StTo1}. More precisely, in~\cite{StTo1} the authors prove the mapping properties from Theorem~\ref{thm:main} for the Riesz--Hermite transforms without imposing the symmetry of weights, but with less general derivatives def\/ining these operators (composition of~$\mathfrak{D}_i$ and $\mathfrak{D}_j^{*}$ is not allowed, even if their action is related to dif\/ferent coordinate directions).
On the other hand, in~\cite{HRST} the derivatives are as general as in our present result, but neither weights are allowed nor the case $p=1$ is treated there (the main objective of~\cite{HRST} are dimension free~$L^p$ estimates). Finally, we note that recently Riesz transforms associated with the DHO and an arbitrary group of ref\/lections were studied by Amri~\cite{A} and Boggarapu and Thangavelu~\cite{BT}, in both cases with only non-negative multiplicity functions admitted. More precisely, in~\cite{A} unweighted $L^p$-boundedness, $1< p < \infty$, and weak type $(1,1)$ for Riesz--Dunkl transforms of order~$1$ (def\/ined by means of counterparts of~$\mathfrak{D}_i$, but not~$\mathfrak{D}_i^*$) were obtained. In \cite{BT} the authors prove mixed norm estimates (weighted $L^{p,2}$-boundedness, $1<p<\infty$) for Riesz--Dunkl transforms of order $1$ def\/ined via the counterparts of both $\mathfrak{D}_i$ and $\mathfrak{D}_i^{*}$. Our Theorem~\ref{thm:main} suggests that the results of~\cite{A,BT} can be substantially generalized.

A particular instance of the Laplace transform type multipliers (L-D.III), imaginary powers of the DHO, was investigated in \cite{NoSt4} by the f\/irst two authors. It was shown that these operators are bounded on $L^p(W d\mu_{\a})$, $W\in A_p^{\a}$, $1<p<\infty$, and from $L^1(W d\mu_{\a})$ to weak $L^1(W d\mu_{\a})$, $W \in A_1^{\a}$, where $W$ are ref\/lection invariant weights and $\a \in [-1/2,\infty)^d$; see \cite[Theorem~1 and Remark 1]{NoSt4}. This result was later generalized by the third-named author to arbitrary Laplace and also Laplace--Stieltjes transform type multipliers \cite[Theorem~2.2]{TZS2}, under the assumption $\a \in [-1/2,\infty)^d$. Theorem~\ref{thm:main} removes this restriction by admitting all $\a \in (-1,\infty)^d$. It is worth mentioning that the Laplace--Stieltjes transform type multipliers~(L-D.III) cover, as special cases, potential operators associated with $[{\rm DHO},\Z_2^d]$. The latter operators were studied by the f\/irst two authors in~\cite{NoSt6,NoSt7}. In \cite{NoSt6} the emphasis was put on two weight $L^p-L^q$ inequalities with radial power weights involved, under the assumption $\a \in [-1/2,\infty)^d$. Nevertheless, in case of $p=q>1$ and equal weights, Theorem~\ref{thm:main} implies further results for the potential operators by covering $\a \notin [-1/2,\infty)^d$ and by allowing weights not admitted in~\cite{NoSt6}. On the other hand, the aim of~\cite{NoSt7} was to obtain sharp unweighted $L^p-L^q$ estimates for the potential operators in dimension~$1$, but with all $\a>-1$ admitted. Again, Theorem~\ref{thm:main} brings here some new results in the case $p=q>1$ by allowing weights. Recently Wr\'obel \cite{W} proved a Marcinkiewicz type multivariate spectral multiplier theorem in the Laguerre--Dunkl context. Assuming $\a \in [-1/2,\infty)^d$ he infers that multipliers much more general than (L-D.III) are $L^p$-bounded, $1<p<\infty$, with no weights admitted; see \cite[Theorem~4.1]{W}. Finally, also recently, imaginary powers of the DHO related to an arbitrary group of ref\/lections were investigated by Amri and Tayari~\cite{AT}. They proved that for non-negative multiplicity functions the imaginary powers are bounded on~$L^p$, $1<p<\infty$, and from $L^1$ to weak $L^1$ (with no weights allowed). Clearly, the part of Theorem~\ref{thm:main} related to (L-D.III) indicates a natural generalization of this result.

Passing to the Littlewood--Paley--Stein type $g$-functions (L-D.IV), essentially we can only invoke the third author's paper~\cite{TZS3}, where the f\/irst order $g$-functions were considered and the special case of Theorem~\ref{thm:main} related to $\mathfrak{g}_{n,m,\om}^{\a}$, determined by the restrictions $|n|+m=1$ and $\a \in [-1/2,\infty)^d$, was obtained; see \cite[Theorem~2.1]{TZS3}. The special case $\a = -\boldsymbol{1}/2$ (trivial multiplicity function) deserves additional comments in this context. It seems that so far only the vertical $g$-functions $\mathfrak{g}_{\boldsymbol{0},m,\om}^{-\boldsymbol{1}/2}$ were investigated and mapping properties as in Theorem~\ref{thm:main} were obtained, though without requiring the symmetry of weights; see Thangavelu \cite[Section~4.1]{Th} and \cite[Section~2]{StTo2} by the second-named author and Torrea. So the part of Theorem~\ref{thm:main} related to $g$-functions provides a meaningful generalization of existing results even in the framework of the classical harmonic oscillator.

Considering the Lusin area type integrals (L-D.V), again essentially we can only appeal to the third author's paper \cite{TZS3} which delivers the special case of the part of Theorem~\ref{thm:main} pertaining to $\mathfrak{S}_{n,m,\om}^{\a}$ given by the restrictions $|n|+m=1$ and $\a \in [-1/2,\infty)^d$; see \cite[Theorem~2.1]{TZS3}. No more seems to have been done in the classical harmonic oscillator context (the case $\a=-\boldsymbol{1}/2$). We only mention that in this special case~\cite{TZS3} extended the results of Betancor, Molina and Rodr\'i{}guez-Mesa~\cite{BMR}, where one-dimensional vertical Lusin area integrals were studied ($n=0$, $m=1$), but in a slightly more general form emerging from involving $L^r$ norms, $r \ge 2$, rather than $L^2$ norms in the def\/inition.

This somewhat lengthy account reveals importance and strength of Theorem~\ref{thm:main}.
We will now proceed with proving this result.
The proof of Theorem~\ref{thm:main} can be reduced to showing analogous mapping properties for certain,
suitably def\/ined, auxiliary Laguerre-type operators emerging
from those introduced above and related to the smaller space $(\RRR,\mu_\a^+,\|\cdot\|)$.
To begin with, for each $\eta \in \{ 0,1 \}^d$ we consider an auxiliary semigroup acting initially on
$L^2(d\mu_\a^+)$ and given by
\begin{gather*}
\mathfrak{T}_t^{\a,\eta,+}f = \big( \mathfrak{T}_t^{\a} f^\eta \big)^+= \sum_{k \in \N^d, \, \overline{k} = \eta}
e^{-t \lambda_{|k|/2}^{\a}} \langle f^{\eta} , h_k^{\a} \rangle_{d\mu_{\alpha}}\, \big( h_k^{\a} \big)^+, \qquad t > 0;
\end{gather*}
observe that $\mathfrak{T}_t^{\a} f^\eta$ is $\eta$-symmetric, which means that the quantity $\mathfrak{T}_t^{\a,\eta,+}f$ is well def\/ined. It is not hard to check that these auxiliary semigroups have the integral representations
\begin{gather} \nonumber
\mathfrak{T}_t^{\a,\eta,+} f (x) = \int_{\RRR} \mathfrak{G}_t^{\a,\eta,+} (x,y) f(y) \, d\mu_\a^+(y), \qquad x\in \RRR, \qquad t>0,\\ \label{def:aheatD}
\mathfrak{G}_t^{\a,\eta,+} (x,y) = (xy)^\eta G_t^{\a + \eta} (x,y), \qquad x,y \in \RRR, \qquad t>0.
\end{gather}
Further, these series/integral formulas coincide and provide a good def\/inition of $\mathfrak{T}_t^{\a,\eta,+}$ on weighted $L^p$ spaces for a large class of weights and produce always smooth functions of $(x,t) \in \RRR \times \mathbb{R}_+$, see Proposition~\ref{pro:conv}. Note that choosing $\eta_0=\boldsymbol{0}$ we have $\mathfrak{T}_t^{\a,\eta_0,+} =T^{\a}_t$.

For $\eta \in \{ 0,1 \}^d$, $n \in \N^d$ and $\om \in \{ -1,1\}^{|n|}$ we denote
\begin{gather*}
\delta_{\eta, n, \om} = \delta_{d,\eta_d, n_d, \om^d} \circ \cdots \circ \delta_{1,\eta_1, n_1, \om^1},
\end{gather*}
where for each $i \in \{1, \ldots, d \}$ we put
\begin{gather*}
\delta_{i,\eta_i, n_i, \om^i} = \big( \om^i_{n_i} \partial_{i, \overline{\eta_i + n_i - 1} } + x_i \big)
\circ \cdots \circ \big( \om^i_2 \partial_{i, \overline{\eta_i + 1} } + x_i \big) \circ \big( \om^i_1 \partial_{i, \overline{\eta_i} } + x_i \big)
\end{gather*}
(by convention, $\delta_{i,\eta_i, 0, \om^i} = \Id$) and
\begin{gather} \label{pp}
\partial_{i, \eta_i } = \partial_{x_i} + \eta_i \frac{2\a_i + 1}{x_i}.
\end{gather}
Notice that the derivatives $\partial_{i, \eta_i }$ and $\delta_{\eta, n, \om}$ correspond to the action of $T_i^\a$ and $\mathfrak{D}^{n, \om}$ on $\eta$-symmetric functions, respectively. To be more precise, if $f$ is $\eta$-symmetric, then $T_i^\a f = \partial_{i, \eta_i } f$ and $\mathfrak{D}^{n, \om} f = \delta_{\eta, n, \om} f$. Moreover, we may also think that $\partial_{i, \eta_i }$ and $\delta_{\eta, n, \om}$ act on functions def\/ined on the restricted space $\RRR$.

Now we are ready to introduce the auxiliary Laguerre-type operators, which are def\/ined initially in $L^2(d\mu_\a^+)$. For each $\eta \in \{0,1\}^d$ we def\/ine the following objects.
\begin{description}\itemsep=0pt
\item[{\rm (L-t.I)}] The Laguerre-type heat semigroup maximal operator
\begin{gather*}
\mathfrak{T}_{*}^{\alpha,\eta, +} f =
\big\| \mathfrak{T}_t^{\alpha,\eta, +} f\big\|_{L^{\infty}(\R_+,dt)}.
\end{gather*}
\item[{\rm (L-t.II)}] Laguerre-type Riesz transforms of order $|n|>0$ and type $\om$
\begin{gather*}
\mathfrak{R}_{n,\om}^{\alpha,\eta, +} f = \sum_{ k \in \N^d, \, \overline{k} = \eta}
\big( \lambda_{|k|/2}^{\a} \big)^{-|n|\slash 2}
	\langle f^\eta , h_k^{\a} \rangle_{d\mu_{\alpha}}\,
\big( \delta_{\eta, n, \om} h_k^{\a} \big)^+,
\end{gather*}
where $n \in \N^d \setminus \{\boldsymbol{0}\}$
and $\om \in \{-1,1\}^{|n|}$.
Observe that if $\overline{k} = \eta$, then
$\delta_{\eta, n, \om} h_k^{\a} = \mathfrak{D}^{n,\om} h_k^{\a}$ is $(\overline{k + n})$-symmetric and hence
the quantity $\big( \delta_{\eta, n, \om} h_k^{\a} \big)^+$ is well def\/ined.
\item[{\rm (L-t.III)}] Multipliers of Laplace and Laplace--Stieltjes transform types
\begin{gather*}
\mathfrak{M}^{\alpha, \eta, +}_{\mathfrak{m}} f =
\sum_{ k \in \N^d, \, \overline{k} = \eta}
\mathfrak{m}( \lambda_{|k|/2}^{\a} )
		\langle f^\eta , h_k^{\a} \rangle_{d\mu_{\alpha}}\,
\big( h_k^{\a} \big)^+,
\end{gather*}
where $\mathfrak{m}$ is as in (L-D.III).
\item[{\rm (L-t.IV)}] Littlewood--Paley--Stein type mixed $g$-functions
\begin{gather*}
\mathfrak{g}^{\alpha,\eta,+}_{n,m,\om}(f) = \big\| \partial_t^m \delta_{\eta, n, \om} \mathfrak{T}_t^{\alpha,\eta, +}f \big\|_{L^2(\R_+,t^{|n|+2m-1}dt)},
\end{gather*}
where $n \in \N^d$, $m \in \N$ are such that $|n|+m>0$, and $\om \in \{-1,1\}^{|n|}$.
\item[{\rm (L-t.V)}] Mixed Lusin area integrals
\begin{gather*}
\mathfrak{S}^{\a,\eta,+}_{n,m,\om}(f)(x)= \left( \int_{A(x)\cap \R^{d+1}_+} t^{|n| + 2m -1} \big|\partial_{t}^m
\delta_{\eta, n, \om} \mathfrak{T}_t^{\alpha,\eta, +} f (z) \big|^{2}\frac{d\mu_{\a}^+(z) \, dt} {V_{\sqrt{t}}^{\a,+}(x)} \right)^{1\slash 2},
\end{gather*}
where $n \in \N^d$, $m \in \N$ are such that $|n|+m>0$, and $\om \in \{-1,1\}^{|n|}$. Further, $A(x)$ is the parabolic cone with vertex at~$x$, see~\eqref{def:A}. Here $V_{t}^{\a,+}(x)$ is the $\mu^+_{\a}$ measure of the cube centered at~$x$ and of side lengths~$2t$, restricted to $\RRR$. More precisely,
\begin{gather}\label{def:V+}
V_{t}^{\a,+}(x) =\prod_{i=1}^{d}V_{t}^{\a_{i},+}(x_i), \qquad x\in\RRR,\qquad t>0,\\ \nonumber
V_{t}^{\a_i,+}(x_i) = \mu_{\a_{i}}^{+}\big((x_{i}-t,x_{i}+t)\cap\mathbb{R}_{+}\big),\qquad x_i > 0,\qquad t>0.
\end{gather}
\end{description}
Notice that the Laguerre-type Lusin area integrals can be written as
\begin{gather*}
\mathfrak{S}^{\a,\eta,+}_{n,m,\om}(f)(x)=\big\| \partial_t^m \delta_{\eta, n, \om} \mathfrak{T}_t^{\alpha,\eta, +} f (x+z) \sqrt{\Xi_{\a}(x,z,t)}
 \chi_{\{x+z \in \RRR \}}\big\|_{L^2(A,t^{ |n| + 2m - 1} dzdt)},
\end{gather*}
where the function $\Xi_{\a}$ is given by
\begin{gather}\label{def:Xi}
\Xi_{\a}(x,z,t)=\prod_{i=1}^{d} \frac{(x_{i}+z_{i})^{2\a_{i} + 1}}{V_{\sqrt{t}}^{\a_{i},+}(x_{i})},
\qquad x\in\RRR,\qquad z\in\RR,\qquad x+z\in\RRR, \qquad t > 0.
\end{gather}
The series def\/ining $\mathfrak{R}_{n,\om}^{\alpha,\eta,+}$ and $\mathfrak{M}^{\alpha,\eta,+}_{\mathfrak{m}}$ converge in $L^2(d\mu_\a^+)$ and produce $L^2(d\mu_\a^+)$-bounded operators. This follows from the analogous properties of $\mathfrak{R}_{n,\om}^{\alpha}$ and $\mathfrak{M}^{\alpha}_{\mathfrak{m}}$, for symmetry reasons. Further, the formulas def\/ining $\mathfrak{T}^{\alpha,\eta,+}_{*}f$, $\mathfrak{g}^{\alpha,\eta,+}_{n,m,\om}(f)$ and $\mathfrak{S}^{\a,\eta,+}_{n,m,\om}(f)$ make sense in a pointwise way for general functions $f$, see Proposition~\ref{pro:conv}.

Arguments similar to those given in \cite[p.~6]{NoSt4} and \cite[pp.~1522--1524]{TZS3} allow us to reduce the proof of Theorem~\ref{thm:main} to showing the following.

\begin{Theorem}\label{thm:main+}
Assume that $\a \in(-1,\infty)^d$ and $\eta \in \{ 0, 1\}^d$. Then the Laguerre-type operators {\rm (L-t.II)} and {\rm (L-t.III)} extend to bounded linear operators on $L^{p}(Ud\mu^+_{\a})$, $U \in A_{p}^{\a,+}$, $1<p<\infty$, and from $L^{1}(Ud\mu^+_{\a})$ to weak $L^{1}(Ud\mu^+_{\a})$, $U \in A_{1}^{\a,+}$.
Furthermore, the sublinear operators~{\rm (L-t.I)}, {\rm (L-t.IV)} and {\rm (L-t.V)} are bounded on $L^{p}(Ud\mu^+_{\a})$, $U \in A_{p}^{\a,+}$, $1<p<\infty$, and from $L^1(Ud\mu^+_{\a})$ to weak $L^1(Ud\mu^+_{\a})$, $U \in A_{1}^{\a,+}$.
\end{Theorem}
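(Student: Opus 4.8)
The plan is to deduce the entire statement from the general vector-valued Calderón--Zygmund theory on the space of homogeneous type $(\RRR,\mu_{\a}^+,\|\cdot\|)$, as packaged in Theorem~\ref{thm:CZ}. First I would recast each of the operators {\rm (L-t.I)}--{\rm (L-t.V)} as (the Banach-space norm of) a linear singular integral operator with values in a suitable Banach space $\B$. For the scalar operators this is immediate: $\mathfrak{R}^{\a,\eta,+}_{n,\om}$ and $\mathfrak{M}^{\a,\eta,+}_{\mathfrak{m}}$ correspond to $\B=\C$. For the maximal operator {\rm (L-t.I)} I take $\B=L^{\infty}(\R_+,dt)$ and view $\mathfrak{T}^{\a,\eta,+}_*f=\|\{\mathfrak{T}^{\a,\eta,+}_t f\}_{t>0}\|_{\B}$; for the $g$-functions {\rm (L-t.IV)} I take $\B=L^2(\R_+,t^{|n|+2m-1}\,dt)$; and for the Lusin area integrals {\rm (L-t.V)}, using the rewriting via $\Xi_{\a}$ recorded at \eqref{def:Xi}, I take $\B=L^2(A,t^{|n|+2m-1}\,dz\,dt)$. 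In every case the asserted $L^p$ and weak type $(1,1)$ bounds follow once the two hypotheses of Theorem~\ref{thm:CZ} are verified: $L^2(d\mu_{\a}^+)$-boundedness of the underlying $\B$-valued operator, and the standard kernel estimates for its associated $\B$-valued kernel.

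The $L^2$-boundedness is the easier half. For $\mathfrak{R}^{\a,\eta,+}_{n,\om}$ and $\mathfrak{M}^{\a,\eta,+}_{\mathfrak{m}}$ it has already been recorded to hold, by Parseval's identity together with \eqref{B} and the boundedness of the multiplier symbols under the assumption \eqref{assum}. For the square functions {\rm (L-t.IV)}, {\rm (L-t.V)} and the maximal operator {\rm (L-t.I)} I would argue through the spectral theorem for the self-adjoint operator generating $\{\mathfrak{T}^{\a,\eta,+}_t\}$: analyticity of the semigroup together with the standard Littlewood--Paley identities yields the $\B$-valued $L^2$ bounds, the growth control on the spectral side again being furnished by \eqref{B}.

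Next I would exhibit the kernels explicitly, so that the standard estimates reduce to estimates on $\mathfrak{G}_t^{\a,\eta,+}$ and its derivatives. Starting from the integral representation \eqref{def:aheatD}, the $\B$-valued kernels of {\rm (L-t.I)}, {\rm (L-t.IV)}, {\rm (L-t.V)} are the families $\{\mathfrak{G}_t^{\a,\eta,+}(x,y)\}_t$ and $\{\partial_t^m\delta_{\eta,n,\om}\mathfrak{G}_t^{\a,\eta,+}(x,y)\}_t$, with $\delta_{\eta,n,\om}$ acting in $x$. For the Riesz transforms the subordination
\begin{gather*}
\big(\lambda_{|k|/2}^{\a}\big)^{-|n|/2}=\frac{1}{\Gamma(|n|/2)}\int_0^{\infty} t^{|n|/2-1}e^{-t\lambda_{|k|/2}^{\a}}\,dt
\end{gather*}
gives the scalar kernel $\frac{1}{\Gamma(|n|/2)}\int_0^{\infty}\delta_{\eta,n,\om}\mathfrak{G}_t^{\a,\eta,+}(x,y)\,t^{|n|/2-1}\,dt$, while the multipliers have kernels $\int_{\R_+}\mathfrak{G}_t^{\a,\eta,+}(x,y)\,d\nu(t)$ and $-\int_0^{\infty}\psi(t)\,\partial_t\mathfrak{G}_t^{\a,\eta,+}(x,y)\,dt$ in the Laplace--Stieltjes and Laplace cases, respectively.

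The hard part, and the heart of the matter, will be establishing the standard estimates for these kernels, namely that the $\B$-valued kernel $K(x,y)$ satisfies the growth bound $\|K(x,y)\|_{\B}\lesssim 1/\mu_{\a}^+(B(x,\|x-y\|))$ together with the Hörmander-type smoothness bounds in $x$ and in $y$. This is exactly the content of Theorem~\ref{thm:kerest}, whose proof occupies the most technical part of the paper. The difficulty concentrates here because, when $\a\notin[-1/2,\infty)^d$, the heat kernel must be treated through the refined representation \eqref{IRL} with its extra summation over $\eps\in\{0,1\}^d$ and the measures $\Pi_{\a+\boldsymbol{1}+\eps}$; differentiating $\e(\z,q_{\pm})$ repeatedly in $x$ and tracking the resulting powers of $\z$, $1-\z$ and of the variables against the cancellations forced by the factors $(xy)^{\eps}$ demands the delicate kernel-estimate technique developed in \cite{NoSz} and refined in \cite{TZS1,TZS2,TZS3}. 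Granting Theorem~\ref{thm:kerest}, a single application of Theorem~\ref{thm:CZ} to each $\B$-valued operator completes the proof.
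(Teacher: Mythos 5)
Your overall route coincides with the paper's: Theorem~\ref{thm:main+} is deduced from the general vector-valued Calder\'on--Zygmund theory on $(\RRR,\mu_{\a}^+,\|\cdot\|)$ by verifying the three ingredients of Theorem~\ref{thm:CZ} --- $L^2(d\mu_{\a}^+)$-boundedness (Proposition~\ref{pro:L2b}), the kernel association (Proposition~\ref{pro:kerassoc}), and the standard estimates (Theorem~\ref{thm:kerest}) --- and your identification of the $\mathbb{B}$-valued kernels agrees with the paper's list (L-t.I)--(L-t.V). Two points, however, need repair. First, for the maximal operator you take $\mathbb{B}=L^{\infty}(\R_+,dt)$; the vector-valued theory (Bochner integration and measurability of the $\mathbb{B}$-valued kernel) requires a separable target, which is why the paper works with the separable subspace $C_0\subset L^{\infty}(\R_+,dt)$.

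Second, and more substantively, your claim that the $L^2(d\mu_{\a}^+)$-bound for the maximal operator (L-t.I) follows from ``the spectral theorem, analyticity of the semigroup and the standard Littlewood--Paley identities'' does not hold up: spectral calculus and Parseval do give the $g$-function bounds (and, via Lemma~\ref{lem:intLusin}(a) and Fubini, the area-integral bounds), but they do not control a supremum in $t$. The usual device for semigroup maximal functions (Stein's maximal theorem, or the Hopf--Dunford--Schwartz ergodic theorem) requires contractivity and positivity on $L^1$ and $L^{\infty}$, which is precisely what is unavailable for $\a\notin[-1/2,\infty)^d$. The paper instead uses the pointwise identity $\mathfrak{T}_*^{\a,\eta,+}f(x)=x^{\eta}\,T_*^{\a+\eta}\big(y^{-\eta}f\big)(x)$, reducing matters to the known $L^2(d\mu_{\a+\eta}^+)$-boundedness of the Laguerre heat maximal operator $T_*^{\a+\eta}$ from \cite{NoSz} (equivalently, one restricts $\mathfrak{T}_*^{\a}$ to $\eta$-symmetric functions). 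With these two corrections your argument matches the paper's proof.
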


To prove Theorem~\ref{thm:main+} we will use the general Calder\'on--Zygmund theory. In fact, we will show that the Laguerre-type operators (L-t.I)--(L-t.V) are (vector-valued) Calder\'on--Zygmund operators in the sense of the space of homogeneous type $(\RRR,\mu_\a^+,\|\cdot\|)$. Then, in particular, the mapping properties claimed in Theorem~\ref{thm:main+} will follow from the general theory and arguments similar to those mentioned for instance in the proof of \cite[Corollary~2.5]{TZS1}. To treat the Lusin area integrals we shall need a slightly more general def\/inition of the standard kernel, or rather standard estimates, than the one used in the papers~\cite{NoSt4,NoSt3,TZS2}. More precisely, we will allow slightly weaker smoothness estimates as indicated below, see for instance~\cite{CaSz,TZS3}.

Let $\mathbb{B}$ be a Banach space and let $K(x,y)$ be a kernel def\/ined on $\RRR \times \RRR \backslash \{(x,y)\colon x=y\}$ and taking values in~$\mathbb{B}$. We say that $K(x,y)$ is a standard kernel in the sense of the space of homogeneous type $(\RRR, \mu^+_{\a},\|\cdot\|)$ if it satisf\/ies the growth estimate
\begin{gather} \label{gr}
\|K(x,y)\|_{\mathbb{B}} \lesssim \frac{1}{\mu^+_{\a}(B(x,\|x-y\|))}, \qquad x \neq y,
\end{gather}
and the smoothness estimates
\begin{gather}
\| K(x,y)-K(x',y)\|_{\mathbb{B}} \lesssim \left(\frac{\|x-x'\|}{\|x-y\|} \right)^{\gamma} \frac{1}{\mu^+_{\a}(B(x,\|x-y\|))},\qquad \|x-y\|>2\|x-x'\|, \label{sm1}\\
\| K(x,y)-K(x,y')\|_{\mathbb{B}} \lesssim \left(\frac{\|y-y'\|}{\|x-y\|} \right)^{\gamma} \frac{1}{\mu^+_{\a}(B(x,\|x-y\|))}, \qquad \|x-y\|>2\|y-y'\|, \label{sm2}\!\!\!
\end{gather}
for some f\/ixed $\gamma > 0$. Notice that the bounds \eqref{sm1} and \eqref{sm2} imply analogous estimates with any $0<\gamma'<\gamma$ instead of~$\gamma$.
Further, observe that in these formulas, the ball $B(x,\|y-x\|)$ can be replaced by $B(y,\|x-y\|)$, in view of the doubling property of $\mu^+_{\a}$. Furthermore, when $K(x,y)$ is scalar-valued (i.e., $\mathbb{B}=\mathbb{C}$) and $\gamma = 1$, the dif\/ference bounds~\eqref{sm1} and~\eqref{sm2} are implied by the more convenient gradient estimate
\begin{gather} \label{grad}
\|\nabla_{\! x,y} K(x,y)\| \lesssim \frac{1}{\|x-y\|\mu^+_{\a}(B(x,\|x-y\|))}, \qquad x \neq y.
\end{gather}
Similar reduction holds also in the vector-valued situations we consider. Here, however, we will also use~\eqref{sm1} and~\eqref{sm2} with $\gamma <1$ and thus it is more convenient to verify the smoothness estimates rather than \eqref{grad}.

A linear operator $T$ assigning to each $f\in L^2(d\mu^+_{\a})$ a measurable $\B$-valued function~$Tf$ on~$\RRR$ is a (vector-valued) Calder\'on--Zygmund operator in the sense of the space $(\RRR,\mu^+_{\a},\|\cdot\|)$ if
\begin{itemize}\itemsep=0pt
 \item[(i)] $T$ is bounded from $L^2(d\mu^+_{\a})$ to $L^2_{\B}(d\mu^+_{\a})$,
 \item[(ii)] there exists a standard $\B$-valued kernel $K(x,y)$ such that
\begin{gather*}
Tf(x)=\int_{\RRR} K(x,y) f(y) \, d\mu^+_{\a}(y),\qquad \textrm{a.a.}\,\,\, x\notin \supp f,
\end{gather*}
for every $f \in L_c^{\infty}(\RRR)$,
where $L_c^{\infty}(\RRR)$ is the subspace of $L^{\infty}(\RRR)$ of bounded measurable functions
with compact supports.
\end{itemize}
Here integration of $\mathbb{B}$-valued functions is understood in Bochner's sense, and $L^2_{\mathbb{B}}(d\mu^+_{\a})$ is the Bochner--Lebesgue space of all $\mathbb{B}$-valued $\mu^+_{\a}$-square integrable functions on $\RRR$.

Classical theory of Calder\'on--Zygmund operators, see, e.g., \cite[Chapter~6]{Ch},
\cite[Chapter~5]{DUO} or \cite[Chapter~4]{G}, is nowadays a standard tool in analysis.
Moreover, it is well known that a large part of this theory remains valid,
with appropriate adjustments, when the underlying space is of homogeneous type and the associated kernels
are vector-valued, see for instance \cite{RRT} and \cite{RT}.

The following result, together with the arguments discussed above, implies Theorem~\ref{thm:main+} and thus also Theorem~\ref{thm:main}.

\begin{Theorem}\label{thm:CZ}
Assume that $\a \in (-1,\infty)^{d}$ and $\eta\in \{ 0 , 1 \}^d$. The Laguerre-type Riesz transforms~{\rm (L-t.II)} and the multipliers of Laplace and Laplace--Stieltjes transform types {\rm (L-t.III)} are scalar-valued Calder\'on--Zygmund operators in the sense of the space $(\RRR,\mu_{\a}^+,\|\cdot\|)$.
Furthermore, the Laguerre-type heat semigroup maximal operator {\rm (L-t.I)}, the mixed $g$-functions \mbox{{\rm (L-t.IV)}} and the mixed Lusin area integrals {\rm (L-t.V)} can be viewed as vector-valued Calder\'on--Zygmund operators in the sense of $(\RRR,\mu_{\a}^+,\|\cdot\|)$ associated with the Banach spaces $\mathbb{B}=C_0$, $\mathbb{B}=L^2(\R_+,t^{|n|+2m-1}dt)$ and $\mathbb{B}=L^2(A,t^{|n|+2m-1}dzdt)$, respectively.
\end{Theorem}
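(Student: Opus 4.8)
The plan is to establish, for each of the five families (L-t.I)--(L-t.V), the two requirements in the definition of a (vector-valued) Calder\'on--Zygmund operator relative to $(\RRR,\mu_\a^+,\|\cdot\|)$: the $L^2$ mapping (i) and the existence of an associated standard $\B$-valued kernel with the off-diagonal representation (ii). The $L^2$ step is comparatively soft. For the Riesz transforms (L-t.II) and the multipliers (L-t.III) the $L^2(d\mu_\a^+)$-boundedness has already been recorded, the former via Parseval together with \eqref{A} and \eqref{B}, the latter because $\mathfrak{m}$ stays bounded on the spectrum. For the maximal operator (L-t.I) the $C_0$-valued bound follows from the maximal theorem for symmetric contraction semigroups, the auxiliary kernel $\mathfrak{G}_t^{\a,\eta,+}(x,y)=(xy)^\eta G_t^{\a+\eta}(x,y)$ being strictly positive on $\RRR\times\RRR$. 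For the $g$-functions (L-t.IV) a direct Parseval computation does it: acting on $h_k^\a$ yields the factor $\tau_\om^\a(k)(-\lambda_{|k|/2}^\a)^m e^{-t\lambda_{|k|/2}^\a}$, and after the substitution $u=2t\lambda_{|k|/2}^\a$ the weighted $t$-integral equals a constant times $\tau_\om^\a(k)^2(\lambda_{|k|/2}^\a)^{-|n|}$, which is bounded uniformly in $k$ by \eqref{B}. For the area integrals (L-t.V) the $L^2$ bound reduces to that of the $g$-functions by Fubini, since for $x$ with $\|x-z\|<\sqrt t$ one has $V_{\sqrt t}^{\a,+}(x)\simeq\mu_\a^+(B(z,\sqrt t))$ and hence $\int_{\|x-z\|<\sqrt t}d\mu_\a^+(x)/V_{\sqrt t}^{\a,+}(x)\lesssim 1$.

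The second step is to read off the kernels as Bochner integrals built from $\mathfrak{G}_t^{\a,\eta,+}$ in \eqref{def:aheatD}. For (L-t.I), (L-t.IV) and (L-t.V) the kernel is simply the assignment $t\mapsto\mathfrak{G}_t^{\a,\eta,+}(x,y)$, respectively $t\mapsto\partial_t^m\delta_{\eta,n,\om}\mathfrak{G}_t^{\a,\eta,+}(x,y)$ and $(z,t)\mapsto\partial_t^m\delta_{\eta,n,\om}\mathfrak{G}_t^{\a,\eta,+}(x+z,y)\sqrt{\Xi_\a(x,z,t)}\chi_{\{x+z\in\RRR\}}$, regarded as a vector in $C_0$, in $L^2(\R_+,t^{|n|+2m-1}dt)$ or in $L^2(A,t^{|n|+2m-1}dzdt)$; here $\delta_{\eta,n,\om}$ acts in the first spatial variable. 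For the Riesz transforms I would use the subordination formula $(\lambda_{|k|/2}^\a)^{-|n|/2}=\Gamma(|n|/2)^{-1}\int_0^\infty t^{|n|/2-1}e^{-t\lambda_{|k|/2}^\a}\,dt$, legitimate since $\lambda_0^\a=2|\a|+2d>0$, to obtain the scalar kernel $\Gamma(|n|/2)^{-1}\int_0^\infty t^{|n|/2-1}\delta_{\eta,n,\om}\mathfrak{G}_t^{\a,\eta,+}(x,y)\,dt$; for the two multiplier types I would write $\lambda e^{-t\lambda}=-\partial_t e^{-t\lambda}$ and integrate against $\psi(t)\,dt$ or $d\nu(t)$, giving $-\int_0^\infty\psi(t)\partial_t\mathfrak{G}_t^{\a,\eta,+}(x,y)\,dt$ and $\int_{\R_+}\mathfrak{G}_t^{\a,\eta,+}(x,y)\,d\nu(t)$. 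In each case one checks, with the help of Proposition~\ref{pro:conv} and Fubini, that these formulas represent the operator for $x\notin\supp f$ and $f\in L_c^\infty(\RRR)$.

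The decisive step is the verification of the standard estimates \eqref{gr}, \eqref{sm1} and \eqref{sm2} in the relevant Banach norm. All of them reduce to pointwise bounds on $\mathfrak{G}_t^{\a,\eta,+}$, on its $t$-derivatives, and on the action of $\delta_{\eta,n,\om}$ (hence of the operators $\partial_{i,\eta_i}$ from \eqref{pp}), together with the corresponding first differences in $x$ and in $y$. Inserting the representation \eqref{IRL} for $G_t^{\a+\eta}$ one is reduced to estimating, uniformly over the extra parameters, the factor $\e(\z,q_{\pm})$ times powers of $(1-\z^2)/\z$ and of the coordinates, and their derivatives, and then integrating in $s$ against $\Pi_{\a+\eta+\boldsymbol{1}+\eps}(ds)$ and in $\z$ (equivalently $t$). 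This is precisely the content of the kernel-estimates theorem (Theorem~\ref{thm:kerest}) proved in Section~\ref{sec:ker}, which I would invoke; with \eqref{gr}, \eqref{sm1}, \eqref{sm2} in hand, (i) and (ii) combine to give the asserted Calder\'on--Zygmund property.

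The main obstacle is this last step. Three features make it delicate: the representation \eqref{IRL}, rather than its more elementary counterpart valid only for $\a\in[-1/2,\infty)^d$, must be used when $\a\notin[-1/2,\infty)^d$, so the negative-multiplicity regime genuinely requires the refined machinery of \cite{NoSz}; the derivatives $\delta_{\eta,n,\om}$ produce the singular factors $(2\a_i+1)/x_i$ near the boundary, whose interplay with the Gaussian $\e$ and with the $y$-differences must be controlled without sacrificing the H\"older exponent $\gamma$ in \eqref{sm1} and \eqref{sm2}; and for the area integrals the weight $\Xi_\a$ and the cone $A$ force one to work with a smoothness exponent $\gamma<1$ and to estimate the $L^2(A,\ldots)$-norm directly rather than through the convenient gradient bound \eqref{grad}. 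Everything else in the argument is routine once these kernel estimates are secured.
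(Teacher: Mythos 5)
Your proposal follows essentially the same route as the paper: the Calder\'on--Zygmund property is split into $L^2(d\mu_\a^+)$-boundedness, identification of the associated kernels built from $\mathfrak{G}_t^{\a,\eta,+}$ (via subordination for the Riesz transforms and $\lambda e^{-t\lambda}=-\partial_t e^{-t\lambda}$ for the Laplace-type multipliers), and the standard estimates, which are exactly the content of Theorem~\ref{thm:kerest} proved in Section~\ref{sec:ker}; your $L^2$ arguments for (L-t.II)--(L-t.V) and your account of the main difficulties match the paper's Propositions~\ref{pro:L2b} and~\ref{pro:kerassoc}.

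The one step I would not accept as written is the $L^2$ bound for the maximal operator (L-t.I). You invoke ``the maximal theorem for symmetric contraction semigroups,'' but for $\eta\neq\boldsymbol{0}$ the auxiliary semigroups $\mathfrak{T}_t^{\a,\eta,+}$ are not obviously sub-Markovian (contractive on $L^1$ and $L^\infty$), and strict positivity of the kernel alone does not put you in the scope of Stein's theorem. The paper avoids this either by restricting the $L^2(d\mu_\a)$-bounded operator $\mathfrak{T}_*^{\a}$ to $\eta$-symmetric functions, or more directly via the identity $\mathfrak{T}_{*}^{\a,\eta,+}f(x)=x^\eta\,T_{*}^{\a+\eta}\big(y^{-\eta}f\big)(x)$, which transfers the problem to the known $L^2(d\mu_{\a+\eta}^+)$-boundedness of the Laguerre maximal operator $T_*^{\a+\eta}$. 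Replacing your appeal to the semigroup maximal theorem by this reduction closes the gap; everything else is sound.
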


Proving Theorem~\ref{thm:CZ} splits naturally into showing the following three results (Propositions~\ref{pro:L2b} and~\ref{pro:kerassoc}, and Theorem~\ref{thm:kerest}).

\begin{Proposition}\label{pro:L2b}
Let $\a\in(-1,\infty)^d$ and $\eta\in \{ 0 , 1 \}^d$. Then the Laguerre-type operators from Theorem~{\rm \ref{thm:CZ}} are bounded on $L^2(d\mu_{\a}^{+})$.
\end{Proposition}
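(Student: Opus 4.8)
The plan is to establish $L^2(d\mu_\a^+)$-boundedness of each of the five families of auxiliary operators by reducing them, via the symmetry relations recorded earlier, to the corresponding operators in the whole-space Laguerre--Dunkl setting, whose $L^2(d\mu_\a)$-boundedness is either obvious or already noted in the excerpt. The key observation driving everything is that each auxiliary operator acts only on the single $\eta$-symmetric subspace: for $f\in L^2(d\mu_\a^+)$ the extension $f^\eta$ lies in $L^2(d\mu_\a)$ with $\|f^\eta\|_{L^2(d\mu_\a)}=\|f\|_{L^2(d\mu_\a^+)}$ (up to a harmless constant), and conversely restriction to $\RRR$ of an $\eta$-symmetric function is an isometry back. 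Thus it suffices to control the corresponding full-space operator on the closed subspace spanned by $\{h_k^\a\colon \overline{k}=\eta\}$ and then transfer via these isometries.

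First I would treat the multipliers \textrm{(L-t.III)} and the Riesz transforms \textrm{(L-t.II)}, which are the cleanest cases. For \textrm{(L-t.III)}, since $\{h_k^\a\}$ is orthonormal and $\mathfrak{m}$ is bounded on the relevant spectrum (immediate in the Laplace type case as the values of $\mathfrak{m}$ stay bounded, and in the Laplace--Stieltjes case by the integrability assumption \eqref{assum}), Parseval's identity gives the $L^2$ bound directly, and restricting the sum to $\overline{k}=\eta$ only improves matters. For \textrm{(L-t.II)} I would invoke \eqref{A} and the coefficient bound \eqref{B}: since $\mathfrak{D}^{n,\om}h_k^\a=\tau_\om^\a(k)\,h_{k-\sum_i|\om^i|e_i}^\a$ with $0\le\tau_\om^\a(k)\lesssim(\lambda_{|k|/2}^\a)^{|n|/2}$, the normalizing factor $(\lambda_{|k|/2}^\a)^{-|n|/2}$ exactly compensates the growth, so the map sends orthonormal vectors to a bounded multiple of (shifted) orthonormal vectors. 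Parseval then yields $L^2$-boundedness of $\mathfrak{R}_{n,\om}^\a$ on $L^2(d\mu_\a)$, and passing to the $\eta$-symmetric subspace and restricting gives the claim for $\mathfrak{R}_{n,\om}^{\a,\eta,+}$, exactly as the excerpt already asserts for symmetry reasons.

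Next I would handle the three sublinear operators \textrm{(L-t.I)}, \textrm{(L-t.IV)}, \textrm{(L-t.V)}, which are genuine square functions or maximal operators and require a Littlewood--Paley / spectral-theory argument rather than mere orthogonality. For the mixed $g$-functions \textrm{(L-t.IV)} the standard approach is to compute $\|\mathfrak{g}^{\a,\eta,+}_{n,m,\om}(f)\|_{L^2(d\mu_\a^+)}^2$ by Fubini, interchanging the spatial integral with the $L^2(\R_+,t^{|n|+2m-1}dt)$ norm, expanding $f$ in the basis $\{h_k^\a\}$ and using that $\partial_t^m\delta_{\eta,n,\om}\mathfrak{T}_t^{\a,\eta,+}$ acts diagonally, sending $h_k^\a$ to $\tau_\om^\a(k)(-\lambda_{|k|/2}^\a)^m e^{-t\lambda_{|k|/2}^\a}$ times a shifted basis element; the $t$-integral $\int_0^\infty t^{|n|+2m-1}(\lambda_{|k|/2}^\a)^{2m}(\tau_\om^\a(k))^2 e^{-2t\lambda_{|k|/2}^\a}\,dt$ evaluates, via the Gamma integral, to a constant multiple of $(\lambda_{|k|/2}^\a)^{|n|}(\tau_\om^\a(k))^2(\lambda_{|k|/2}^\a)^{-|n|}=(\tau_\om^\a(k))^2\lesssim(\lambda_{|k|/2}^\a)^{|n|}\cdot(\lambda_{|k|/2}^\a)^{-|n|}$, i.e.\ to a uniformly bounded factor by \eqref{B}, giving $\|\mathfrak{g}^{\a,\eta,+}_{n,m,\om}(f)\|_2\lesssim\|f\|_2$. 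For the maximal operator \textrm{(L-t.I)} I would bound it by the heat-semigroup maximal operator of the self-adjoint contraction semigroup $\mathfrak{T}_t^\a$, which is $L^2(d\mu_\a)$-bounded by the general Stein maximal theorem for symmetric diffusion (or contraction) semigroups, and then transfer to the $\eta$-symmetric component. For the Lusin area integrals \textrm{(L-t.V)} the clean route is the well-known pointwise/Fubini identity that integrates the cone variable out: by the definition of $V_{\sqrt t}^{\a,+}$ and an averaging argument one reduces $\|\mathfrak{S}^{\a,\eta,+}_{n,m,\om}(f)\|_{L^2(d\mu_\a^+)}$ to a constant multiple of $\|\mathfrak{g}^{\a,\eta,+}_{n,m,\om}(f)\|_{L^2(d\mu_\a^+)}$, so that \textrm{(L-t.V)} inherits its $L^2$ bound from \textrm{(L-t.IV)}.

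The main obstacle I anticipate is the Lusin area integral: justifying the reduction of its $L^2$ norm to the $g$-function norm requires interchanging the cone integration over $A$ with the spatial integration against $d\mu_\a^+$ and checking that the normalization by the cube measure $V_{\sqrt t}^{\a,+}(x)$ produces exactly a bounded averaging factor, i.e.\ that $\int_{\{|z|<\sqrt t\}\cap(\RRR-x)}\Xi_\a(x,z,t)\,d\mu_\a^+(x)$ stays comparable to a constant uniformly in $t$. This is where the local doubling behavior of $\mu_\a^+$ and the explicit form \eqref{def:Xi} of $\Xi_\a$ must be used carefully; everything else rests on Parseval's identity, the spectral bound \eqref{B}, and the elementary Gamma-function integral. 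For this step I would cite the analogous computations in \cite{TZS3}, where the identical reduction is carried out in the first-order case, the extension to arbitrary $n,m$ being purely notational since the diagonal action and the coefficient bounds are already in hand.
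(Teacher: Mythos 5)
Your treatment of four of the five families matches the paper's own proof in substance: Parseval's identity together with \eqref{A} and \eqref{B} for the Riesz transforms (L-t.II) and the boundedness of $\mathfrak{m}$ for the multipliers (L-t.III); term-by-term differentiation of the spectral series, Fubini and the Gamma integral for the mixed $g$-functions (L-t.IV); and the reduction of the Lusin area integrals (L-t.V) to the $g$-functions via the normalization $\int_{\|z\|<\sqrt{t}}\Xi_{\a}(x,z,t)\chi_{\{x+z\in\RRR\}}\,dz\simeq 1$, which is precisely Lemma~\ref{lem:intLusin}(a) and is the crux you correctly isolate. All of that is sound, and the passage between $L^2(d\mu_{\a}^+)$ and the $\eta$-symmetric subspace of $L^2(d\mu_{\a})$ is the same symmetry reduction the paper uses.

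The gap is in your argument for the maximal operator (L-t.I). You appeal to ``the general Stein maximal theorem for symmetric diffusion (or contraction) semigroups'' to get the $L^2(d\mu_{\a})$-boundedness of $\mathfrak{T}_*^{\a}$. Stein's maximal theorem, and its extensions, require contractivity on $L^1$ and $L^{\infty}$, which in practice comes from positivity of the kernel; self-adjoint contractivity on $L^2$ alone is not enough for an $L^2$ maximal theorem. But the very point of admitting $\a\notin[-1/2,\infty)^d$ is that the Laguerre--Dunkl heat kernel $\mathfrak{G}_t^{\a}(x,y)$ then takes both positive and negative values (as noted after~\eqref{GD}), so $\mathfrak{T}_t^{\a}$ is not a symmetric diffusion semigroup and the theorem does not apply in exactly the parameter range this paper is extending to. The paper closes this in either of two ways: by the pointwise domination $|\mathfrak{G}_t^{\a}(x,y)|\lesssim G_t^{\a}(x,y)$, valid for all $\a\in(-1,\infty)^d$, which reduces matters to the positive Laguerre kernel and the known boundedness of $T_*^{\a}$; or, more directly for the auxiliary operator, by the identity $\mathfrak{T}_*^{\a,\eta,+}f(x)=x^{\eta}\,T_*^{\a+\eta}\big(y^{-\eta}f\big)(x)$ combined with the $L^2(d\mu_{\a+\eta}^+)$-boundedness of $T_*^{\a+\eta}$ from~\cite{NoSz}. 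Without one of these reductions your proof of the (L-t.I) part does not go through for non-positive multiplicity functions.
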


\begin{proof}The $L^2(d\mu_\a^+)$-boundedness of $\mathfrak{R}_{n,\om}^{\a,\eta,+}$ and $\mathfrak{M}^{\alpha,\eta,+}_{\mathfrak{m}}$ is already justif\/ied, see the comment preceding the statement of Theorem~\ref{thm:main+}.

Considering $\mathfrak{T}_*^{\a,\eta,+}$, its $L^2(d\mu_{\a}^+)$-boundedness follows from the $L^2(d\mu_{\a})$-boundedness of $\mathfrak{T}_*^{\a}$ (see the comments following Theorem~\ref{thm:main}) via restricting its action to $\eta$-symmetric functions. Alternatively, one can argue more directly, similarly as it was done in the Bessel--Dunkl setting \cite[p.~953]{CaSz}. Observe that, see~\eqref{def:aheatD},
\begin{gather*}
\mathfrak{T}_{*}^{\a,\eta,+}f(x)=x^\eta T_{*}^{\a+\eta}\big({y^{-\eta}}f\big)(x), \qquad x\in\RRR,
\end{gather*}
where $T_*^\a f=\|T_t^\a f\|_{L^\infty(\R_+,dt)}$ is the Laguerre heat semigroup maximal operator. Since $T_{*}^{\a}$ is bounded on $L^2(d\mu_\a^+)$ (cf.\ \cite[Theorem~4.1]{NoSz} and also references given there), we obtain
\begin{gather*}
\big\|\mathfrak{T}_{*}^{\a,\eta,+}f \big\|_{L^2(d\mu_\a^+)} =\big\|T_{*}^{\a+\eta}\big( {y^{-\eta}}f\big)\big\|_{L^2(d\mu_{\a+\eta}^+)} \lesssim
\big\| {y^{-\eta}}f\big\|_{L^2(d\mu_{\a+\eta}^+)} = \|f\|_{L^2(d\mu_\a^+)}.
\end{gather*}

Passing to $\mathfrak{g}^{\a,\eta,+}_{n,m,\om}$, we f\/irst note that the special cases $|n|+m=1$ are contained in \cite[Proposition~2.4]{TZS3}. Actually, that result is stated under the assumption $\a \in [-1/2,\infty)^d$, nevertheless the argument given there is valid for all $\a \in (-1,\infty)^d$. To show the $L^2(d\mu_{\a}^+)$-boundedness of~$\mathfrak{g}^{\a,\eta,+}_{n,m,\om}$ in the general case it is enough to verify the $L^2(d\mu_{\a})$-boundedness of $\mathfrak{g}^{\a}_{n,m,\om}$, since then the desired property will follow via restricting to $\eta$-symmetric functions. Dif\/ferentiating the series def\/ining $\mathfrak{T}_t^{\a}f$ (this is legitimate, see the proof of Proposition~\ref{pro:conv}) and using \eqref{A} we get
\begin{gather*}
\partial_t^m \mathfrak{D}^{n,\om} \mathfrak{T}_t^{\a}f = \sum_{k \in \N^d} 	\big( {-} \lambda^{\a}_{|k|/2} \big)^m \tau_{\om}^{\a}(k) e^{-t\lambda^{\a}_{|k|/2}} \langle f, h_k^{\a}\rangle_{d\mu_{\a}} h^{\a}_{k-\sum_{i=1}^d |\om^i|e_i}.
\end{gather*}
Now, changing the order of integration and then using Parseval's identity and \eqref{B}, we arrive at the bound
\begin{gather*}
\big\| \mathfrak{g}^{\a}_{n,m,\om}(f)\big\|_{L^2(d\mu_{\a})}^2 \lesssim 	\int_0^{\infty} \left( \sum_{k \in \N^d} \big(\lambda_{|k|/2}^{\a}\big)^{|n|+2m} e^{-2t\lambda_{|k|/2}^{\a}} \big| \langle f, h_k^{\a}\rangle_{d\mu_{\a}}\big|^2 \right) t^{|n|+2m-1}\, dt.
\end{gather*}
Changing the order of integration and summation, evaluating the integral and then using once again Parseval's identity leads directly to the $L^2$ bound for $\mathfrak{g}^{\a}_{n,m,\om}$.

Finally, the $L^2(d\mu_{\a}^+)$-boundedness of $\mathfrak{S}_{n,m,\om}^{\a,\eta,+}$ is a~consequence of the same property for $\mathfrak{g}_{n,m,\om}^{\a,\eta,+}$ which is already justif\/ied. Indeed, with the aid of Lemma~\ref{lem:intLusin}(a) below one easily verif\/ies that
\begin{gather*}
\big\| \mathfrak{S}^{\a,\eta,+}_{n,m,\om} (f) \big\|_{L^2(d\mu_\a^+)}
	\simeq \big\| \mathfrak{g}^{\a,\eta,+}_{n,m,\om} (f) \big\|_{L^2(d\mu_\a^+)},
\qquad f \in L^2(d\mu_\a^+).
\end{gather*}
This f\/inishes the proof.
\end{proof}

Formal computations and the results from papers \cite{NoSt4,NoSt3,TZS2,TZS3} suggest that the Laguerre-type operators are associated with the following kernels related to appropriate Banach spaces~$\mathbb{B}$.
\begin{description}\itemsep=0pt
\item[{\rm (L-t.I)}] The kernel associated with the Laguerre-type heat semigroup maximal operator,
\begin{gather*}
\mathfrak{U}^{\alpha, \eta, +}(x,y) = \big\{\mathfrak{G}_t^{\alpha,\eta,+}(x,y)\big\}_{t>0},
\qquad \mathbb{B}= C_0 \subset L^{\infty}(\R_+,dt).
\end{gather*}
\item[{\rm (L-t.II)}] The kernels associated with the Laguerre-type Riesz transforms,
\begin{gather*}
\mathfrak{R}_{n,\om}^{\alpha,\eta,+}(x,y) = \frac{1}{\Gamma(|n|\slash 2)}
\int_0^{\infty} \delta_{\eta, n, \om, x} \mathfrak{G}_t^{\alpha, \eta, +}(x,y)
	t^{|n|\slash 2 -1}\, dt, \qquad \mathbb{B}=\mathbb{C},
\end{gather*}
where $n \in \N^d \setminus \{\boldsymbol{0}\}$ and $\om \in \{-1,1\}^{|n|}$.
\item[{\rm (L-t.IIIa)}] The kernels associated with the Laplace transform type multipliers,
\begin{gather*}
\mathfrak{K}^{\alpha, \eta, +}_{\psi}(x,y) = - \int_0^{\infty} \psi(t) \partial_t \mathfrak{G}_t^{\alpha, \eta, +}(x,y)\, dt,
		\qquad \mathbb{B}=\mathbb{C},
\end{gather*}
where $\psi \in L^{\infty}(\R_+,dt)$.
\item[{\rm (L-t.IIIb)}] The kernels associated with the Laplace--Stieltjes transform type multipliers,
\begin{gather*}
\mathfrak{K}^{\alpha, \eta, +}_{\nu}(x,y) = \int_{\R_+} \mathfrak{G}_t^{\alpha, \eta, +}(x,y)\, d\nu(t), \qquad \mathbb{B}=\mathbb{C},
\end{gather*}
where $\nu$ is a signed or complex Borel measure on $\R_+$ with the total variation $|\nu|$ satis\-fying~\eqref{assum}.
\item[{\rm (L-t.IV)}] The kernels associated with the mixed $g$-functions,
\begin{gather*}
\mathfrak{H}^{\alpha, \eta, +}_{n,m,\om}(x,y) = \big\{ \partial_t^m \delta_{\eta, n, \om, x}
\mathfrak{G}_t^{\alpha, \eta, +}(x,y) \big\}_{t>0}, \qquad \mathbb{B} = L^2(\R_+,t^{|n|+2m-1}dt),
\end{gather*}
where $n \in \N^d$, $m \in \N$ are such that $|n|+m>0$, and $\om \in \{-1,1\}^{|n|}$.
\item[{\rm (L-t.V)}] The kernels associated with the mixed Lusin area integrals
\begin{gather*}
\mathfrak{S}^{\alpha, \eta, +}_{n,m,\om}(x,y)
= \left\{ \partial_t^m \delta_{\eta, n, \om, \mathbf{x}} \mathfrak{G}_{t}^{\a,\eta,+}(\mathbf{x},y)
\Big|_{\mathbf{x} = x+z} \sqrt{\Xi_{\a}(x,z,t)}
\,\chi_{\{x+z\in\RRR\}} \right\}_{(z,t) \in A}
\end{gather*}
with $\mathbb{B} = L^2(A,t^{|n|+2m-1}dzdt)$, where $n \in \N^d$, $m \in \N$ are such that $|n|+m>0$, and $\om \in \{-1,1\}^{|n|}$.
\end{description}

The next result shows that the associations are indeed true in the Calder\'on--Zygmund theory sense.

\begin{Proposition}\label{pro:kerassoc}
Assume that $\a \in (-1, \infty)^d$ and $\eta \in \{ 0 , 1 \}^d$. Then the Laguerre-type operators {\rm (L-t.I)}--\emph{(L-t.V)} are associated, in the Calder\'on--Zygmund theory sense, with the corresponding kernels just listed.
\end{Proposition}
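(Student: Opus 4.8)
The plan is to establish, operator by operator, property~(ii) in the definition of a (vector-valued) Calder\'on--Zygmund operator: for $f \in L_c^\infty(\RRR)$ and almost every $x \notin \supp f$ the value $Tf(x)$ equals $\int_{\RRR} K(x,y)\, f(y)\, d\mu_\a^+(y)$ with $K$ the kernel listed for the operator in question. The uniform device is the semigroup integral representation~\eqref{def:aheatD}. On the $L^2(d\mu_\a^+)$ level, the subordination formula $\big(\lambda_{|k|/2}^\a\big)^{-|n|/2} = \frac{1}{\Gamma(|n|/2)}\int_0^\infty t^{|n|/2-1} e^{-t\lambda_{|k|/2}^\a}\,dt$ recasts the Riesz transform (L-t.II) as $\frac{1}{\Gamma(|n|/2)}\int_0^\infty t^{|n|/2-1}\,\delta_{\eta,n,\om}\mathfrak{T}_t^{\a,\eta,+}f\,dt$, while the identities $\mathfrak{m}(\lambda) = -\int_0^\infty \psi(t)\,\partial_t e^{-t\lambda}\,dt$ and $\mathfrak{m}(\lambda) = \int_{\R_+} e^{-t\lambda}\,d\nu(t)$ recast the multipliers (L-t.IIIa), (L-t.IIIb) as $-\int_0^\infty \psi(t)\,\partial_t \mathfrak{T}_t^{\a,\eta,+}f\,dt$ and $\int_{\R_+}\mathfrak{T}_t^{\a,\eta,+}f\,d\nu(t)$, respectively. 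The operators (L-t.I), (L-t.IV), (L-t.V) are already assembled directly from $\mathfrak{T}_t^{\a,\eta,+}f$ and its $\partial_t^m$- and $\delta_{\eta,n,\om}$-derivatives, so their $\B$-valued kernels are the evident families built from $\mathfrak{G}_t^{\a,\eta,+}(x,y)$.

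With these representations in hand, I would fix $f \in L_c^\infty(\RRR)$ and a point $x \notin \supp f$, insert~\eqref{def:aheatD}, and then transfer the integration against $f$ to the outside, past the $t$-integral (or the $\nu$-integral), past $\partial_t^m$, and past the composite derivative $\delta_{\eta,n,\om,x}$. Differentiation under the integral sign and the Fubini interchanges are all licit once the corresponding iterated integral of absolute values is finite; for the Riesz case, for instance, it suffices that
\[ \int_0^\infty \int_{\RRR} \big| \delta_{\eta,n,\om,x}\mathfrak{G}_t^{\a,\eta,+}(x,y)\big|\,|f(y)|\,t^{|n|/2-1}\,d\mu_\a^+(y)\,dt < \infty \]
for almost all $x \notin \supp f$, with the obvious analogues (carrying $\partial_t^m$, the weight $\psi$, the measure $d|\nu|$, or the integration $t^{|n|+2m-1}dz\,dt$ over the cone $A$) for the remaining operators.

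These finiteness statements, together with the convergence of the $t$-integrals defining the scalar kernels for $x\neq y$, come from pointwise bounds on $\partial_t^m\delta_{\eta,n,\om,x}\mathfrak{G}_t^{\a,\eta,+}(x,y)$ extracted from the explicit representation~\eqref{IRL} of $G_t^{\a+\eta}$. Splitting the $t$-range at $t=1$, near $t=0^+$ the Gaussian factor $\e\bb = \exp\big(-\frac{1}{4\z}q_{+}-\frac{\z}{4}q_{-}\big)$ forces rapid decay, since $q_+ \ge \|x-y\|^2 > 0$ uniformly in $s$ when $x\neq y$, overwhelming the negative powers of $\z$ produced by the prefactors in~\eqref{IRL} and by the derivatives; near $t=\infty$ the spectral gap $\lambda_0^\a>0$ gives the bound $|\mathfrak{G}_t^{\a,\eta,+}(x,y)|\lesssim e^{-t\lambda_0^\a}$, and this is precisely where assumption~\eqref{assum} is used to control the $\nu$-integral in the Laplace--Stieltjes case. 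After integrating out $t$ one is left, for each fixed $x$, with a function of $y$ that is integrable over the compact set $\supp f$ from which $x$ is separated; this legitimizes Fubini and yields the stated kernel representation. The singular factors $\frac{2\a_i+1}{x_i}$ and the multiplications by $x_i$ hidden inside $\delta_{\eta,n,\om,x}$, see~\eqref{pp}, cause no trouble, because $x$ ranges over the interior $\RRR$, where they are locally bounded.

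The main obstacle is the uniform control of the composite derivative $\delta_{\eta,n,\om,x}$ acting on the heat kernel: each building block $\om^i_j\,\partial_{i,\overline{\eta_i+j-1}} + x_i$ either differentiates the Gaussian-type kernel, generating additional powers of $\z^{-1}$ and of the variables through the exponent, or multiplies by $x_i$, and one has to track how these interact with the weight $(xy)^\eta$ and with the negative powers of $\z$ in~\eqref{IRL} so that the resulting majorant remains integrable in $t$ at both endpoints. This is the same bookkeeping carried out in the kernel analysis of Section~\ref{sec:ker}; for the present association, however, only the integrability it yields is needed, not the sharp growth and smoothness bounds of Theorem~\ref{thm:kerest}.
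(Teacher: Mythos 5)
Your proposal is correct in its analytic substance and rests on the same pillars as the paper's argument: the semigroup integral representation, differentiation under the integral sign, and Fubini interchanges justified by majorants for $\partial_t^m\delta_{\eta,n,\om,x}\mathfrak{G}_t^{\a,\eta,+}(x,y)$ (the paper packages exactly these majorants in Lemma~\ref{lem:heatEST} and the growth estimate of Theorem~\ref{thm:kerest}). The route differs in one organizational respect that is worth flagging. The paper does not verify the identity $Tf(x)=\int K(x,y)f(y)\,d\mu_\a^+(y)$ head-on; instead it tests both sides against a dense class of product functions $h(x,t)=h_1(x)h_2(t)$ with $\supp h_1\cap\supp f=\varnothing$, using the already-established $L^2(d\mu_\a^+)$-boundedness of the operator (Proposition~\ref{pro:L2b}) to swap integrals on one side and the kernel growth bound on the other, and only at the last step reduces to the pointwise-in-$t$ identity $\partial_t^m\delta_{\eta,n,\om}\mathfrak{T}_t^{\a,\eta,+}f(x)=\int_{\RRR}\partial_t^m\delta_{\eta,n,\om,x}\mathfrak{G}_t^{\a,\eta,+}(x,y)f(y)\,d\mu_\a^+(y)$, proved by dominated convergence. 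This duality wrapper quietly handles two points that your direct approach glosses over. First, your recasting of the spectrally defined Riesz transform as $\frac{1}{\Gamma(|n|/2)}\int_0^\infty t^{|n|/2-1}\delta_{\eta,n,\om}\mathfrak{T}_t^{\a,\eta,+}f\,dt$ cannot be read as an absolutely convergent $L^2$-valued Bochner integral: by \eqref{A}--\eqref{B} the integrand has $L^2$-norm of order $t^{-1}$ near $t=0^+$ for general $f$, so the identity holds only as an improper ($L^2$-limit) integral, or weakly after pairing with a test function supported away from $\supp f$ — which is where the absolute convergence you invoke (via $q_+\ge\|x-y\|^2>0$) actually becomes available. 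Second, for the $\mathbb{B}$-valued cases (L-t.I), (L-t.IV), (L-t.V), knowing the displayed identity for each fixed $t$ does not immediately identify the $\mathbb{B}$-valued Bochner integral $\int_{\RRR}K(x,y)f(y)\,d\mu_\a^+(y)$ with the family $\{\partial_t^m\delta_{\eta,n,\om}\mathfrak{T}_t^{\a,\eta,+}f(x)\}_{t>0}$ as an element of $\mathbb{B}=L^2(\R_+,t^{|n|+2m-1}dt)$, since point evaluation in $t$ is not a continuous functional on $\mathbb{B}$; one pairs with $h_2\in C_c^\infty(\R_+)$ and applies Fubini, which is precisely the paper's device. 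Both points are repaired by standard arguments, so your proof is viable, but a complete write-up would end up reproducing the paper's duality step for the vector-valued operators.
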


\begin{proof}
The reasoning is fairly standard. In the cases of $\mathfrak{U}^{\a,\eta,+}$ and $\mathfrak{R}_{n,\om}^{\a,\eta,+}$ we can proceed as in \cite{NoSt2,NoSt3} since the arguments given there are actually valid for all $\a\in(-1,\infty)^d$ provided that the same is true about the standard estimates. Similarly, in the cases of~$\mathfrak{K}_{\psi}^{\a,\eta,+}$ and $\mathfrak{K}_{\nu}^{\a,\eta,+}$ we can proceed as in~\cite{TZS2} since an analogous remark applies.
Finally, in the cases of~$\mathfrak{H}_{n,m,\om}^{\a,\eta,+}$ and~$\mathfrak{S}_{n,m,\om}^{\a,\eta,+}$ we can proceed as in~\cite{TZS1,TZS3} since, again, the same remark is in force. To be precise, in~\cite{TZS3} only some special cases of the present operators are covered but the arguments used there apply also in the more general situation of mixed $g$-functions and mixed Lusin area integrals considered in this paper. For readers' convenience we now sketch the proof of the association in case of the $g$-functions. Further details and the other cases are left to the reader.

Let $\mathbb{B} = L^2(\R_+,t^{|n|+2m-1}dt)$. By density arguments it is enough to show that
\begin{gather*}
 \Big\langle\big\{ \partial_t^m \delta_{\eta, n, \om}
\mathfrak{T}_t^{\alpha,\eta, +}f \big\}_{t>0},h\Big\rangle_{L_{\mathbb{B}}^2(d\mu_\a^+) }\\
\qquad{} =
\bigg\langle\int_{\RRR}\big\{ \partial_t^m \delta_{\eta, n, \om, x}
\mathfrak{G}_t^{\alpha, \eta, +}(x,y) \big\}_{t>0}
f(y)\,d\mu_{\a}^+ (y),h\bigg\rangle_{L_{\mathbb{B}}^2(d\mu_\a^+) }
\end{gather*}
for every $f\in C^{\infty}_c(\RRR)$ and $h(x,t)=h_{1}(x)h_{2}(t)$ such that $h_{1}\in C^{\infty}_c(\RRR)$, $h_{2}\in C^{\infty}_c(\mathbb{R}_{+})$ and $\supp f\cap \supp h_{1}=\varnothing$ (notice that the linear span of functions $h$ of this form is dense in $L_{\mathbb{B}}^2\big((\supp f)^{C},d\mu_{\a}^+\big)$). We f\/irst deal with the left-hand side of the desired identity. Using the $L^2(d\mu_\a^+)$-boundedness of $\mathfrak{g}^{\alpha,\eta,+}_{n,m,\om}$ we may change the order of integration and obtain
\begin{gather*}
\int_{0}^{\infty} t^{|n|+2m-1} \overline{h_{2}(t)} \int_{\RRR} \partial_t^m \delta_{\eta, n, \om} \mathfrak{T}_t^{\alpha,\eta, +}f (x)
\overline{h_{1}(x)} \, d\mu_{\a}^+ (x)\,dt.
\end{gather*}

On the other hand, using Fubini's theorem (its application is legitimate in view of the growth condition for the kernel $\mathfrak{H}^{\alpha, \eta, +}_{n,m,\om}(x,y)$) and changing the order of integration, we see that the right-hand side in question equals
\begin{gather*}
\int_{0}^{\infty} t^{|n|+2m-1} \overline{h_{2}(t)}\int_{\RRR} \int_{\RRR}
\partial_t^m \delta_{\eta, n, \om, x} \mathfrak{G}_t^{\alpha, \eta, +}(x,y) f(y)
\,d\mu_{\a}^+(y) \overline{h_{1}(x)} \,d\mu_{\a}^+ (x) \,dt.
\end{gather*}
Therefore, to f\/inish the reasoning, it suf\/f\/ices to verify that
\begin{gather*}
\partial_t^m \delta_{\eta, n, \om} \mathfrak{T}_t^{\alpha,\eta, +}f (x)=
\int_{\RRR} \partial_t^m \delta_{\eta, n, \om, x} \mathfrak{G}_t^{\alpha, \eta, +}(x,y) f(y)
\,d\mu_{\a}^+(y), \qquad x \in \RRR, \qquad t > 0,
\end{gather*}
for each $f\in C^{\infty}_c(\RRR)$. This, however, can be done by using the dominated convergence theorem and the estimates obtained in Lemma~\ref{lem:heatEST} below.
\end{proof}

Finally, we state the central technical result of our approach.
\begin{Theorem}\label{thm:kerest}
Let $\a \in (-1, \infty)^d$ and $\eta \in \{ 0 , 1 \}^d$. Then the kernels {\rm (L-t.I)--(L-t.V)} listed above satisfy the standard estimates with the relevant Banach spaces $\mathbb{B}$. More precisely, the kernels \mbox{{\rm (L-t.I)--(L-t.IV)}} satisfy the smoothness conditions with $\gamma = 1$, and the kernel {\rm (L-t.V)} satisfies~\eqref{sm1} and~\eqref{sm2} with any $\gamma \in (0,1/2]$ such that $\gamma < \min\limits_{1 \le i \le d} (\a_i + 1)$.
\end{Theorem}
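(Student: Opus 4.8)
The plan is to follow the by-now-standard technique of kernel estimates developed in \cite{NoSt2,NoSz,TZS1,TZS2,TZS3}, arranged so as to cover all $\a \in (-1,\infty)^d$ through the representation \eqref{IRL}. First I would unify the five families of kernels. Each is produced from the single building block $\mathfrak{G}_t^{\a,\eta,+}(x,y) = (xy)^\eta G_t^{\a+\eta}(x,y)$ by applying a fixed operator -- some composition of $\partial_t^m$ with $\delta_{\eta,n,\om,x}$ (built from the factors $\om^i_j\,\partial_{i,\overline{\eta_i+j-1}}+x_i$, with $\partial_{i,\eta_i}$ as in \eqref{pp}) -- and then either integrating in $t$ against $t^{|n|/2-1}\,dt$, $\psi(t)\,dt$ or $d\nu(t)$ (the scalar cases {\rm (L-t.II)}--{\rm (L-t.III)}), or recording the whole $t$-profile as an element of the relevant $\B$ (the vector-valued cases {\rm (L-t.I)}, {\rm (L-t.IV)}, {\rm (L-t.V)}). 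It therefore suffices to produce, once and for all, pointwise bounds on $\partial_t^m \delta_{\eta,n,\om,x}\mathfrak{G}_t^{\a,\eta,+}(x,y)$ and on its $x$- and $y$-gradients, with explicit, integrable dependence on $t$ (equivalently on $\z=\tanh t$, so that $\partial_t=(1-\z^2)\partial_\z$); the five assertions then follow by integrating or by taking $\B$-norms.

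Second, insert the $\Pi$-representation \eqref{IRL} and compute the action of these derivatives on the integrand $\e(\z,q_\pm)$. Since the $x_i$-derivatives hit only $q_\pm$, the prefactor powers $(xy)^{\eta+2\eps}$, and the factors $((1-\z^2)/2\z)^{\,\cdots}$, each differentiation yields a finite sum of terms of the shape (an explicit rational function of $\z$) times (a polynomial in $x,y,s$) times $\e(\z,q_\pm)$, still integrated against $\Pi_{\a+\eta+\mathbf{1}+\eps}(ds)$. The structural point that makes the negative-multiplicity case of \cite{NoSz} work is that the factors $x_i$ brought down by $\delta$ pair with the Bessel-type prefactor to keep all powers of the coordinates nonnegative and controlled. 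The elementary estimates for $\e$ and $q_\pm$ -- the lower bounds relating $q_\pm$ to $\|x-y\|^2$ together with the Gaussian decay of $\e(\z,q_\pm)$ -- then reduce each term to a one-variable integral in $\z$, estimated by the technical lemmas of Section~\ref{sec:ker} (cf.\ Lemma~\ref{lem:heatEST}). The measure factor $1/\mu_\a^+(B(x,\|x-y\|))$ in \eqref{gr} emerges from the ball/volume comparison built into those lemmas.

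Third, for the smoothness estimates I would, in the scalar cases {\rm (L-t.II)}--{\rm (L-t.III)} and the vector-valued cases {\rm (L-t.I)}, {\rm (L-t.IV)}, differentiate once more in $x$ or $y$ and verify the gradient bound \eqref{grad}, which delivers \eqref{sm1} and \eqref{sm2} with $\gamma=1$. The extra derivative brings down a factor that, after the same $\z$-integration, contributes exactly one additional power $1/\|x-y\|$ in the bound. One must check that the $t$-integrations defining {\rm (L-t.II)}--{\rm (L-t.III)} are absorbed cleanly -- in particular the $t^{|n|/2-1}$ weight near $t=0$ and the integrability \eqref{assum} of $d|\nu|$ near $t=0$ -- but this is routine once the pointwise integrand bounds are in hand.

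Finally, the genuinely hard part is the Lusin-area kernel {\rm (L-t.V)}, where $\B=L^2(A,t^{|n|+2m-1}dz\,dt)$ and the kernel carries the shift $\mathbf{x}=x+z$ and the weight $\sqrt{\Xi_\a(x,z,t)}$ from \eqref{def:Xi}. The obstruction is twofold: the square-integration over the cone $A$ in the $z$-variable caps the attainable H\"older exponent at $\gamma\le 1/2$, while the ratio $(x_i+z_i)^{2\a_i+1}/V^{\a_i,+}_{\sqrt t}(x_i)$ inside $\Xi_\a$ is sensitive to the boundary behaviour of $\mu_\a^+$ and forces $\gamma<\min_{1\le i\le d}(\a_i+1)$. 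I expect the principal difficulty to be the difference estimate \eqref{sm1} in $x$: one must compare the shifted kernels at $x$ and $x'$ while simultaneously controlling the two distinct normalisations $V^{\a_i,+}_{\sqrt t}(x_i)$ and $V^{\a_i,+}_{\sqrt t}(x_i')$ within the $L^2_{dz\,dt}$ norm, splitting the $t$-integral according to whether $\sqrt t$ is smaller or larger than $\|x-y\|$ and interpolating the gradient bound against the trivial pointwise bound to trade one full derivative for the admissible exponent $\gamma$. Here I would take the first-order scheme of \cite{TZS3} as a template and push it through for arbitrary $|n|+m$, keeping explicit track of how the two constraints $\gamma\le 1/2$ and $\gamma<\min_i(\a_i+1)$ arise.
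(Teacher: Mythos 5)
For the kernels (L-t.I)--(L-t.IV) your plan coincides with the paper's: a single master pointwise bound on $\partial_y^r\partial_x^l\partial_t^m\delta_{\eta,n,\om,x}\mathfrak{G}_t^{\a,\eta,+}(x,y)$ obtained from \eqref{IRL} via Leibniz' rule and Fa\`a di Bruno's formula (Lemma~\ref{lem:heatEST}), followed by a ``bridge'' lemma converting the resulting $\z$-profiles into $\|x-y\|^{-u}\mu_\a^+(B(x,\|x-y\|))^{-1}$ after taking the appropriate $L^p(\R_+,t^{W-1}dt)$ norm (Lemma~\ref{lem:bridge}), with smoothness obtained from the mean value theorem together with the comparison Lemmas~\ref{lem:theta} and~\ref{lem:double}. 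That part of your proposal is essentially the paper's argument.

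For (L-t.V), however, the mechanism you sketch is not the one that works, and the difference matters. The paper does not split the $t$-integral at $\|x-y\|^2$, nor does it interpolate a gradient bound against a pointwise bound. It splits the cone $A$ into four pieces according to whether $x+z$ and $x'+z$ lie in $\RRR$, and on the main piece applies the triangle inequality to separate (i) the difference of the unweighted kernels multiplied by $\sqrt{\Xi_\a(x',z,t)}$ from (ii) the kernel at $x+z$ multiplied by $|\sqrt{\Xi_\a(x,z,t)}-\sqrt{\Xi_\a(x',z,t)}|$. Term (i) keeps the full first-order gain $\|x-x'\|$; the fractional exponent $\gamma$ comes entirely from term (ii) and from the indicator-mismatch piece ($x+z\in\RRR$ but $x'+z\notin\RRR$), both controlled by the direct integral estimates of Lemma~\ref{lem:intLusin}(b) and~(c). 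Your proposal to ``interpolate the gradient bound against the trivial pointwise bound'' presupposes a usable $x$-derivative of $\sqrt{\Xi_\a(\cdot,z,t)}$, and that is precisely what fails uniformly near the boundary of $\RRR$ when some $\a_i<-1/2$; this obstruction is the source of the condition $\gamma<\min\limits_{1\le i\le d}(\a_i+1)$ and is circumvented, not overcome, by the $L^2(dz)$ estimates just cited. You also never address the indicator-mismatch term, which is a genuine nonzero contribution requiring Lemma~\ref{lem:intLusin}(c). So the overall architecture is right, but the Lusin-area step as you describe it would not go through as written.
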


The proof of Theorem~\ref{thm:kerest}, which is the most technical part of the paper, is located in Section~\ref{sec:ker}.

\subsection{Laguerre-symmetrized setting} \label{ssec:L-s}

The main objects of our interest in this subsection are the following operators, which are def\/ined initially in $L^2(d\mu_{\a})$.
\begin{description}\itemsep=0pt
\item[{\rm (L-s.I)}] The Laguerre-symmetrized heat semigroup maximal operator
\begin{gather*}
\mathbb{T}_{*}^{\alpha}f = \big\| \mathbb{T}_t^{\alpha}f\big\|_{L^{\infty}(\R_+,dt)}.
\end{gather*}
\item[{\rm (L-s.II)}] Riesz--Laguerre-symmetrized transforms of order $|n|>0$
\begin{gather*}
\mathbb{R}_{n}^{\alpha}f = \sum_{k \in \N^d}\big( \lambda_{|\lfloor (k+\boldsymbol{1})/2 \rfloor |}^{\a} \big)^{-|n|\slash 2}
	\langle f , \Phi_k^{\a} \rangle_{d\mu_{\alpha}}\, \mathbb{D}^{n} \Phi_k^{\a},
\end{gather*}
where $n \in \N^d \setminus \{\boldsymbol{0}\}$.
\item[{\rm (L-s.III)}] Multipliers of Laplace and Laplace--Stieltjes transform types
\begin{gather*}
\mathbb{M}^{\alpha}_{\mathfrak{m}} f = \sum_{k \in \N^d}\mathfrak{m}( \lambda_{|\lfloor (k+\boldsymbol{1})/2 \rfloor |}^{\a} )
		\langle f , \Phi_k^{\a} \rangle_{d\mu_{\alpha}}\, \Phi_k^{\a},
\end{gather*}
where $\mathfrak{m}$ is as in (L-D.III).
\item[{\rm (L-s.IV)}] Littlewood--Paley--Stein type mixed $g$-functions
\begin{gather*}
\mathbb{g}^{\alpha}_{n,m}(f) = \big\| \partial_t^m \mathbb{D}^{n}\mathbb{T}_t^{\alpha}f \big\|_{L^2(\R_+,t^{|n|+2m-1}dt)},
\end{gather*}
where $n \in \N^d$, $m \in \N$ are such that $|n|+m>0$.
\item[{\rm (L-s.V)}] Mixed Lusin area integrals
\begin{gather*}
\mathbb{S}^{\a}_{n,m}(f)(x)=\left( \int_{A(x)} t^{|n| + 2m -1} \big|\partial_{t}^m \mathbb{D}^{n}
\mathbb{T}_t^{\alpha}f (z)\big|^{2}\frac{d\mu_{\a}(z) \, dt}
{V_{\sqrt{t}}^{\a}(x)} \right)^{1\slash 2},
\end{gather*}
where $n \in \N^d$ and $m \in \N$ are such that $|n|+m>0$, and $A(x)$, $V_{t}^{\a}(x)$ are def\/ined in~\eqref{def:A} and~\eqref{def:V}, respectively.
\end{description}
The series def\/ining $\mathbb{R}_{n}^{\alpha}$ and $\mathbb{M}^{\alpha}_{\mathfrak{m}}$ converge in $L^2(d\mu_\a)$ and produce $L^2(d\mu_\a)$-bounded operators. This is immediate in case of $\mathbb{M}^{\alpha}_{\mathfrak{m}}$ since the values of~$\mathfrak{m}$ stay bounded. For $\mathbb{R}_{n}^{\alpha}$ it follows from~\eqref{C} and~\eqref{D}. Moreover, the formulas def\/ining $\mathbb{T}^{\alpha}_{*}f$, $\mathbb{g}^{\alpha}_{n,m}(f)$ and $\mathbb{S}^{\a}_{n,m}(f)$, understood in a~pointwise way, are valid (the series/integral def\/ining $\mathbb{T}_t^{\alpha}f(x)$ converges and produces a smooth function of $(x,t)\in \RR \times\mathbb{R}_{+}$) for $f \in L^{p}(Wd\mu_\a)$, $W\in A_{p}^{\a}$, $1 \le p<\infty$; see Proposition~\ref{pro:conv}.

Our main result in the Laguerre-symmetrized framework reads as follows.
\begin{Theorem}\label{Sthm:main}
Assume that $\a \in(-1,\infty)^d$ and $W$ is a weight on $\RR$ invariant under the reflections $\sigma_1, \ldots ,\sigma_d$. Then the Riesz--Laguerre-symmetrized transforms \emph{(L-s.II)} and the multipliers of Laplace and Laplace--Stieltjes transform types \emph{(L-s.III)} extend to bounded linear operators on $L^{p}(Wd\mu_\a)$, $W\in A_{p}^{\a}$, $1<p<\infty$, and from $L^1 (Wd\mu_\a)$ to weak $L^1 (Wd\mu_\a)$, $W\in A_{1}^{\a}$. Furthermore, the Laguerre-symmetrized heat semigroup maximal operator \emph{(L-s.I)}, the mixed $g$-functions \emph{(L-s.IV)} and the mixed Lusin area integrals \emph{(L-s.V)} are bounded sublinear operators on $L^{p}(Wd\mu_\a)$, $W\in A_{p}^{\a}$, $1<p<\infty$, and from $L^1 (Wd\mu_\a)$ to weak $L^1 (Wd\mu_\a)$, $W\in A_{1}^{\a}$.
\end{Theorem}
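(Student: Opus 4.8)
The plan is to run the same machinery developed for Theorem~\ref{thm:main}, exploiting that the Laguerre-symmetrized setting differs from the Laguerre--Dunkl one only by a benign spectral shift. First I would pass to the smaller space $(\RRR,\mu_\a^+,\|\cdot\|)$: for each $\eta\in\{0,1\}^d$ I introduce auxiliary Laguerre-symmetrized operators on $L^2(d\mu_\a^+)$, obtained by restricting (L-s.I)--(L-s.V) to $\eta$-symmetric functions, exactly as (L-t.I)--(L-t.V) were obtained from (L-D.I)--(L-D.V); denote the corresponding auxiliary heat kernels by $\mathbb{G}_t^{\a,\eta,+}$. I would then state the analogue Theorem~\ref{Sthm:main+} for these operators and reduce Theorem~\ref{Sthm:main} to it by the symmetry arguments of~\cite{NoSt4,TZS3}, precisely as Theorem~\ref{thm:main} was reduced to Theorem~\ref{thm:main+}.

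The observation that makes everything transfer is the following. If $\overline{k}=\eta$, a direct computation with $\lambda_n^\a=4n+2|\a|+2d$ gives $\lambda_{|\lfloor(k+\boldsymbol{1})/2\rfloor|}^\a=\lambda_{|k|/2}^\a+2|\eta|$, so on the $\eta$-symmetric sector $\mathbb{L}_\a=\mathfrak{L}_\a+2|\eta|$, whence the auxiliary heat kernels factor as
\begin{gather*}
\mathbb{G}_t^{\a,\eta,+}(x,y)=e^{-2|\eta|t}\,\mathfrak{G}_t^{\a,\eta,+}(x,y).
\end{gather*}
Moreover, evaluating $\mathbb{D}_i$ on an $\eta$-symmetric function yields the first-order operator $\partial_{i,\eta_i}+(-1)^{\eta_i}x_i$, which is of the same structural type as the building blocks $\om_j^i\partial_{i,\cdot}+x_i$ of the auxiliary Laguerre--Dunkl derivatives $\delta_{\eta,n,\om}$; iterating and tracking the parity flips shows that $\mathbb{D}^n$ on the $\eta$-sector is a composition of such first-order operators. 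Consequently the kernels attached to (L-s.I)--(L-s.V) arise from their (L-t.I)--(L-t.V) counterparts by inserting the factor $e^{-2|\eta|t}$ into the $t$-integrals.

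With these identifications I would prove Theorem~\ref{Sthm:main+} by showing the auxiliary operators are (vector-valued) Calder\'on--Zygmund operators in the sense of $(\RRR,\mu_\a^+,\|\cdot\|)$ (the analogue Theorem~\ref{Sthm:CZ}), which splits into $L^2$-boundedness, kernel association, and standard kernel estimates (Theorem~\ref{Sthm:kerest}) exactly as in Propositions~\ref{pro:L2b} and~\ref{pro:kerassoc} and Theorem~\ref{thm:kerest}. The $L^2$ bounds go through verbatim with~\eqref{C} and~\eqref{D} in place of~\eqref{A} and~\eqref{B}, since the coefficient bound in~\eqref{D} matches the growth of $\lambda_{|\lfloor(k+\boldsymbol{1})/2\rfloor|}^\a$; the maximal operator is even dominated pointwise by its Laguerre--Dunkl counterpart because $e^{-2|\eta|t}\le 1$, and the kernel association is routine following~\cite{TZS3}.

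The main obstacle, as before, is Theorem~\ref{Sthm:kerest}, the standard estimates for the auxiliary kernels; here the closeness to the Laguerre--Dunkl kernels pays off, so the growth and difference bounds reduce to those already established in the proof of Theorem~\ref{thm:kerest} once the factor $e^{-2|\eta|t}$ is taken into account. The one genuinely new point is the $t$-differentiation in the $g$-function and Lusin area kernels: by the Leibniz rule $\partial_t^m\big[e^{-2|\eta|t}\mathfrak{G}_t^{\a,\eta,+}\big]$ is a bounded-coefficient combination of $e^{-2|\eta|t}\partial_t^j\mathfrak{G}_t^{\a,\eta,+}$ with $j\le m$, so lower-order $t$-derivatives appear. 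These are controlled by the decay $e^{-2|\eta|t}$ when $|\eta|>0$ (and the setting collapses to the Laguerre one when $\eta=\boldsymbol{0}$, where nothing new is needed), so the relevant $L^2(\R_+,t^{|n|+2m-1}dt)$- and $L^2(A,t^{|n|+2m-1}dzdt)$-norms are bounded by the same expressions as in Theorem~\ref{thm:kerest}. This turns Theorem~\ref{Sthm:kerest} into a technical repackaging of the estimates from Section~\ref{sec:ker} rather than a fresh computation.
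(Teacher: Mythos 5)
Your proposal is correct and follows essentially the same route as the paper: reduction to auxiliary operators on $(\RRR,\mu_\a^+,\|\cdot\|)$, the identity $\mathbb{G}_t^{\a,\eta,+}=e^{-2|\eta|t}\mathfrak{G}_t^{\a,\eta,+}$ (which the paper exploits in the proof of Lemma~\ref{lem:heatEST}), $L^2$-bounds via \eqref{C}--\eqref{D} and pointwise domination of the maximal operator, and standard kernel estimates obtained by repackaging the Laguerre--Dunkl ones. The only cosmetic difference is that the lower-order $t$-derivatives produced by Leibniz' rule are harmless simply because $e^{-2|\eta|t}\le 1$ and less negative powers of $\zeta\in(0,1)$ are dominated by the ones required in Lemma~\ref{lem:bridge}, rather than by any genuine decay of $e^{-2|\eta|t}$.
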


It is worth mentioning that an analogue of Theorem~\ref{Sthm:main} in the one-dimensional framework of Jacobi trigonometric polynomial expansions was proved recently by Langowski \cite{L1,L3}, though without including Lusin area integrals; see \cite[Theorem~2.1]{L1} and \cite[Theorem~3.1]{L3}. Apart from that, no other symmetrized settings seem to have been studied earlier from a similar perspective.

The proof of Theorem~\ref{Sthm:main} can be reduced to showing analogous mapping properties for certain, suitably def\/ined, auxiliary Laguerre-type operators emerging from those introduced above and related to the smaller space $(\RRR,\mu_\a^+,\|\cdot\|)$. To proceed, for each $\eta \in \{ 0,1 \}^d$ we consider an auxiliary semigroup of operators acting initially on $L^2(d\mu_\a^+)$ and given by
\begin{gather*}
\mathbb{T}_t^{\a,\eta,+}f = \big( \mathbb{T}_t^{\a} f^\eta \big)^+=
\sum_{k \in \N^d, \, \overline{k} = \eta}
e^{-t \lambda_{|\lfloor (k+\boldsymbol{1})/2 \rfloor |}^{\a} } \langle f^{\eta} , \Phi_k^{\a}
\rangle_{d\mu_{\alpha}}\, \big( \Phi_k^{\a} \big)^+,
\qquad t > 0;
\end{gather*}
observe that $\mathbb{T}_t^{\a} f^\eta$ is $\eta$-symmetric, which means that the quantity $\mathbb{T}_t^{\a,\eta,+}f$ is well def\/ined. It is straightforward to show that these auxiliary semigroups have the integral representations
\begin{gather*}
\mathbb{T}_t^{\a,\eta,+} f (x) =\int_{\RRR} \mathbb{G}_t^{\a,\eta,+} (x,y) f(y) \, d\mu_\a^+(y), \qquad x\in \RRR, \qquad t>0,\\
\mathbb{G}_t^{\a,\eta,+} (x,y) = \exp\big(-2|\eta|t\big) (xy)^\eta G_t^{\a + \eta} (x,y), \qquad x,y \in \RRR, \qquad t>0.
\end{gather*}
Further, these series/integral formulas provide a good def\/inition of $\mathbb{T}_t^{\a,\eta,+}$ on weighted $L^p$ spaces for a large class of weights and produce always smooth functions of $(x,t) \in \RRR \times \mathbb{R}_+$, see Proposition~\ref{pro:conv}. Again, note that choosing $\eta_0=\boldsymbol{0}$ we have $\mathbb{T}_t^{\a,\eta_0,+} =T^{\a}_t$.

For $\eta \in \{ 0,1 \}^d$ and $n \in \N^d$ we denote
\begin{gather*}
\dersym_{\eta,n}=\dersym_{d,\eta_d, n_d} \circ \cdots \circ \dersym_{1,\eta_1, n_1},\qquad \dersym_{i,\eta_i, n_i}
= \dersym_{i, \overline{\eta_i + n_i - 1} }\circ \cdots \circ \dersym_{i, \overline{\eta_i + 1} } \circ\dersym_{i, \overline{\eta_i} } ,
\end{gather*}
(by convention, $\dersym_{i,\eta_i, 0} = \Id$), where for $i \in \{1, \ldots, d \}$ and $\eta_i \in \{0, 1\}$ we put, see \eqref{pp},
\begin{gather*}
\dersym_{i, \eta_i } = \partial_{i, \eta_i } + (-1)^{\eta_i}x_i.
\end{gather*}
Notice that the derivative $\dersym_{\eta, n}$ corresponds to the action of $\mathbb{D}^{n}$ on $\eta$-symmetric functions. Precisely, if $f$ is $\eta$-symmetric, then $\mathbb{D}^{n} f = \dersym_{\eta, n} f$. Further, we may also think that $\dersym_{\eta, n}$ acts on functions def\/ined on the smaller space $\RRR$.

Now we are ready to introduce the auxiliary Laguerre-type operators, which are def\/ined initially in $L^2(d\mu_\a^+)$. For each $\eta \in \{0,1\}^d$ we def\/ine the following objects.
\begin{description}\itemsep=0pt
\item[{\rm (L-t$'$.I)}] The Laguerre-type heat semigroup maximal operator
\begin{gather*}
\mathbb{T}_{*}^{\alpha,\eta, +} f =\big\| \mathbb{T}_t^{\alpha,\eta, +} f\big\|_{L^{\infty}(\R_+,dt)}.
\end{gather*}
\item[{\rm (L-t$'$.II)}] Laguerre-type Riesz transforms of order $|n|>0$
\begin{gather*}
\mathbb{R}_{n}^{\alpha,\eta, +} f = \sum_{ k \in \N^d, \, \overline{k} = \eta}
\big( \lambda_{|\lfloor (k+\boldsymbol{1})/2 \rfloor |}^{\a} \big)^{-|n|\slash 2}
	\langle f^\eta , \Phi_k^{\a} \rangle_{d\mu_{\alpha}}
\big( \dersym_{\eta, n} \Phi_k^{\a} \big)^+,
\end{gather*}
where $n \in \N^d \setminus \{\boldsymbol{0}\}$. Observe that if $\overline{k} = \eta$, then $\dersym_{\eta, n} \Phi_k^{\a} = \mathbb{D}^{n} h_k^{\a}$ is $(\overline{k + n})$-symmetric and hence the quantity $\big( \dersym_{\eta, n} \Phi_k^{\a} \big)^+$ is well def\/ined.
\item[{\rm (L-t$'$.III)}] Multipliers of Laplace and Laplace--Stieltjes transform types
\begin{gather*}
\mathbb{M}^{\alpha, \eta, +}_{\mathfrak{m}} f = \sum_{ k \in \N^d, \, \overline{k} = \eta}
\mathfrak{m}\big( \lambda_{|\lfloor (k+\boldsymbol{1})/2 \rfloor |}^{\a} \big)	\langle f^\eta , \Phi_k^{\a} \rangle_{d\mu_{\alpha}}
\big( \Phi_k^{\a} \big)^+,
\end{gather*}
where $\mathfrak{m}$ is as in (L-D.III).
\item[{\rm (L-t$'$.IV)}] Littlewood--Paley--Stein type mixed $g$-functions
\begin{gather*}
\mathbb{g}^{\alpha,\eta,+}_{n,m}(f) = \big\| \partial_t^m \dersym_{\eta, n} \mathbb{T}_t^{\alpha,\eta, +}f \big\|_{L^2(\R_+,t^{|n|+2m-1}dt)},
\end{gather*}
where $n \in \N^d$, $m \in \N$ are such that $|n|+m>0$.
\item[{\rm (L-t$'$.V)}] Mixed Lusin area integrals
\begin{gather*}
\mathbb{S}^{\a,\eta,+}_{n,m}(f)(x)= \left( \int_{A(x)\cap \R^{d+1}_+} t^{|n| + 2m -1} \big|\partial_{t}^m
\dersym_{\eta, n} \mathbb{T}_t^{\alpha,\eta, +} f (z) \big|^{2}\frac{d\mu_{\a}^+(z) \, dt}
{V_{\sqrt{t}}^{\a,+}(x)} \right)^{1\slash 2},
\end{gather*}
where $n \in \N^d$, $m \in \N$ are such that $|n|+m>0$, and $A(x)$, $V_{t}^{\a,+}(x)$ are def\/ined in~\eqref{def:A} and~\eqref{def:V+}, respectively.
\end{description}

Notice that the Laguerre-type Lusin area integrals can be written as
\begin{gather*}
\mathbb{S}^{\a,\eta,+}_{n,m}(f)(x)=\big\| \partial_t^m
\dersym_{\eta, n} \mathbb{T}_t^{\alpha,\eta, +} f (x+z) \sqrt{\Xi_{\a}(x,z,t)}
 \chi_{\{x+z \in \RRR \}}\big\|_{L^2(A,t^{ |n| + 2m - 1} dzdt)},
\end{gather*}
where $\Xi_{\a}$ is def\/ined in \eqref{def:Xi}. We note that the series def\/ining $\mathbb{R}_{n}^{\alpha,\eta,+}$ and $\mathbb{M}^{\alpha,\eta,+}_{\mathfrak{m}}$ converge in $L^2(d\mu_\a^+)$ and produce $L^2(d\mu_\a^+)$-bounded operators. This follows from the $L^2(d\mu_\a)$-boundedness of $\mathbb{R}_{n}^{\alpha}$ and $\mathbb{M}^{\alpha}_{\mathfrak{m}}$, for symmetry reasons. Further, the formulas def\/ining $\mathbb{T}^{\alpha,\eta,+}_{*}f$, $\mathbb{g}^{\alpha,\eta,+}_{n,m}(f)$ and $\mathbb{S}^{\a,\eta,+}_{n,m}(f)$ make a pointwise sense for general functions~$f$, see Proposition~\ref{pro:conv}.

Arguments similar to those given in \cite[p.~6]{NoSt4} and \cite[pp.~1522--1524]{TZS3} allow us to reduce the proof of Theorem~\ref{Sthm:main} to showing the following.

\begin{Theorem}\label{Sthm:main+}
Assume that $\a \in(-1,\infty)^d$ and $\eta \in \{ 0, 1\}^d$. Then the Laguerre-type operators {\rm (L-t$'$.II)}, {\rm (L-t$'$.III)} extend to bounded linear operators on $L^{p}(Ud\mu^+_{\a})$, $U \in A_{p}^{\a,+}$, $1<p<\infty$, and from $L^{1}(Ud\mu^+_{\a})$ to weak $L^{1}(Ud\mu^+_{\a})$, $U \in A_{1}^{\a,+}$.
Furthermore, the sublinear operators {\rm (L-t$'$.I)}, {\rm (L-t$'$.IV)} and {\rm (L-t$'$.V)} are bounded on $L^{p}(Ud\mu^+_{\a})$, $U \in A_{p}^{\a,+}$, $1<p<\infty$, and from $L^1(Ud\mu^+_{\a})$ to weak $L^1(Ud\mu^+_{\a})$, $U \in A_{1}^{\a,+}$.
\end{Theorem}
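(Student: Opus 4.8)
The plan is to prove Theorem~\ref{Sthm:main+} along exactly the lines of Theorem~\ref{thm:main+}, by showing that the auxiliary operators (L-t$'$.I)--(L-t$'$.V) are (vector-valued) Calderón--Zygmund operators in the sense of the space of homogeneous type $(\RRR,\mu_\a^+,\|\cdot\|)$; this is the symmetrized counterpart Theorem~\ref{Sthm:CZ} of Theorem~\ref{thm:CZ}. Granting that, the weighted $L^p$ bounds for $1<p<\infty$ and the weak type $(1,1)$ follow from the general Calderón--Zygmund theory on spaces of homogeneous type with vector-valued kernels, by the same arguments recalled after the statement of Theorem~\ref{thm:main+}. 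The structural fact that makes the entire symmetrized analysis run parallel to the Laguerre--Dunkl one is that the two heat kernels differ only by a bounded exponential factor, $\mathbb{G}_t^{\a,\eta,+}(x,y)=e^{-2|\eta|t}\,\mathfrak{G}_t^{\a,\eta,+}(x,y)$, both being assembled from $G_t^{\a+\eta}$ through the representation~\eqref{IRL}.

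As in the Dunkl case, verifying the Calderón--Zygmund property splits into three tasks. First I would establish $L^2(d\mu_\a^+)$-boundedness (the analogue of Proposition~\ref{pro:L2b}): for $\mathbb{R}_n^{\a,\eta,+}$ and $\mathbb{M}_{\mathfrak{m}}^{\a,\eta,+}$ this is already recorded above, coming from~\eqref{C}, \eqref{D} and Parseval's identity in the first case and from the boundedness of $\mathfrak{m}$ in the second; for $\mathbb{T}_*^{\a,\eta,+}$ one uses the pointwise domination $\mathbb{T}_*^{\a,\eta,+}f(x)\le x^\eta\, T_*^{\a+\eta}(y^{-\eta}f)(x)$ (discarding $e^{-2|\eta|t}\le 1$) together with the $L^2$-boundedness of the Laguerre maximal operator $T_*^{\a+\eta}$; for $\mathbb{g}_{n,m}^{\a,\eta,+}$ one differentiates the spectral series, applies~\eqref{C}, \eqref{D} and Parseval, and evaluates the resulting $t$-integral; finally $\mathbb{S}_{n,m}^{\a,\eta,+}$ reduces to $\mathbb{g}_{n,m}^{\a,\eta,+}$ via Lemma~\ref{lem:intLusin}(a).

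Second, I would identify the integral kernels in the Calderón--Zygmund sense (the analogue of Proposition~\ref{pro:kerassoc}). These kernels are the obvious symmetrized versions of (L-t.I)--(L-t.V), obtained by replacing $\mathfrak{G}_t^{\a,\eta,+}$ with $\mathbb{G}_t^{\a,\eta,+}$ and the derivative $\delta_{\eta,n,\om}$ with $\dersym_{\eta,n}$; the association is then obtained by the same density-and-Fubini argument as in Proposition~\ref{pro:kerassoc}, whose only inputs are the $L^2$-boundedness above and pointwise heat-kernel bounds from the analogue of Lemma~\ref{lem:heatEST}.

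The hard part will be the third task, the standard estimates for these kernels, that is Theorem~\ref{Sthm:kerest}; as in the Dunkl setting this is the most technical ingredient. Two points need care. The exponential prefactor $e^{-2|\eta|t}$ is harmless: it is bounded by $1$, and each $t$-differentiation only multiplies by a bounded quantity, so it can be absorbed without affecting any growth or smoothness bound. More serious is that the symmetrized derivative $\dersym_{\eta,n}$, built from $\dersym_{i,\eta_i}=\partial_{i,\eta_i}+(-1)^{\eta_i}x_i$, has a different algebraic form than $\delta_{\eta,n,\om}$, so the differentiations of the kernel under the integral sign in~\eqref{IRL} must be redone from scratch. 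Still, the technique of kernel estimates from~\cite{NoSz,NoSt2}, refined in~\cite{TZS1,TZS3}, applies, and it is precisely the representation~\eqref{IRL} that keeps the argument valid for all $\a\in(-1,\infty)^d$, negative multiplicities included; I expect the Lusin area kernel (L-t$'$.V) to satisfy only the weaker smoothness~\eqref{sm1}--\eqref{sm2} with $\gamma\in(0,1/2]$ and $\gamma<\min_{1\le i\le d}(\a_i+1)$, exactly as in Theorem~\ref{thm:kerest}. With Theorem~\ref{Sthm:kerest} in hand, Theorem~\ref{Sthm:CZ}, and hence Theorem~\ref{Sthm:main+}, follows.
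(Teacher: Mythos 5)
Your proposal is correct and follows essentially the same route as the paper: Theorem~\ref{Sthm:main+} is reduced to the Calder\'on--Zygmund statement Theorem~\ref{Sthm:CZ}, which splits into the $L^2$-boundedness (Proposition~\ref{Spro:L2b}, proved exactly as you describe, with $\mathbb{T}_*^{\a,\eta,+}$ dominated pointwise by $\mathfrak{T}_*^{\a,\eta,+}|f|$), the kernel association (Proposition~\ref{Spro:kerassoc}), and the standard estimates (Theorem~\ref{Sthm:kerest}). The only refinement worth noting is that the paper does not redo the kernel differentiations from scratch: Lemma~\ref{lem:heatEST} treats both kernels at once by using the heat equation to write $\dersymstate_{\eta,n,x}\mathbb{G}_t^{\a,\eta,+}(x,y)$ as $e^{-2|\eta|t}$ times a polynomial combination of $\partial_t^p\partial_{\eta,x}^q\mathfrak{G}_t^{\a,\eta,+}(x,y)$, so the symmetrized standard estimates become a verbatim repetition of the Laguerre--Dunkl ones.
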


To prove Theorem~\ref{Sthm:main+} we will use the general Calder\'on--Zygmund theory. We will just show that the Laguerre-type operators (L-t$'$.I)--(L-t$'$.V) are (vector-valued) Calder\'on--Zygmund operators in the sense of the space of homogeneous type $(\RRR,\mu_\a^+,\|\cdot\|)$, see the comment following the statement of Theorem~\ref{thm:main+}. The result below implies Theorem~\ref{Sthm:main+} and thus also Theorem~\ref{Sthm:main}.

\begin{Theorem}\label{Sthm:CZ}
Assume that $\a \in (-1,\infty)^{d}$ and $\eta\in \{ 0 , 1 \}^d$. The Laguerre-type Riesz transforms~{\rm (L-t$'$.II)} and the multipliers of Laplace and Laplace--Stieltjes transform types {\rm (L-t$'$.III)} are scalar-valued Calder\'on--Zygmund operators in the sense of the space $(\RRR,\mu_{\a}^+,\|\cdot\|)$.
Furthermore, the Laguerre-type heat semigroup maximal operator \emph{(L-t'.I)}, the mixed $g$-functions \mbox{{\rm (L-t$'$.IV)}} and the mixed Lusin area integrals {\rm (L-t$'$.V)} can be viewed as vector-valued Calder\'on--Zygmund operators in the sense of $(\RRR,\mu_{\a}^+,\|\cdot\|)$ associated with the Banach spaces $\mathbb{B}=C_0$, $\mathbb{B}=L^2(\R_+,t^{|n|+2m-1}dt)$ and $\mathbb{B}=L^2(A,t^{|n|+2m-1}dzdt)$, respectively.
\end{Theorem}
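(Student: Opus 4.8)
The plan is to mirror the tripartite structure used for Theorem~\ref{thm:CZ}. That is, I would prove Theorem~\ref{Sthm:CZ} by combining three ingredients: (a) the $L^2(d\mu_\a^+)$-boundedness of the Laguerre-type operators (L-t$'$.I)--(L-t$'$.V); (b) their association, in the Calder\'on--Zygmund sense, with the kernels built from $\mathbb{G}_t^{\a,\eta,+}$ and the derivatives $\dersym_{\eta,n}$; and (c) the standard growth and smoothness estimates for those kernels in the sense of the space of homogeneous type $(\RRR,\mu_\a^+,\|\cdot\|)$. Items (a) and (b) are the direct analogues of Propositions~\ref{pro:L2b} and~\ref{pro:kerassoc}, while item (c) is the content of Theorem~\ref{Sthm:kerest}, to be proved in Section~\ref{sec:ker}.

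The organizing observation, which makes the programme go through with only cosmetic changes from the Laguerre--Dunkl case, is the factorization
\begin{gather*}
\mathbb{G}_t^{\a,\eta,+}(x,y) = e^{-2|\eta|t}\, \mathfrak{G}_t^{\a,\eta,+}(x,y),
\end{gather*}
so the symmetrized heat kernel differs from its Dunkl counterpart only by the scalar factor $e^{-2|\eta|t}$, which together with all of its $t$-derivatives is bounded on $\R_+$ uniformly in $x,y$ and does not depend on $x,y$. For item (a), each operator is bounded on $L^2(d\mu_\a^+)$ by restricting the global operator (L-s.I)--(L-s.V) to $\eta$-symmetric functions, exactly as in Proposition~\ref{pro:L2b}: the maximal operator factors as $\mathbb{T}_*^{\a,\eta,+}f(x)\le x^\eta T_*^{\a+\eta}(y^{-\eta}f)(x)$ (since $e^{-2|\eta|t}\le 1$), the $g$-function bound reduces to a Parseval computation using~\eqref{C} and~\eqref{D}, and the area-integral case is deduced from the $g$-function case via the analogue of Lemma~\ref{lem:intLusin}(a). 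For item (b), the verification that each operator acts on $x\notin\supp f$ by integration against its listed kernel follows from the same Fubini and dominated-convergence argument as in Proposition~\ref{pro:kerassoc}, once pointwise heat-kernel bounds (the symmetrized analogue of Lemma~\ref{lem:heatEST}) are available; these are inherited from the Dunkl bounds after multiplication by $e^{-2|\eta|t}\le 1$.

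The real work, and the main obstacle, is item (c), namely Theorem~\ref{Sthm:kerest}. Here I would reduce to the Dunkl estimates of Theorem~\ref{thm:kerest} by exploiting the factorization above. For the maximal-operator kernel the extra factor is harmless: being bounded by $1$ and independent of $x,y$, it leaves the $C_0$-valued growth and gradient bounds unchanged. For the Riesz, multiplier, $g$-function and Lusin kernels one differentiates the product $e^{-2|\eta|t}\mathfrak{G}_t^{\a,\eta,+}$ in $t$ by the Leibniz rule and in the spatial variables through $\dersym_{\eta,n}$ (which commutes with the $t$-factor). Since $\partial_t^j e^{-2|\eta|t}=(-2|\eta|)^j e^{-2|\eta|t}$ contributes only bounded constants and a decaying exponential, each resulting term is dominated by a corresponding term already controlled in the Dunkl setting, and the $t$-integrals and $L^2(t^{|n|+2m-1}dt)$-norms converge at least as fast as before. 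The building blocks of $\dersym_{\eta,n}$ and of $\delta_{\eta,n,\om}$ have the same shape, a first-order operator $\pm\partial_{i,\cdot}\pm x_i$ differing only in the placement of signs, which is immaterial for the size estimates, so the kernel-estimate technique of~\cite{NoSz} applies verbatim. I expect the genuinely delicate case to be, as in Theorem~\ref{thm:kerest}, the Lusin-area kernel (L-t$'$.V), where the smoothness exponent must be taken with $\gamma\in(0,1/2]$ and $\gamma<\min_{1\le i\le d}(\a_i+1)$; the weight factor $\sqrt{\Xi_\a(x,z,t)}$ together with the integration over the cone forces the same careful region splitting and the same use of Bessel-function asymptotics as in the Laguerre--Dunkl computation.
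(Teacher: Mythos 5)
Your proposal is correct and follows essentially the same route as the paper: the proof is split into the $L^2(d\mu_\a^+)$-boundedness (Proposition~\ref{Spro:L2b}, via pointwise domination by the Laguerre--Dunkl operators and restriction to $\eta$-symmetric functions), the kernel association (Proposition~\ref{Spro:kerassoc}, by the same Fubini/dominated-convergence argument), and the standard estimates (Theorem~\ref{Sthm:kerest}), which the paper likewise reduces to the Laguerre--Dunkl case through the factorization $\mathbb{G}_t^{\a,\eta,+}=e^{-2|\eta|t}\mathfrak{G}_t^{\a,\eta,+}$ built into Lemma~\ref{lem:heatEST}. The only cosmetic difference is that the paper absorbs the factor $e^{-2|\eta|t}$ already at the level of the unified derivative bounds in Lemma~\ref{lem:heatEST} (via the heat equation and Leibniz' rule), rather than as a separate Leibniz step in the kernel estimates.
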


Proving Theorem~\ref{Sthm:CZ} splits into showing the following three results.

\begin{Proposition}\label{Spro:L2b}
Let $\a\in(-1,\infty)^d$ and $\eta\in \{ 0 , 1 \}^d$. Then the Laguerre-type operators from Theorem~{\rm \ref{Sthm:CZ}} are bounded on $L^2(d\mu_{\a}^{+})$.
\end{Proposition}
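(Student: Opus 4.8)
The plan is to follow the proof of Proposition~\ref{pro:L2b} almost verbatim, since the Laguerre-symmetrized objects differ from their Laguerre--Dunkl counterparts only in the labelling of eigenvalues and in the harmless exponential factor appearing in $\mathbb{G}_t^{\a,\eta,+}$. First I note that the $L^2(d\mu_\a^+)$-boundedness of the two linear operators $\mathbb{R}_n^{\a,\eta,+}$ and $\mathbb{M}_{\mathfrak{m}}^{\a,\eta,+}$ has already been recorded in the comment preceding Theorem~\ref{Sthm:main+}; it is inherited from the $L^2(d\mu_\a)$-boundedness of $\mathbb{R}_n^\a$ and $\mathbb{M}_{\mathfrak{m}}^\a$ by restriction to $\eta$-symmetric functions. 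Hence only the three sublinear operators $\mathbb{T}_*^{\a,\eta,+}$, $\mathbb{g}_{n,m}^{\a,\eta,+}$ and $\mathbb{S}_{n,m}^{\a,\eta,+}$ require attention.

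For the maximal operator I would argue directly from the kernel $\mathbb{G}_t^{\a,\eta,+}(x,y)=\exp(-2|\eta|t)(xy)^\eta G_t^{\a+\eta}(x,y)$. Since $0<\exp(-2|\eta|t)\le 1$, the identity $\mathbb{T}_t^{\a,\eta,+}f(x)=\exp(-2|\eta|t)\,x^\eta T_t^{\a+\eta}(y^{-\eta}f)(x)$ yields the pointwise bound $\mathbb{T}_*^{\a,\eta,+}f(x)\le x^\eta T_*^{\a+\eta}(y^{-\eta}f)(x)$, where $T_*^\a$ is the Laguerre heat semigroup maximal operator. Because $T_*^{\a+\eta}$ is bounded on $L^2(d\mu_{\a+\eta}^+)$ (cf.\ \cite[Theorem~4.1]{NoSz}), the weight change $d\mu_{\a+\eta}^+=x^{2\eta}\,d\mu_\a^+$ gives $\|\mathbb{T}_*^{\a,\eta,+}f\|_{L^2(d\mu_\a^+)}=\|T_*^{\a+\eta}(y^{-\eta}f)\|_{L^2(d\mu_{\a+\eta}^+)}\lesssim\|y^{-\eta}f\|_{L^2(d\mu_{\a+\eta}^+)}=\|f\|_{L^2(d\mu_\a^+)}$. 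The extra factor $\exp(-2|\eta|t)$ is the only difference from the Laguerre--Dunkl computation, and it merely turns that equality into an inequality in the favourable direction.

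For the $g$-function I would first reduce, by restriction to $\eta$-symmetric functions, to proving the $L^2(d\mu_\a)$-boundedness of $\mathbb{g}_{n,m}^\a$ on the whole space. Differentiating the spectral series for $\mathbb{T}_t^\a f$ term by term (legitimate as in the proof of Proposition~\ref{pro:conv}) and invoking the action formula~\eqref{C} gives $\partial_t^m \mathbb{D}^n \mathbb{T}_t^\a f=\sum_{k}\big(-\lambda^\a_{|\lfloor(k+\boldsymbol{1})/2\rfloor|}\big)^m \rho_n(k)\,e^{-t\lambda^\a_{|\lfloor(k+\boldsymbol{1})/2\rfloor|}}\langle f,\Phi_k^\a\rangle_{d\mu_\a}\,\Phi^\a_{k-(-\boldsymbol{1})^k\overline{n}}$. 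The shift $k\mapsto k-(-\boldsymbol{1})^k\overline{n}$ is an involution, hence injective, on the index set where $\rho_n(k)\neq 0$, so Parseval applies after squaring; combined with the coefficient bound~\eqref{D} it produces $\|\mathbb{g}_{n,m}^\a(f)\|_{L^2(d\mu_\a)}^2\lesssim\int_0^\infty\big(\sum_k (\lambda^\a_{|\lfloor(k+\boldsymbol{1})/2\rfloor|})^{|n|+2m} e^{-2t\lambda^\a_{|\lfloor(k+\boldsymbol{1})/2\rfloor|}}|\langle f,\Phi_k^\a\rangle|^2\big)\,t^{|n|+2m-1}\,dt$. Interchanging sum and integral, evaluating $\int_0^\infty \lambda^{|n|+2m}e^{-2t\lambda}t^{|n|+2m-1}\,dt=\Gamma(|n|+2m)\,2^{-(|n|+2m)}$, a constant independent of $\lambda$, and applying Parseval once more gives the desired bound. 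Finally, the $L^2(d\mu_\a^+)$-boundedness of $\mathbb{S}_{n,m}^{\a,\eta,+}$ follows from that of $\mathbb{g}_{n,m}^{\a,\eta,+}$ through the equivalence $\|\mathbb{S}_{n,m}^{\a,\eta,+}(f)\|_{L^2(d\mu_\a^+)}\simeq\|\mathbb{g}_{n,m}^{\a,\eta,+}(f)\|_{L^2(d\mu_\a^+)}$ supplied by Lemma~\ref{lem:intLusin}(a).

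I do not expect any genuine obstacle here: the entire argument is a transcription of Proposition~\ref{pro:L2b} with $\lambda^\a_{|k|/2}$, $\tau_\om^\a(k)$ and \eqref{A}--\eqref{B} replaced by $\lambda^\a_{|\lfloor(k+\boldsymbol{1})/2\rfloor|}$, $\rho_n(k)$ and \eqref{C}--\eqref{D}. The only points demanding a moment's care are the justification of term-by-term differentiation and of the interchange of summation and integration in the $g$-function estimate; both are controlled by the polynomial coefficient growth~\eqref{D} together with the exponential decay of the heat factors, exactly the manipulations certified in the proof of Proposition~\ref{pro:conv}.
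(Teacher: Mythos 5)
Your proof is correct and follows essentially the same route as the paper: the Riesz transforms and multipliers are dispatched by the preceding remarks, the $g$-function is handled by rerunning the Parseval argument with \eqref{C}--\eqref{D} in place of \eqref{A}--\eqref{B}, and the area integral is reduced to the $g$-function via Lemma~\ref{lem:intLusin}(a). The only cosmetic difference is in the maximal operator: the paper dominates $\mathbb{T}_*^{\a,\eta,+}f$ pointwise by $\mathfrak{T}_*^{\a,\eta,+}|f|$ and cites Proposition~\ref{pro:L2b}, whereas you unwind that same kernel computation directly, using $0<e^{-2|\eta|t}\le 1$; both rest on the identical bound for $T_*^{\a+\eta}$.
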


\begin{proof}
The $L^2(d\mu_{\a}^+)$-boundedness of $\mathbb{R}_n^{\a,\eta,+}$ and $\mathbb{M}_{\mathfrak{m}}^{\a,\eta,+}$ is already justif\/ied, see the comments preceding Theorem~\ref{Sthm:main+}. Since $\mathbb{T}_*^{\a,\eta,+}f$ is controlled pointwise by $\mathfrak{T}_*^{\a,\eta,+} |f|$ (this is immediately seen by comparing the associated integral kernels), the $L^2(d\mu_{\a}^+)$-boundedness of $\mathbb{T}_*^{\a,\eta,+}$ follows from Proposition~\ref{pro:L2b}. The square functions are dealt with similarly as their Laguerre--Dunkl counterparts in the proof of Proposition~\ref{pro:L2b}. More precisely, the case of $\mathbb{S}_{n,m}^{\a,\eta,+}$ is reduced to~$\mathbb{g}_{n,m}^{\a,\eta,+}$ with the aid of Lemma~\ref{lem:intLusin}(a). On the other hand, the $L^2(d\mu_{\a}^+)$-boundedness of $\mathbb{g}_{n,m}^{\a,\eta,+}$ is a consequence of the $L^2(d\mu_{\a})$-boundedness of $\mathbb{g}_{n,m}^{\a}$ (via restricting to $\eta$-symmetric functions). The latter property is verif\/ied in an analogous way to the same for $\mathfrak{g}_{n,m,\om}^{\a}$, see the proof of Proposition~\ref{pro:L2b}, where now one should use~\eqref{C} and~\eqref{D} instead of~\eqref{A} and~\eqref{B}.
\end{proof}

Formal computations suggest that the Laguerre-type operators in question are associated with the following kernels related to appropriate Banach spaces $\mathbb{B}$.
\begin{description}\itemsep=0pt
\item[{\rm (L-t$'$.I)}] The kernel associated with the Laguerre-type heat semigroup maximal operator,
\begin{gather*}
\mathbb{U}^{\alpha, \eta, +}(x,y) = \big\{\mathbb{G}_t^{\alpha,\eta,+}(x,y)\big\}_{t>0}, \qquad \mathbb{B}= C_0 \subset L^{\infty}(\R_+,dt).
\end{gather*}
\item[{\rm (L-t$'$.II)}] The kernels associated with the Laguerre-type Riesz transforms,
\begin{gather*}
\mathbb{R}_{n}^{\alpha,\eta,+}(x,y) = \frac{1}{\Gamma(|n|\slash 2)} \int_0^{\infty} \dersym_{\eta, n, x}
\mathbb{G}_t^{\alpha, \eta, +}(x,y)
	t^{|n|\slash 2 -1}\, dt, \qquad \mathbb{B}=\mathbb{C},
\end{gather*}
where $n \in \N^d \setminus \{\boldsymbol{0}\}$.
\item[{\rm (L-t$'$.IIIa)}] The kernels associated with the Laplace transform type multipliers,
\begin{gather*}
\mathbb{K}^{\alpha, \eta, +}_{\psi}(x,y)= - \int_0^{\infty} \psi(t) \partial_t \mathbb{G}_t^{\alpha, \eta, +}(x,y)\, dt, \qquad \mathbb{B}=\mathbb{C},
\end{gather*}
where $\psi \in L^{\infty}(\R_+,dt)$.
\item[{\rm (L-t$'$.IIIb)}] The kernels associated with the Laplace--Stieltjes transform type multipliers,
\begin{gather*}
\mathbb{K}^{\alpha, \eta, +}_{\nu}(x,y) = \int_{\R_+} \mathbb{G}_t^{\alpha, \eta, +}(x,y)\, d\nu(t), \qquad \mathbb{B}=\mathbb{C},
\end{gather*}
where $\nu$ is a signed or complex Borel measure on $\R_+$ with the total variation $|\nu|$ satis\-fying~\eqref{assum}.
\item[{\rm (L-t$'$.IV)}] The kernels associated with the mixed $g$-functions,
\begin{gather*}
\mathbb{H}^{\alpha, \eta, +}_{n,m}(x,y) = \big\{ \partial_t^m \dersym_{\eta, n, x}
	\mathbb{G}_t^{\alpha, \eta, +}(x,y) \big\}_{t>0}, \qquad \mathbb{B} = L^2(\R_+,t^{|n|+2m-1}dt),
\end{gather*}
where $n \in \N^d$, $m \in \N$ are such that $|n|+m>0$.
\item[{\rm (L-t$'$.V)}] The kernels associated with the mixed Lusin area integrals
\begin{gather*}
\mathbb{S}^{\alpha, \eta, +}_{n,m}(x,y) =
\left\{
\partial_t^m \dersym_{\eta, n, \mathbf{x}} \mathbb{G}_{t}^{\a,\eta,+}(\mathbf{x},y)
\Big|_{\mathbf{x} = x+z} \sqrt{\Xi_{\a}(x,z,t)}
\chi_{\{x+z\in\RRR\}} \right\}_{(z,t) \in A}
\end{gather*}
with $\mathbb{B} = L^2(A,t^{|n|+2m-1}dzdt)$, where $n \in \N^d$, $m \in \N$ are such that $|n|+m>0$.
\end{description}

\begin{Proposition}\label{Spro:kerassoc}
Assume that $\a \in (-1, \infty)^d$ and $\eta \in \{ 0 , 1 \}^d$. Then the Laguerre-type operators {\rm (L-t$'$.I)--(L-t$'$.V)} are associated, in the Calder\'on--Zygmund theory sense, with the corresponding kernels listed above.
\end{Proposition}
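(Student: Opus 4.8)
The plan is to follow the proof of Proposition~\ref{pro:kerassoc} almost verbatim, exploiting the close structural parallel between the Laguerre-symmetrized and Laguerre--Dunkl settings. From the present viewpoint the only differences between the two families of auxiliary objects are cosmetic: the kernel $\mathbb{G}_t^{\a,\eta,+}(x,y) = \exp(-2|\eta|t)(xy)^\eta G_t^{\a+\eta}(x,y)$ differs from $\mathfrak{G}_t^{\a,\eta,+}(x,y)$ only by the bounded, smooth, rapidly decaying factor $\exp(-2|\eta|t)$; the symmetrized derivative $\dersym_{\eta,n}$ plays exactly the role of $\delta_{\eta,n,\om}$; and the eigenvalues $\lambda_{|\lfloor(k+\boldsymbol{1})/2\rfloor|}^{\a}$ replace $\lambda_{|k|/2}^{\a}$. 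None of these alters the reasoning, since the association is a statement about integral representation off the diagonal rather than about the fine structure of the spectrum.

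Accordingly, for $\mathbb{U}^{\a,\eta,+}$ and $\mathbb{R}_n^{\a,\eta,+}$ I would proceed as in \cite{NoSt2,NoSt3}, for $\mathbb{K}_\psi^{\a,\eta,+}$ and $\mathbb{K}_\nu^{\a,\eta,+}$ as in \cite{TZS2}, and for $\mathbb{H}_{n,m}^{\a,\eta,+}$ and $\mathbb{S}_{n,m}^{\a,\eta,+}$ as in \cite{TZS1,TZS3}. In each case the arguments of those references are valid for all $\a\in(-1,\infty)^d$ once the standard estimates are known, and the requisite estimates will be supplied by Theorem~\ref{Sthm:kerest}. As in the Laguerre--Dunkl case, \cite{TZS3} covers only special instances of the present square functions, but the reasoning there extends to the general mixed $g$-functions and mixed Lusin area integrals without change.

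To illustrate, take the $g$-functions with $\mathbb{B}=L^2(\R_+,t^{|n|+2m-1}dt)$. By density it suffices to verify, for $f\in C_c^\infty(\RRR)$ and product test functions $h(x,t)=h_1(x)h_2(t)$ with $h_1\in C_c^\infty(\RRR)$, $h_2\in C_c^\infty(\mathbb{R}_+)$ and $\supp f\cap\supp h_1=\varnothing$, the pairing identity asserting that the $\mathbb{B}$-valued operator agrees with integration against $\mathbb{H}_{n,m}^{\a,\eta,+}(x,y)$ off the diagonal. Using the $L^2(d\mu_\a^+)$-boundedness of $\mathbb{g}^{\a,\eta,+}_{n,m}$ from Proposition~\ref{Spro:L2b} on the operator side, and Fubini's theorem on the kernel side (legitimate by the growth bound for $\mathbb{H}_{n,m}^{\a,\eta,+}$ furnished by Theorem~\ref{Sthm:kerest}), both sides reduce to the same iterated integral. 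The matter then comes down to the pointwise identity
\[
\partial_t^m \dersym_{\eta,n}\mathbb{T}_t^{\a,\eta,+}f(x)=\int_{\RRR}\partial_t^m\dersym_{\eta,n,x}\mathbb{G}_t^{\a,\eta,+}(x,y)f(y)\,d\mu_\a^+(y),\qquad x\in\RRR,\quad t>0,
\]
whose justification—interchanging $\partial_t^m\dersym_{\eta,n}$ with the integral—follows from dominated convergence together with the relevant analogue of Lemma~\ref{lem:heatEST} for the symmetrized kernel. The remaining operators are handled by the same template.

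The only genuine work, exactly as in the Laguerre--Dunkl case, is the pointwise control of $\mathbb{G}_t^{\a,\eta,+}$ and its $(t,x)$-derivatives needed to license the dominated-convergence and Fubini steps. Since the extra factor $\exp(-2|\eta|t)$ is harmless (indeed it only improves decay in $t$), these bounds follow immediately from the heat-kernel estimates already available for $G_t^{\a+\eta}$. I therefore expect no conceptual obstacle here; the main difficulty is purely notational, namely carrying the symmetrized derivatives $\dersym_{\eta,n}$ through the same manipulations, and it is fully resolved once the standard estimates of Theorem~\ref{Sthm:kerest} are in hand.
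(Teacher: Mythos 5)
Your proposal is correct and follows essentially the same route as the paper, which simply refers back to the proof of Proposition~\ref{pro:kerassoc} and leaves the details to the reader; your spelled-out $g$-function case matches the paper's template exactly. The only cosmetic remark is that no separate ``analogue'' of Lemma~\ref{lem:heatEST} is needed, since that lemma as stated already bounds the derivatives of $\mathbb{G}_t^{\a,\eta,+}$ alongside those of $\mathfrak{G}_t^{\a,\eta,+}$.
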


\begin{proof}
Here the reasoning is similar to that in the Laguerre--Dunkl setting, see the proof of Proposition~\ref{pro:kerassoc} where the case of $g$-functions is explained. We leave details to the reader.
\end{proof}

\begin{Theorem}\label{Sthm:kerest}
Let $\a \in (-1, \infty)^d$ and $\eta \in \{ 0 , 1 \}^d$. Then the kernels {\rm (L-t$'$.I)--(L-t$'$.V)} listed above satisfy the standard estimates
with the relevant Banach spaces $\mathbb{B}$. More precisely, the kernels {\rm (L-t$'$.I)--(L-t$'$.IV)} satisfy the smoothness conditions with $\gamma = 1$, and the kernel {\rm (L-t$'$.V)} satisfies~\eqref{sm1} and~\eqref{sm2} with any $\gamma \in (0,1/2]$ such that $\gamma < \min\limits_{1 \le i \le d} (\a_i + 1)$.
\end{Theorem}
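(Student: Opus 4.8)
The plan is to deduce Theorem~\ref{Sthm:kerest} from the already established Laguerre--Dunkl kernel estimates of Theorem~\ref{thm:kerest}, rather than to repeat the whole machinery of Section~\ref{sec:ker}. Two structural observations make this reduction possible. First, comparing the heat kernels one has, for every $\eta\in\{0,1\}^d$,
\[
\mathbb{G}_t^{\a,\eta,+}(x,y)=e^{-2|\eta|t}\,\mathfrak{G}_t^{\a,\eta,+}(x,y),\qquad x,y\in\RRR,\quad t>0,
\]
so the symmetrized kernel differs from the Laguerre--Dunkl one only by the $x,y$-independent factor $e^{-2|\eta|t}\in(0,1]$. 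Second, the symmetrized derivative is, up to a sign, a particular Laguerre--Dunkl derivative: using $(-1)^{\overline k}=(-1)^k$ and the elementary identity $\partial_{i,p}+(-1)^p x_i=(-1)^p\big((-1)^p\partial_{i,p}+x_i\big)$ for a parity $p\in\{0,1\}$ (see \eqref{pp}), each building block $\dersym_{i,\overline{\eta_i+\ell-1}}$ equals $(-1)^{\eta_i+\ell-1}$ times the Laguerre--Dunkl block $\big(\om^i_\ell\partial_{i,\overline{\eta_i+\ell-1}}+x_i\big)$ with $\om^i_\ell=(-1)^{\eta_i+\ell-1}$. Composing over $\ell$ and $i$ yields
\[
\dersym_{\eta,n}=c_{\eta,n}\,\delta_{\eta,n,\om(\eta,n)},\qquad c_{\eta,n}\in\{-1,1\},
\]
for the explicit sign pattern $\om(\eta,n)$ just described. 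Since $c_{\eta,n}$ is a fixed scalar, it plays no role in any of the standard estimates.

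With these two facts the cases (L-t$'$.I), (L-t$'$.II) and (L-t$'$.III) follow at once. For the maximal kernel, $\|\mathbb{U}^{\a,\eta,+}(x,y)\|_{C_0}=\sup_{t>0}e^{-2|\eta|t}|\mathfrak{G}_t^{\a,\eta,+}(x,y)|\le\|\mathfrak{U}^{\a,\eta,+}(x,y)\|_{C_0}$, and the same domination holds for the $x$- and $y$-differences since $e^{-2|\eta|t}$ is independent of $x,y$; the growth and smoothness bounds with $\gamma=1$ are thus inherited verbatim from Theorem~\ref{thm:kerest}. For the Riesz kernel, the identity above gives $\dersym_{\eta,n,x}\mathbb{G}_t^{\a,\eta,+}=c_{\eta,n}\,e^{-2|\eta|t}\,\delta_{\eta,n,\om,x}\mathfrak{G}_t^{\a,\eta,+}$, so inserting $e^{-2|\eta|t}\le1$ into the defining $t$-integral and arguing exactly as for $\mathfrak{R}_{n,\om}^{\a,\eta,+}(x,y)$ yields the estimates. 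For the multipliers I would write $\mathbb{K}_\nu^{\a,\eta,+}(x,y)=\int_{\R_+}\mathfrak{G}_t^{\a,\eta,+}(x,y)\,d\tilde\nu(t)$ with $d\tilde\nu(t)=e^{-2|\eta|t}d\nu(t)$, which remains admissible in the sense of~\eqref{assum} because $\lambda_0^\a>0$, and, after the Leibniz split $\partial_t\big(e^{-2|\eta|t}\mathfrak{G}_t\big)=e^{-2|\eta|t}\partial_t\mathfrak{G}_t-2|\eta|e^{-2|\eta|t}\mathfrak{G}_t$, express $\mathbb{K}_\psi^{\a,\eta,+}$ as a Laplace-type kernel with symbol $\psi e^{-2|\eta|t}\in L^{\infty}(\R_+,dt)$ plus a constant multiple of an admissible Laplace--Stieltjes-type kernel; both summands are standard by Theorem~\ref{thm:kerest}.

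The square-function cases (L-t$'$.IV) and (L-t$'$.V) need one extra, routine step, because here $\partial_t^m$ also hits the factor $e^{-2|\eta|t}$. By the Leibniz rule,
\[
\partial_t^m\,\dersym_{\eta,n,x}\mathbb{G}_t^{\a,\eta,+}=c_{\eta,n}\sum_{j=0}^m\binom{m}{j}(-2|\eta|)^{m-j}\,e^{-2|\eta|t}\,\partial_t^{\,j}\delta_{\eta,n,\om,x}\mathfrak{G}_t^{\a,\eta,+},
\]
a finite sum, each term of which carries the $t$-weight $t^{|n|+2m-1}$. When $|\eta|=0$ only the $j=m$ term survives and the kernel coincides with its Laguerre--Dunkl counterpart, so nothing is to be proved; when $|\eta|>0$ I would reconcile the mismatch between the weights $t^{|n|+2m-1}$ and $t^{|n|+2j-1}$ through $\sup_{t>0}e^{-2|\eta|t}t^{2(m-j)}<\infty$, which reduces the $\mathbb{B}=L^2(\R_+,t^{|n|+2m-1}dt)$-norm of each term, and of its $x$-/$y$-differences, to a constant multiple of the Laguerre--Dunkl $g$-kernel estimate of order $(n,j)$ from Theorem~\ref{thm:kerest}. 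The Lusin area kernel (L-t$'$.V) is handled identically once it is written as $\big\{\partial_t^m\dersym_{\eta,n,\mathbf{x}}\mathbb{G}_t^{\a,\eta,+}\big|_{\mathbf{x}=x+z}\sqrt{\Xi_\a(x,z,t)}\,\chi_{\{x+z\in\RRR\}}\big\}_{(z,t)\in A}$, the factor $\sqrt{\Xi_\a}$ and the domain $A$ being exactly as in $\mathfrak{S}_{n,m,\om}^{\a,\eta,+}$; this is where the weaker exponent $\gamma\in(0,1/2]$ with $\gamma<\min_{1\le i\le d}(\a_i+1)$ enters, carried over unchanged from Theorem~\ref{thm:kerest}.

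The main obstacle is that essentially all of the analytic substance sits in Theorem~\ref{thm:kerest} (the estimates proved in Section~\ref{sec:ker} from the representation~\eqref{IRL} and the exponential control of $\e(\z,q_\pm)$); granting that, the present statement is genuinely a corollary, and I expect no new kernel computation. The single point demanding care is the bookkeeping in the square-function cases: one must check that every Leibniz term, in particular the exceptional lowest-order term with $n=\boldsymbol{0}$ and $j=0$ (where $\delta_{\eta,\boldsymbol{0},\om,x}=\Id$ and one must bound $\int_0^\infty e^{-4|\eta|t}\,|\mathfrak{G}_t^{\a,\eta,+}(x,y)|^2\,t^{2m-1}\,dt$), is genuinely controlled. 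This follows from the Laguerre--Dunkl maximal-kernel growth and smoothness bounds for $\mathfrak{G}_t^{\a,\eta,+}$ together with the convergence $\int_0^\infty e^{-4|\eta|t}t^{2m-1}\,dt<\infty$ (finite precisely because $|\eta|>0$ in this case), so that $e^{-2|\eta|t}$ supplies exactly the $t$-decay that the missing higher-order heat-kernel derivatives would otherwise have provided. The only thing left to verify is that all these bounds are uniform in $(x,y)$, which they are.
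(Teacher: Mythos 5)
Your reduction is correct, but it is organized differently from the paper's argument. The paper does not deduce Theorem~\ref{Sthm:kerest} from Theorem~\ref{thm:kerest} as a black box: instead, Lemma~\ref{lem:heatEST} is stated so as to bound $\big|\partial_y^r\partial_x^l\partial_t^m\dersym_{\eta,n,x}\mathbb{G}_t^{\a,\eta,+}(x,y)\big|$ by \emph{the same} right-hand side as its Laguerre--Dunkl counterpart (internally this uses exactly your factorization $\mathbb{G}_t^{\a,\eta,+}=e^{-2|\eta|t}\mathfrak{G}_t^{\a,\eta,+}$, written there as $\dersym_{\eta,n,x}\mathbb{G}_t^{\a,\eta,+}=e^{-2\eta t}\sum P_{n,p,q,\eta}(x)\,\partial_t^p\partial_{\eta,x}^q\mathfrak{G}_t^{\a,\eta,+}$ in dimension one), so the whole proof of Theorem~\ref{thm:kerest} applies verbatim with the correct powers of $\z$ already in place and no Leibniz bookkeeping in $t$ is ever needed. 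Your route instead treats Theorem~\ref{thm:kerest} as the input and pays for it with the binomial expansion of $\partial_t^m\big(e^{-2|\eta|t}\,\cdot\,\big)$, the weight reconciliation $\sup_t e^{-4|\eta|t}t^{2(m-j)}<\infty$, and the exceptional term $(n,j)=(\boldsymbol{0},0)$; your sign identity $\dersym_{\eta,n}=c_{\eta,n}\,\delta_{\eta,n,\om(\eta,n)}$ and the treatment of (L-t$'$.I)--(L-t$'$.III) are all correct. The one place where the reduction is not a pure citation of Theorem~\ref{thm:kerest} is the exceptional term in the Lusin case (L-t$'$.V): there the object to control is $\big\{e^{-2|\eta|t}\mathfrak{G}_t^{\a,\eta,+}(x+z,y)\sqrt{\Xi_{\a}(x,z,t)}\,\chi_{\{x+z\in\RRR\}}\big\}_{(z,t)\in A}$ in $L^2(A,t^{2m-1}dzdt)$, and the ``maximal-kernel growth and smoothness bounds for $\mathfrak{G}_t^{\a,\eta,+}$'' you invoke are bounds at $(x,y)$, not at the shifted point $(x+z,y)$ with the $\Xi_{\a}$ weight; you need the $z$-uniform pointwise bound (the inequality labelled \eqref{ineq4} in the paper, which is valid at order $(\boldsymbol{0},0)$) together with Lemma~\ref{lem:intLusin} and the $A_1$--$A_4$ splitting to get the growth and both smoothness estimates for that term. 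This is available with no new computation, but it means dipping into the proof of the $\mathfrak{S}^{\a,\eta,+}_{n,m,\om}$ case rather than only its statement; with that caveat spelled out, your argument is complete and arguably cleaner to read than a literal repetition.
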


For the proof of Theorem~\ref{Sthm:kerest} see the end of Section~\ref{sec:ker}.

\subsection{Laguerre setting} \label{ssec:lag}

In this subsection we state new results in the Laguerre setting that are mostly implied by our Laguerre--Dunkl and Laguerre-symmetrized results. Recall that $\delta_i$, $i=1,\ldots,d$ (but not $\delta_i^*$), are the appropriate f\/irst order derivatives in the Laguerre context. Thus, at f\/irst glance, a natural choice of higher-order derivatives is simply
\begin{gather*}
\delta^n = \delta_d^{n_d} \circ \cdots \circ \delta_1^{n_1}, \qquad n \in \N^d.
\end{gather*}
Higher-order Riesz--Laguerre transforms and mixed $g$-functions involving $\delta^n$ were investigated by the authors in \cite{NoSt2,NoSz}. However, as it was pointed out in~\cite{NoSt5} by the f\/irst and second authors, seemingly even more natural higher-order derivatives in this situation are the
interlaced derivatives
\begin{gather*}
D^n = \big(\underbrace{\cdots \delta_d \delta_d^* \delta_d \delta_d^* \delta_d}_{n_d\;
	\textrm{components}} \big)
	\circ
	\cdots
	\circ
	\big(\underbrace{\cdots \delta_2 \delta_2^* \delta_2 \delta_2^* \delta_2}_{n_2\; \textrm{components}} \big)
	\circ
	\big(\underbrace{\cdots \delta_1 \delta_1^* \delta_1 \delta_1^* \delta_1}_{n_1\; \textrm{components}} \big).
\end{gather*}
Therefore we now consider the following operators def\/ined via $D^n$ and given initially in $L^2(d\mu_{\a}^+)$.
\begin{description}\itemsep=0pt
\item[{\rm (L.II)}] Riesz--Laguerre transforms of order $|n|>0$
\begin{gather*}
{R}_{n}^{\alpha} f = \sum_{ k \in \N^d}
\big( \lambda_{|k|}^{\a} \big)^{-|n|\slash 2}
	\langle f , \ell_k^{\a} \rangle_{d\mu^+_{\alpha}}\,
	D^n\ell_k^{\a},
\end{gather*}
where $n \in \N^d \setminus \{\boldsymbol{0}\}$.
\item[{\rm (L.IV)}] Littlewood--Paley--Stein type mixed $g$-functions
\begin{gather*}
{g}^{\alpha}_{n,m}(f) = \big\| \partial_t^m D^n {T}_t^{\alpha}f \big\|_{L^2(\R_+,t^{|n|+2m-1}dt)},
\end{gather*}
where $n \in \N^d$, $m \in \N$ are such that $|n|+m>0$.
\item[{\rm (L.V)}] Mixed Lusin area integrals
\begin{gather*}
{S}^{\a}_{n,m}(f)(x)=
\left( \int_{A(x)\cap \R^{d+1}_+} t^{|n| + 2m -1} \big|\partial_{t}^m
D^n {T}_t^{\alpha} f(z) \big|^{2} \frac{d\mu_{\a}^+(z) \, dt}
{V_{\sqrt{t}}^{\a,+}(x)} \right)^{1\slash 2},
\end{gather*}
where $n \in \N^d$, $m \in \N$ are such that $|n|+m>0$, and $A(x)$, $V_t^{\a,+}(x)$ are def\/ined in~\eqref{def:A} and~\eqref{def:V+}, respectively.
\end{description}

Observe that the operators (L.II), (L.IV) and (L.V) coincide (up to the sign $(-1)^{|\lfloor n/2 \rfloor|}$ in case of (L.II)) with the Laguerre-type operators \mbox{(L-t$'$.II)}, (L-t$'$.IV) and (L-t$'$.V) with $\eta=\boldsymbol{0}$ investigated in Section~\ref{ssec:L-s}. Furthermore, they also coincide with the Laguerre-type operators \mbox{(L-t.II)}, (L-t.IV) and (L-t.V) with $\eta=\boldsymbol{0}$ and $\om = \om_0 \in \N^{|n|}$ such that $\om_j^i = (-1)^{j+1}$, $i \in \{1,\ldots,d\}$, $j \in \{1,\ldots,n_i\}$ studied in Section~\ref{ssec:L-D}. Thus we know that the series def\/ining $R^{\a}_n$ converges in $L^2(d\mu_{\a}^+)$, and the formulas def\/ining $g_{n,m}^{\a}(f)$ and $S_{n,m}^{\a}(f)$ can be understood pointwise for $f \in L^p(Ud\mu_{\a}^{+})$, $U \in A_p^{\a,+}$, $1\le p < \infty$. Moreover, the following result holds (see Theorems~\ref{Sthm:main+} and~\ref{thm:main+}).

\begin{Theorem}\label{thm:lag}
Assume that $\a \in(-1,\infty)^d$. Then the Riesz--Laguerre transforms {\rm (L.II)} extend to bounded linear operators on $L^{p}(Ud\mu^+_{\a})$, $U \in A_{p}^{\a,+}$, $1<p<\infty$, and from $L^{1}(Ud\mu^+_{\a})$ to weak $L^{1}(Ud\mu^+_{\a})$, $U \in A_{1}^{\a,+}$. Furthermore, the square functions {\rm (L.IV)} and {\rm (L.V)} are bounded sublinear operators on $L^{p}(Ud\mu^+_{\a})$, $U \in A_{p}^{\a,+}$, $1<p<\infty$, and from $L^1(Ud\mu^+_{\a})$ to weak
$L^1(Ud\mu^+_{\a})$, $U \in A_{1}^{\a,+}$.
\end{Theorem}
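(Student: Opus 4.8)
The plan is to obtain Theorem~\ref{thm:lag} as a direct consequence of the already proven mapping properties of the auxiliary operators, with no fresh Calder\'on--Zygmund analysis required. Indeed, as announced in the remark preceding the statement, the Laguerre operators (L.II), (L.IV) and (L.V) are precisely the $\eta=\boldsymbol{0}$ instances of the symmetrized auxiliary operators (L-t$'$.II), (L-t$'$.IV) and (L-t$'$.V) (equivalently, of the Laguerre--Dunkl auxiliary operators (L-t.II), (L-t.IV), (L-t.V) with $\eta=\boldsymbol{0}$ and the distinguished type $\om_0$). Thus the whole task reduces to a careful identification of operators, after which Theorem~\ref{Sthm:main+} (or Theorem~\ref{thm:main+}) applies verbatim. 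I would organise the argument in three bookkeeping steps and then invoke the transfer of boundedness.

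First I would pin down the algebraic identity between the interlaced Laguerre derivatives $D^n$ and the symmetrized derivatives $\dersym_{\boldsymbol{0},n}$. As honest differential operators one has $\dersym_{i,\overline{0}}=\partial_{x_i}+x_i=\delta_i$ and $\dersym_{i,\overline{1}}=\partial_{x_i}+\frac{2\a_i+1}{x_i}-x_i=-\delta_i^{*}$, so the defining composition $\dersym_{i,0,n_i}=\dersym_{i,\overline{n_i-1}}\circ\cdots\circ\dersym_{i,\overline{1}}\circ\dersym_{i,\overline{0}}$ alternates $\delta_i,-\delta_i^{*},\delta_i,\ldots$ starting from $\delta_i$ on the right, matching the interlacing pattern of the $i$th block of $D^n$ factor by factor. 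Counting the $\lfloor n_i/2\rfloor$ occurrences of $-\delta_i^{*}$ gives $\dersym_{i,0,n_i}=(-1)^{\lfloor n_i/2\rfloor}(\cdots\delta_i\delta_i^{*}\delta_i)$, hence $\dersym_{\boldsymbol{0},n}=(-1)^{|\lfloor n/2\rfloor|}D^n$. This is exactly the sign recorded before the theorem.

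Next I would identify the spectral data and normalisations. For even multi-indices $k=2j$ one has $\overline{k}=\boldsymbol{0}$, $\Phi_{2j}^{\a}=2^{-d/2}\ell_j^{\a}$ and $\lambda_{|\lfloor(2j+\boldsymbol{1})/2\rfloor|}^{\a}=\lambda_{|j|}^{\a}$, which matches the eigenvalue normalisation used in (L.II), (L.IV), (L.V). Substituting these facts into the series defining $\mathbb{R}_n^{\a,\boldsymbol{0},+}$, and using that for $f$ on $\RRR$ the even extension $f^{\boldsymbol{0}}$ is symmetric so that the pairings against the symmetric functions $\Phi_{2j}^{\a}=2^{-d/2}\ell_j^{\a}$ differ from $\langle f,\ell_j^{\a}\rangle_{d\mu_\a^+}$ only by an explicit power of $2$, the normalising constants cancel against the $2^{-d/2}$ coming from $(\dersym_{\boldsymbol{0},n}\Phi_{2j}^{\a})^{+}$, and I would obtain $R_n^{\a}=(-1)^{|\lfloor n/2\rfloor|}\mathbb{R}_n^{\a,\boldsymbol{0},+}$. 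The identical substitution shows that the pointwise quantities defining $g_{n,m}^{\a}$ and $S_{n,m}^{\a}$ coincide with those of $\mathbb{g}_{n,m}^{\a,\boldsymbol{0},+}$ and $\mathbb{S}_{n,m}^{\a,\boldsymbol{0},+}$; here the sign is immaterial, being absorbed by the absolute value inside the Banach-space norm.

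With the operators thus identified, Theorem~\ref{Sthm:main+} with $\eta=\boldsymbol{0}$ immediately yields, for all three families, $L^{p}(Ud\mu_\a^{+})$-boundedness for $U\in A_{p}^{\a,+}$, $1<p<\infty$, and weak type $(1,1)$ for $U\in A_{1}^{\a,+}$; the factor $(-1)^{|\lfloor n/2\rfloor|}$ plays no role in boundedness. (One could equally invoke Theorem~\ref{thm:main+} with $\eta=\boldsymbol{0}$ and the type $\om_0$.) There is no genuine analytic obstacle at this stage: all the difficulty has been front-loaded into the kernel estimates of Theorems~\ref{Sthm:kerest} and~\ref{thm:kerest}. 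The only point demanding care is precisely the bookkeeping of the identification---matching $D^n$ with $\dersym_{\boldsymbol{0},n}$ including the sign, tracking the normalising constant $2^{\mp d/2}$, and checking the eigenvalue index $\lambda_{|\lfloor(2j+\boldsymbol{1})/2\rfloor|}^{\a}=\lambda_{|j|}^{\a}$---so that the transfer of boundedness from the auxiliary results is justified rigorously rather than merely formally.
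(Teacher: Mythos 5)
Your proposal is correct and follows essentially the same route as the paper: identify (L.II), (L.IV), (L.V) with the $\eta=\boldsymbol{0}$ instances of the auxiliary symmetrized operators (equivalently, the Laguerre--Dunkl auxiliary operators with $\eta=\boldsymbol{0}$ and type $\om_0$), up to the sign $(-1)^{|\lfloor n/2\rfloor|}$, and then invoke Theorem~\ref{Sthm:main+} (or Theorem~\ref{thm:main+}). Your explicit bookkeeping of $\dersym_{\boldsymbol{0},n}=(-1)^{|\lfloor n/2\rfloor|}D^n$, the normalizing constants, and the eigenvalue index is accurate and merely makes explicit what the paper leaves to the reader.
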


Concerning the Riesz--Laguerre transforms, Theorem~\ref{thm:lag} complements the analogous result for the above mentioned other variant of higher-order Riesz--Laguerre transforms obtained in \cite[Corollary~4.2]{NoSz} and earlier in \cite[Theorem~3.8]{NoSt2} under the restriction $\a \in [-1/2,\infty)^d$.

The Laguerre $g$-functions $g_{n,m}^{\a}$ and the Lusin area integrals $S_{n,m}^{\a}$ of order $1$ (i.e., in the cases when $|n|+m=1$) were studied earlier f\/irst by the third-named author under the restriction $\a \in [-1/2,\infty)^d$; see~\cite[Corollary~2.5]{TZS1} and~\cite[Theorem~2.8]{TZS3}, which
cover the just indicated special cases of Theorem~\ref{thm:lag}. An analogue of Theorem~\ref{thm:lag} for the variant of higher-order Laguerre $g$-functions def\/ined via $\delta^n$ rather than $D^n$ is contained in \cite[Corollary~4.2]{NoSz}.

We point out that the results of Sections~\ref{ssec:L-D} and~\ref{ssec:L-s} readily imply further generalizations of known results in the Laguerre setting. For instance, choosing $\eta=e_i$, $i=1,\ldots,d$, in Theorem~\ref{Sthm:main+} one recovers and generalizes the results pertaining to Riesz transforms and square functions related to the so-called modif\/ied Laguerre semigroups found in \cite[p.~664]{NoSt2}, \cite[Corollary~2.5]{TZS1} and \cite[Theorem~2.8]{TZS3}. We leave further details to interested readers.

It is worth mentioning that Theorem~\ref{thm:lag} can be seen as a direct consequence of either Theo\-rem~\ref{thm:main} or Theorem~\ref{Sthm:main}. Indeed, it suf\/f\/ices to restrict the operators in the latter two theorems to ref\/lection invariant functions and choose $\om=\om_0$ appearing implicitly in Theorem~\ref{thm:main}.

Finally, we take this opportunity to complement the results of \cite{NoSz, TZS1,TZS3} by providing an analogue of Theorem~\ref{thm:lag} for the Laguerre mixed Lusin area integrals def\/ined via $\delta^n$ rather than~$D^n$. Thus we consider
\begin{description}\itemsep=0pt
\item[{\rm (L.VI)}] Mixed Lusin area integrals
\begin{gather*}
{s}^{\a}_{n,m}(f)(x)=\left( \int_{A(x)\cap \R^{d+1}_+} t^{|n| + 2m -1} \big|\partial_{t}^m
\delta^n {T}_t^{\alpha} f(z) \big|^{2} \frac{d\mu_{\a}^+(z) \, dt} {V_{\sqrt{t}}^{\a,+}(x)} \right)^{1\slash 2},
\end{gather*}
where $n \in \N^d$, $m \in \N$ are such that $|n|+m>0$, and $A(x)$, $V_t^{\a,+}(x)$ are def\/ined in~\eqref{def:A} and~\eqref{def:V+}, respectively.
\end{description}
Since $T_t^{\a}f(z)$ is a smooth function of $(z,t)\in \R^d_+\times\R_+$ whenever $f\in L^p(Ud\mu_{\a}^+)$, $U \in A_p^{\a,+}$, $1 \le p < \infty$ (see \cite[p.~811]{NoSz}), this def\/inition makes pointwise sense for the general~$f$ as above.
\begin{Theorem}\label{thm:lags}
Assume that $\a \in(-1,\infty)^d$. The Lusin area integrals {\rm (L.VI)} are bounded sublinear operators on $L^{p}(Ud\mu^+_{\a})$, $U \in A_{p}^{\a,+}$, $1<p<\infty$, and from $L^1(Ud\mu^+_{\a})$ to weak $L^1(Ud\mu^+_{\a})$, $U \in A_{1}^{\a,+}$.
\end{Theorem}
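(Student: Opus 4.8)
The plan is to realize $s_{n,m}^{\a}$ as a vector-valued Calder\'on--Zygmund operator in the sense of the space of homogeneous type $(\RRR,\mu_\a^+,\|\cdot\|)$, associated with the Banach space $\mathbb{B}=L^2(A,t^{|n|+2m-1}dzdt)$ and the kernel
\begin{gather*}
s_{n,m}^{\a}(x,y)=\left\{\partial_t^m \delta_{\mathbf{x}}^n G_t^{\a}(\mathbf{x},y)\Big|_{\mathbf{x}=x+z}\sqrt{\Xi_{\a}(x,z,t)}\,\chi_{\{x+z\in\RRR\}}\right\}_{(z,t)\in A},
\end{gather*}
paralleling the treatment of the $D^n$-based area integral (L.V) behind Theorem~\ref{thm:lag}. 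Once the three ingredients below are in place, the asserted $L^p$ and weak type $(1,1)$ bounds follow from the general vector-valued Calder\'on--Zygmund theory, exactly as Theorem~\ref{thm:main+} was deduced from Theorem~\ref{thm:CZ}.

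First I would settle the $L^2(d\mu_\a^+)$-boundedness. The $\delta^n$-based $g$-function $g_{n,m}^{\a}(f)=\|\partial_t^m\delta^n T_t^{\a}f\|_{L^2(\R_+,t^{|n|+2m-1}dt)}$ is bounded on $L^2(d\mu_\a^+)$; this is part of the content of \cite[Corollary~4.2]{NoSz}, which treats the higher-order Laguerre $g$-functions defined via $\delta^n$ for all $\a\in(-1,\infty)^d$. Lemma~\ref{lem:intLusin}(a) then yields $\|s_{n,m}^{\a}(f)\|_{L^2(d\mu_\a^+)}\simeq\|g_{n,m}^{\a}(f)\|_{L^2(d\mu_\a^+)}$, so $s_{n,m}^{\a}$ is bounded on $L^2(d\mu_\a^+)$. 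The association of $s_{n,m}^{\a}$ with the kernel above in the Calder\'on--Zygmund sense is verified by the same routine argument as in the proof of Proposition~\ref{pro:kerassoc}, using Fubini's theorem, the $L^2$-boundedness just obtained, and the dominated convergence theorem together with pointwise bounds on $\partial_t^m\delta_x^n G_t^{\a}(x,y)$.

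The crux is the standard kernel estimates. The decisive input is that the $\delta^n$-based $g$-function kernel $\{\partial_t^m\delta_x^n G_t^{\a}(x,y)\}_{t>0}$, with $\mathbb{B}=L^2(\R_+,t^{|n|+2m-1}dt)$, already satisfies the growth estimate \eqref{gr} and the smoothness estimates \eqref{sm1}, \eqref{sm2} with $\gamma=1$, for each $\a\in(-1,\infty)^d$. These are precisely the estimates underlying \cite[Corollary~4.2]{NoSz}, whose kernel technique covers the whole range $\a\in(-1,\infty)^d$; they are obtained by applying the iterated operator $\delta_x^n$, a polynomial in the $\partial_{x_i}$ and $x_i$, to the integral representation \eqref{IRL} and controlling the resulting expressions by the machinery of Section~\ref{sec:ker}. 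Granting these, the passage to the area kernel proceeds exactly as in the proof of Theorem~\ref{thm:kerest} for the operator (L-t.V): inserting the factor $\sqrt{\Xi_{\a}(x,z,t)}$, restricting to the cone $A$, and enlarging the Banach space from $L^2(\R_+,\cdots dt)$ to $L^2(A,\cdots dzdt)$ transfers the $g$-function estimates to the area-integral estimates \eqref{sm1}, \eqref{sm2}, at the cost of lowering the smoothness exponent to any $\gamma\in(0,1/2]$ with $\gamma<\min_{1\le i\le d}(\a_i+1)$.

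The main obstacle is this last transfer, that is, the standard estimates for the $\delta^n$-based area kernel. Although the pure iterates $\delta^n$ never invoke the singular adjoint term $\frac{2\a_i+1}{x_i}$ of $\delta_i^*$, they do \emph{not} match the pattern of the operators $\delta_{\boldsymbol{0},n,\om}$ arising in (L-t.II)--(L-t.V), since for $n_i\ge 2$ the forced alternation of parities inserts that singular term into every such $\delta_{\boldsymbol{0},n,\om}$; hence one cannot simply quote Theorem~\ref{thm:kerest}, and the requisite pointwise bounds on $\partial_t^m\delta_x^n G_t^{\a}$ and its $x$- and $y$-differences must be taken from the Laguerre-specific estimates of \cite{NoSz}. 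The reduction of $\gamma$ below $\min_i(\a_i+1)$ is unavoidable and stems from integrating the weight $\Xi_{\a}$ over the cone near the boundary of $\RRR$, where each factor $(x_i+z_i)^{2\a_i+1}$ is only locally integrable precisely because $\a_i>-1$.
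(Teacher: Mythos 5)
Your proposal is correct and follows essentially the same route as the paper: realize $s_{n,m}^{\a}$ as a vector-valued Calder\'on--Zygmund operator with values in $L^2(A,t^{|n|+2m-1}dzdt)$, get $L^2$-boundedness from the $\delta^n$-based $g$-functions via Lemma~\ref{lem:intLusin}(a), and derive the standard estimates by feeding the Laguerre-specific pointwise bounds on $\partial_y^r\partial_x^l\partial_t^m\delta_x^n G_t^{\a}(x,y)$ from~\cite{NoSz} into the machinery used for (L-t.V) in the proof of Theorem~\ref{thm:kerest}; your observation that $\delta^n$ is not an instance of $\delta_{\boldsymbol{0},n,\om}$ for $n_i\ge 2$, so one cannot simply quote Theorem~\ref{thm:kerest}, is exactly the point. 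The only refinement in the paper is that it identifies \cite[Lemma~2.4]{NoSz} as giving these pointwise bounds for $|l|+|r|\le 1$ in precisely the form of the right-hand side of Lemma~\ref{lem:heatEST} with $\eta=\boldsymbol{0}$ (with exponent $1/4$ on $\ee\bb$), which is what makes Lemma~\ref{lem:bridge} and Lemma~\ref{lem:intLusin} directly applicable.
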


The proof of Theorem~\ref{thm:lags} goes along yet familiar lines of the Calder\'on--Zygmund theory. One views $s_{n,m}^{\a}$ as vector-valued linear operators associated with
\begin{description}\itemsep=0pt
\item[{\rm (L.VI)}] The kernels associated with mixed Lusin area integrals
\begin{gather*}
{s}^{\alpha}_{n,m}(x,y)=\left\{
\partial_t^m \delta^n_{\mathbf{x}} {G}_{t}^{\a}(\mathbf{x},y)
\Big|_{\mathbf{x} = x+z} \sqrt{\Xi_{\a}(x,z,t)}\chi_{\{x+z\in\RRR\}} \right\}_{(z,t) \in A}
\end{gather*}
\end{description}
taking values in the Banach space $\mathbb{B} = L^2(A,t^{|n|+2m-1}dzdt)$. Such operators are bounded from $L^2(d\mu_{\a}^+)$ to $L^2_{\mathbb{B}}(d\mu_{\a}^+)$, which by means of Lemma~\ref{lem:intLusin}(a) below is a consequence of the $L^2(d\mu_{\a}^+)$-boundedness of mixed $g$-functions def\/ined via $\delta^n$; see the proof of \cite[Theorem~4.1]{NoSz}. The fact that $s_{n,m}^{\a}$ is indeed associated with the kernel
$s_{n,m}^{\a}(x,y)$ is verif\/ied in a similar manner as for the Laguerre-type Lusin area integrals considered in Sections~\ref{ssec:L-D} and~\ref{ssec:L-s}. Finally, the standard estimates for $s_{n,m}^{\a}(x,y)$ follow from estimates obtained in~\cite{NoSz} by means of the strategy established in Section~\ref{sec:ker}. More precisely, observe that with the aid of \cite[Lemma~2.4]{NoSz} for $l,r \in \N^d$ such that $|l| + |r| \le 1$ the quantity
$\big| \partial_y^r \partial_x^l \partial^m_t \delta^n_x G_t^\a (x,y) \big|$ is controlled by the right-hand side of the bound appearing in the statement of Lemma~\ref{lem:heatEST} with $\eta = \boldsymbol{0}$ and the exponent of $\e\bb$ replaced by $1/4$ there (the exact value of this exponent is meaningless for our developments). Then, proceeding as in the proof of Theorem~\ref{thm:kerest} (the case of $\mathfrak{S}^{\alpha, \eta, +}_{n,m,\om}(x,y)$) we obtain the standard estimates for $s_{n,m}^{\a}(x,y)$; see also the proof of Theorem~\ref{Sthm:kerest} in Section~\ref{sec:ker}.

\subsection{Further results and comments} \label{ssec:fur}

For the sake of brevity, we shall focus here on the Laguerre--Dunkl setting. Nevertheless, eve\-ry\-thing what follows in this subsection, in particular the forthcoming theorem, after suitable and quite obvious modif\/ications pertains also to the Laguerre-symmetrized and the Laguerre contexts.

The Laguerre--Dunkl Poisson semigroup $\mathfrak{P}_t^{\a} = \exp(-t\sqrt{\mathfrak{L}_{\a}})$, $t>0$, is related to the Laguerre--Dunkl heat semigroup via the subordination formula
\begin{gather} \label{subord}
\mathfrak{P}_t^{\a}f(x) = \int_0^{\infty} \mathfrak{T}^{\a}_{t^2/(4u)}f(x)\frac{e^{-u}\, du}{\sqrt{\pi u}}
\end{gather}
valid pointwise for $f \in L^p(Wd\mu_{\a})$, $W \in A_p^{\a}$, $1\le p < \infty$. It is of interest and importance to investigate counterparts of the operators (L-D.I) and (L-D.III)--(L-D.V) associated with the Poisson semigroup. More precisely, these are the following.
\begin{description}\itemsep=0pt
\item[{\rm (L-D.P.I)}] The Laguerre--Dunkl Poisson semigroup maximal operator
\begin{gather*}
\mathfrak{P}_{*}^{\alpha}f = \big\| \mathfrak{P}_t^{\alpha}f\big\|_{L^{\infty}(\R_+,dt)}.
\end{gather*}
\item[{\rm (L-D.P.III)}] Multipliers of Laplace and Laplace--Stieltjes transform types
\begin{gather*}
\mathfrak{M}^{\alpha,P}_{\mathfrak{m}} f = \sum_{k \in \N^d}
\mathfrak{m}\Big( \sqrt{\lambda_{|k|/2}^{\a}}\, \Big)
		\langle f , h_k^{\a} \rangle_{d\mu_{\alpha}}\, h_k^{\a},
\end{gather*}
where either $\mathfrak{m}(z) = z\int_0^{\infty} e^{-tz} \psi(t)\, dt$ with $\psi \in L^{\infty}(\R_+,dt)$ or $\mathfrak{m}(z) = \int_{\R_+} e^{-tz} \, d\nu (t)$ with~$\nu$ being a signed or complex Borel measure on~$\R_+$, with its total variation $|\nu|$ satisfying
\begin{gather*}
\int_{\R_+} e^{-t \sqrt{\lambda_{0}^{\a}} } \, d|\nu|(t) < \infty.
\end{gather*}
\item[{\rm (L-D.P.IV)}] Littlewood--Paley--Stein type mixed $g$-functions
\begin{gather*}
\mathfrak{g}^{\alpha,P}_{n,m,\om}(f) = \big\| \partial_t^m \mathfrak{D}^{n,\om}
\mathfrak{P}_t^{\alpha}f \big\|_{L^2(\R_+,t^{2|n|+2m-1}dt)},
\end{gather*}
where $n \in \N^d$, $m \in \N$ are such that $|n|+m>0$, and $\om \in \{-1,1\}^{|n|}$.
\item[{\rm (L-D.P.V)}] Mixed Lusin area integrals
\begin{gather*}
\mathfrak{S}^{\a,P}_{n,m,\om}(f)(x)=
\left( \int_{C(x)} t^{2|n| + 2m -1} \big|\partial_{t}^m \mathfrak{D}^{n,\om}
\mathfrak{P}_t^{\alpha}f (z)\big|^{2}\frac{d\mu_{\a}(z) \, dt}{V_{{t}}^{\a}(x)} \right)^{1\slash 2},
\end{gather*}
where $n \in \N^d$, $m \in \N$ are such that $|n|+m>0$, and $\om \in \{-1,1\}^{|n|}$. Further, $V_t^{\a}$ is as in~\eqref{def:V}, and $C(x)$ is the standard cone with vertex at $x$,
\begin{gather*}
C(x)=(x,0)+C,\qquad C=\big\{(z,t)\in \RR \times\R_+ \colon |z|<{t}\big\}.
\end{gather*}
\end{description}

The exact aperture of $C$ is of course irrelevant for our considerations. It is easily seen that the series def\/ining $\mathfrak{M}^{\alpha,P}_{\mathfrak{m}}$ converges in $L^2(d\mu_{\a})$ and produces $L^2(d\mu_\a)$-bounded operator. The remaining operators are well def\/ined pointwise for $f \in L^p(Wd\mu_{\a})$, $W\in A_p^{\a}$, $1 \le p < \infty$.

The techniques presented in this paper combined with the subordination formula \eqref{subord} allow one to show the following result.
\begin{Theorem}\label{thm:mainP}
Assume that $\a \in(-1,\infty)^d$ and $W$ is a weight on $\RR$ invariant under the reflections $\sigma_1, \ldots ,\sigma_d$. Then the multipliers of Laplace and Laplace--Stieltjes transform types \mbox{{\rm (L-D.P.III)}} extend to bounded linear operators on $L^{p}(Wd\mu_\a)$, $W\in A_{p}^{\a}$, $1<p<\infty$, and from $L^1 (Wd\mu_\a)$ to weak $L^1 (Wd\mu_\a)$, $W\in A_{1}^{\a}$. Furthermore, the Laguerre--Dunkl Poisson semigroup maximal operator {\rm (L-D.P.I)}, the mixed $g$-functions {\rm (L-D.P.IV)} and the mixed Lusin area integrals {\rm (L-D.P.V)} are bounded on $L^{p}(Wd\mu_\a)$, $W\in A_{p}^{\a}$, $1<p<\infty$, and from $L^1 (Wd\mu_\a)$ to weak $L^1 (Wd\mu_\a)$, $W\in A_{1}^{\a}$.
\end{Theorem}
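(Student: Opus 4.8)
The plan is to mimic, step for step, the reduction used for the heat-based operators in the proof of Theorem~\ref{thm:main}, replacing $\mathfrak{T}_t^{\a}$ by $\mathfrak{P}_t^{\a}$ and using the subordination formula \eqref{subord} to transport the required kernel information from the heat to the Poisson setting. As before, I would first split every operator in (L-D.P.III)--(L-D.P.V) into its $\eta$-symmetric components and thereby reduce the claimed weighted bounds to analogous bounds for auxiliary Poisson operators on the space $(\RRR,\mu_\a^+,\|\cdot\|)$, whose kernels are the subordinated versions of the auxiliary heat kernels $\mathfrak{G}_t^{\a,\eta,+}(x,y)$ from \eqref{def:aheatD}. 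The task then becomes to show that these auxiliary operators are (vector-valued) Calder\'on--Zygmund operators in the sense of $(\RRR,\mu_\a^+,\|\cdot\|)$, now with Banach spaces $C_0$, $L^2(\R_+,t^{2|n|+2m-1}dt)$ and $L^2(C,t^{2|n|+2m-1}dzdt)$, where $C$ is the cone from (L-D.P.V). Exactly as in the proofs of Theorems~\ref{thm:CZ} and~\ref{thm:kerest}, this splits into $L^2$-boundedness, kernel association, and the standard estimates.

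Several pieces come essentially for free. For the maximal operator \eqref{subord} gives the pointwise domination
\begin{gather*}
\mathfrak{P}_*^{\a} f \le \mathfrak{T}_*^{\a} f,
\end{gather*}
since $\int_0^\infty e^{-u}(\pi u)^{-1/2}\,du = 1$; hence every mapping property of (L-D.P.I) is inherited directly from the corresponding one for $\mathfrak{T}_*^{\a}$ in Theorem~\ref{thm:main}. For the Laplace--Stieltjes multipliers, a change of variables in \eqref{subord} yields $e^{-t\sqrt{z}} = \int_0^\infty e^{-sz}\,\rho_t(s)\,ds$ with $\rho_t(s) = \frac{t}{2\sqrt{\pi}}s^{-3/2}e^{-t^2/(4s)}$, so that $\mathfrak{m}(\sqrt{\,\cdot\,})$ is again a Laplace--Stieltjes transform type multiplier of $\mathfrak{L}_\a$, with subordinated measure $d\tilde\nu(s) = \big(\int_{\R_+}\rho_t(s)\,d\nu(t)\big)\,ds$; since $\int_0^\infty e^{-s\lambda_0^\a}\rho_t(s)\,ds = e^{-t\sqrt{\lambda_0^\a}}$, the measure $\tilde\nu$ satisfies \eqref{assum} and this case reduces to Theorem~\ref{thm:main}. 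The $L^2$-boundedness of the Laplace type multiplier is immediate because $\mathfrak{m}$ is bounded, and that of the Poisson $g$-functions and area integrals follows by Parseval's identity together with \eqref{A}--\eqref{B}, just as in Proposition~\ref{pro:L2b}, using $\partial_t e^{-t\sqrt{\lambda}} = -\sqrt{\lambda}\,e^{-t\sqrt{\lambda}}$, where the weight $t^{2|n|+2m-1}$ is precisely the one that makes the $t$-integration converge to a bounded multiple of $\big(\lambda_{|k|/2}^\a\big)^{-|n|}$.

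The genuinely new work, and the main obstacle, is the standard estimates for the Poisson-based kernels associated with the Laplace type multipliers, the $g$-functions and the area integrals. The plan is to establish a Poisson analogue of Lemma~\ref{lem:heatEST}, namely pointwise bounds on $\partial_t^m\delta_{\eta,n,\om,x}\mathfrak{P}_t^{\a,\eta,+}(x,y)$ and on its first order $x$- and $y$-derivatives, and then to feed these into the very computational scheme of Section~\ref{sec:ker}. These Poisson bounds I would derive from the heat bounds via subordination: writing
\begin{gather*}
\mathfrak{P}_t^{\a,\eta,+}(x,y) = \int_0^\infty \mathfrak{G}_s^{\a,\eta,+}(x,y)\,\rho_t(s)\,ds
\end{gather*}
and differentiating under the integral sign, the spatial operators $\delta_{\eta,n,\om,x}$, $\partial_{x_i}$, $\partial_{y_i}$ act on $\mathfrak{G}_s^{\a,\eta,+}$ and are controlled by Lemma~\ref{lem:heatEST}, while the time derivatives $\partial_t^m$ fall on the explicit density $\rho_t(s)$. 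The extra powers $t^{2|n|+2m-1}$ in the Banach norms are exactly those that balance the change of homogeneity between $\sqrt{\mathfrak{L}_\a}$ and $\mathfrak{L}_\a$, so that after integrating in $s$ and then in $t$ one recovers growth and smoothness estimates of the same shape \eqref{gr}--\eqref{sm2} as in Theorem~\ref{thm:kerest}. The delicate point will be the bookkeeping in the interplay of the subordination integral in $s$, the $t$-integral defining the norms, and the replacement of the parabolic cone $A(x)$ by the ordinary cone $C(x)$; for the area integrals I expect, exactly as in Theorem~\ref{thm:kerest}, to obtain the smoothness estimates only with $\gamma\in(0,1/2]$ satisfying $\gamma<\min_{1\le i\le d}(\a_i+1)$, again passing between the cone and vertical formulations by means of Lemma~\ref{lem:intLusin}. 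Once these standard estimates are in hand, the general Calder\'on--Zygmund theory yields Theorem~\ref{thm:mainP} in the Laguerre--Dunkl case, and the Laguerre-symmetrized and Laguerre analogues follow by the same argument with the notational changes indicated before the statement.
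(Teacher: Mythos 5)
Your overall strategy---subordination combined with the Calder\'on--Zygmund machinery of Sections~\ref{sec:main} and~\ref{sec:ker}---is exactly the route the paper takes, and several of your reductions are sound or even cleaner than what the paper indicates. The pointwise domination $\mathfrak{P}_*^{\a}f\le\mathfrak{T}_*^{\a}f$ disposes of (L-D.P.I) as intended. Your treatment of the Laplace--Stieltjes multipliers, rewriting $\mathfrak{m}(\sqrt{\,\cdot\,})$ as a Laplace--Stieltjes multiplier of $\mathfrak{L}_\a$ with the subordinated measure $d\tilde\nu$ and verifying \eqref{assum} from $\int_0^\infty e^{-s\lambda_0^\a}\rho_t(s)\,ds=e^{-t\sqrt{\lambda_0^\a}}$, is a genuine reduction to Theorem~\ref{thm:main} that the paper does not spell out (note it does \emph{not} work for the Laplace transform type multipliers, which you correctly leave to the kernel-estimate route). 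For the $g$-functions the paper takes a shorter path: Minkowski's integral inequality applied to \eqref{subord} dominates $\mathfrak{g}^{\a,P}_{n,m,\om}$ by heat-based $g$-functions, so this part too is a direct consequence of Theorem~\ref{thm:main}; your plan of redoing the kernel estimates is heavier but viable.

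The genuine gap is in the Lusin area integrals, at precisely the point you dismiss as ``bookkeeping.'' After subordination the heat kernel is evaluated at time $t^2/(4u)$, so $\z\simeq t^2/(4u)$ when this is small, while on the cone $C$ one only has $\|z\|<t$. Consequently the error term in Lemma~\ref{lem:qz}, namely $\exp\big(\|z\|^2/(4\z)\big)$, can be of size $e^{cu}$ with $c\ge1$, which is not absorbed by the subordination density $e^{-u}u^{-1/2}\,du$: the $u$-integral diverges. So the one tool from the heat setting that controls $q_{\pm}(x+z,y,s)$ by $q_{\pm}(x,y,s)$ fails here, and Lemma~\ref{lem:intLusin} (which only concerns the weight $\Xi_\a$) cannot repair this. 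This is why the paper explicitly warns that Lemma~\ref{lem:qz} must be replaced and supplies the substitute estimate
\begin{gather*}
\left( 1 - \z^2\left( \frac{t}{4u} \right) \right)^{(d+|\a|)/2} \left( \e \left(\z \left( \frac{t}{4u} \right) ,q_{\pm}(x+z,y,s) \right) \right)^{1/2}\\
\qquad{}\lesssim \left( \e \left(\z \left( \frac{t}{4u} \right) ,q_{\pm}(x,y,s) \right) \right)^{c_1} e^{u/2} + \chi_{ \{ t \ge 4u \} } \exp \left( -c_2 \frac{\|x\|^2 + \|y\|^2}{u+1} \right),
\end{gather*}
in which part of the factor $(1-\z^2)^{d+|\a|}$ and of the exponent of $\e\bb$ is sacrificed to reduce the loss to $e^{u/2}$ (absorbed by $e^{-u}$), with a separately integrable Gaussian term on $\{t\ge 4u\}$. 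Without this, or an equivalent inequality, your scheme does not close for (L-D.P.V); with it, the rest of the argument proceeds as you describe, following the proof of \cite[Proposition~2.8]{CaSz}.
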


Actually, the part of Theorem~\ref{thm:mainP} concerning the maximal operator and $g$-functions is a~direct consequence of the subordination formula and Theorem~\ref{thm:main}. To prove the remaining part one has to combine the Calder\'on--Zygmund operator theory approach with~\eqref{subord}. We leave the details to interested readers. For more hints on the way of reasoning in case of the Lusin area integrals we refer to the proof of \cite[Proposition~2.8]{CaSz}. Here we just note that instead of using Lemma~\ref{lem:qz} (see \cite[p.~963, line~7]{CaSz}) it is convenient to use the fact that there exist constants $c_1,c_2 > 0$ such that the estimate
\begin{gather*}
\left( 1 - \z^2\left( \frac{t}{4u} \right) \right)^{(d+|\a|)/2} \left( \e \left(\z \left( \frac{t}{4u} \right) ,q_{\pm}(x+z,y,s) \right) \right)^{1/2} \\
\qquad {}\lesssim
\left( \e \left(\z \left( \frac{t}{4u} \right) ,q_{\pm}(x,y,s) \right) \right)^{c_1} e^{u/2}
+ \chi_{ \{ t \ge 4u \} } \exp \left( {-}c_2 \frac{|x|^2 + |y|^2}{u+1} \right)
\end{gather*}
holds uniformly in $u > 0$, $x,y \in \RRR$, $(z,t) \in A$ and $s \in (-1,1)^d$.

{\sloppy Theorem~\ref{thm:mainP} extends, in particular, \cite[Corollary~2.8]{TZS2} where the Laplace multipliers \mbox{(L-D.P.III)} were treated under the assumption $\a \in [-1/2,\infty)^d$. Further, it generalizes \cite[Theorem~2.7]{TZS3} dealing with the $g$-functions (L-D.P.IV) of order $1$, i.e., with $|n|+m=1$, and under the restriction $\a \in [-1/2,\infty)^d$. The Hermite case $\a=-\boldsymbol{1}/2$ was treated earlier in \cite{HRST,StTo2} (but only order $1$, no weights and $1<p<\infty$ in case of \cite{HRST}). The Lusin area integrals (L-D.P.V) were not studied earlier, except for the one-dimensional Hermite case investigated in~\cite{BMR} (only order~$1$ with $n=0$ and $m=1$).

}

Next, we comment on further operators to which the methods of this paper apply. Here we focus only on square functions, since more general variants of these seem to be of greatest importance. Consider then the following operators.
\begin{description}\itemsep=0pt
\item[{\rm (L-D.IV.gen)}] Fractional Littlewood--Paley--Stein type mixed $g$-functions
\begin{gather*}
\mathfrak{g}^{\alpha}_{n,\gamma,\om,r}(f) = \big\| \partial_t^{\gamma} \mathfrak{D}^{n,\om}
\mathfrak{T}_t^{\alpha}f \big\|_{L^r(\R_+,t^{(|n|/2+\gamma)r-1}dt)},
\end{gather*}
where $n \in \N^d$, $\gamma \ge 0$, $|n|+\gamma>0$, $\om \in \{-1,1\}^{|n|}$ and $2 \le r < \infty$.
\item[{\rm (L-D.V.gen)}] Fractional mixed Lusin area integrals
\begin{gather*}
\mathfrak{S}^{\a}_{n,\gamma,\om,r}(f)(x)= \left( \int_{A(x)} t^{(|n|/2 + \gamma)r -1} \big|\partial_{t}^{\gamma} \mathfrak{D}^{n,\om}
\mathfrak{T}_t^{\alpha}f (z)\big|^{r}\frac{d\mu_{\a}(z) \, dt} {V_{\sqrt{t}}^{\a}(x)} \right)^{1\slash r},
\end{gather*}
where $n \in \N^d$, $\gamma \ge 0$, $|n|+\gamma>0$, $\om \in \{-1,1\}^{|n|}$, $2 \le r < \infty$, and $A(x)$ and $V_{t}^{\a}(x)$ are as in~\eqref{def:A} and~\eqref{def:V}, respectively.
\end{description}
Here $\gamma$ may not be integer, and $\partial_t^{\gamma}$ denotes the Caputo type fractional derivative given by
\begin{gather*}
\partial_t^{\gamma}F(t) = \frac{1}{\Gamma(m-\gamma)} \int_0^{\infty} \partial_t^{m}F(t+s)s^{m-\gamma-1}\,ds, \qquad t > 0,
\end{gather*}
for suitable $F$, with integer $m=\lfloor \gamma \rfloor + 1$. Natural counterparts of (L-D.IV.gen) and (L-D.V.gen) based on the Laguerre--Dunkl Poisson semigroup also come into play. For all these operators a~result analogous to Theorem~\ref{thm:main} can be shown, though it requires further analysis that is beyond the scope of this paper.

The study of $g$-functions involving fractional derivatives goes back to Segovia and Wheeden~\cite{SW}. More recently square functions def\/ined via fractional derivatives were investigated in~\cite{Bpot1,Bpot2,Lpot} in connection with potential spaces in various settings. On the other hand, extensions relying on taking any $r \ge 2$ rather than the standard $r=2$ are quite natural and well known in the literature; see, for instance,~\cite{BCN,BMR}.

Finally, we point out that the general Calder\'on--Zygmund theory covers also more subtle mapping properties comparing to $L^p$-boundedness and weak type $(1,1)$ entering the main results of this paper. This remark concerns, in particular, $H^1-L^1$ and $L^{\infty}-\textrm{BMO}$ boundedness. Such results can also be concluded from the analysis constituting this paper, but we leave the details to interested readers. Useful hints in this direction can be found, e.g., in~\cite{BCN}.

\section{Kernel estimates and the proofs of Theorems \ref{thm:kerest} and \ref{Sthm:kerest}} \label{sec:ker}

In this section we gather various facts, some of them proved earlier elsewhere, which f\/inally allow us to show Theorems~\ref{thm:kerest} and~\ref{Sthm:kerest}, i.e., the standard estimates for all the relevant kernels. Our approach is based on the technique of proving standard estimates in the context of Laguerre function expansions of convolution type established in~\cite{NoSt2} for the restricted range of the type parameter $\a \in [-1/2,\infty)^d$ and then generalized in~\cite{NoSz} to all admissible $\a \in (-1 , \infty)^d$. Moreover, to treat Lusin area integrals, which are the most complex operators in this paper, we will use an adaptation of the method elaborated in the context of the Dunkl Laplacian in~\cite{CaSz} and having roots in~\cite{TZS3}. We emphasize that in this section all $\a \in (-1,\infty)^d$ are treated in a unif\/ied way, however the restriction $\a \in [-1/2,\infty)^d$ would allow to simplify and shorten the analysis.

\subsection{Preparatory results}

To begin with we prove the following result that allows us to control various derivatives of the auxiliary heat kernels under consideration.

\begin{Lemma} \label{lem:heatEST}
Let $d \ge 1$, $\a \in (-1,\infty)^d$, $n,l,r \in \N^d$, $m \in \N$, $\eta \in \{0,1\}^d$, $\om \in \{-1,1\}^{|n|}$. Then
\begin{gather*}
 \big| \partial_y^r \partial_x^l \partial^m_t \delta_{\eta,n,\om,x}
\mathfrak{G}_t^{\a,\eta,+} (x,y) \big|+ \big| \partial_y^r \partial_x^l \partial^m_t \dersymstate_{\eta,n,x}
\mathbb{G}_t^{\a,\eta,+} (x,y) \big|\\
\qquad{} \lesssim
\sum_{\substack{ \eps, \rho, \xi \in \{0,1\}^d \\
a, b \in \{0,1,2\}^d }} x^{\eta - \rho\eta + 2\eps - a\eps} y^{\eta - \xi \eta + 2\eps - b \eps}
\big(1-\zeta^2\big)^{d + |\a| + |\eta| + 2|\eps|} \\
\qquad\quad{} \times \zeta^{-d - |\a| - |\eta| - 2|\eps| - m- (|r| + |l| + |n|)/2 +(|\rho \eta| + |a \eps| + |\xi \eta| + |b \eps| )/2 } \\
\qquad\quad{} \times \int \sqrt{\ee\bb} \piint ,
\end{gather*}
uniformly in $\zeta \in (0,1)$, $s\in(-1,1)^d$ and $x,y \in \RRR$; here $\zeta=\zeta(t)=\tanh t$.
\end{Lemma}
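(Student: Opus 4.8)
The plan is to reduce the whole estimate to understanding how the elementary operations act on a single standard block. Combining \eqref{def:aheatD} with \eqref{IRL} (and, in the symmetrized case, the extra bounded factor $\exp(-2|\eta|t)$), I would first write both $\mathfrak{G}_t^{\a,\eta,+}(x,y)$ and $\mathbb{G}_t^{\a,\eta,+}(x,y)$ as finite sums over $\eps\in\{0,1\}^d$ of blocks
\[
x^{\eta+2\eps}\,y^{\eta+2\eps}\,\big(1-\z^2\big)^{d+|\a|+|\eta|+2|\eps|}\,\z^{-d-|\a|-|\eta|-2|\eps|}\int\e\bb\,\piint,
\]
where $\z=\tanh t$. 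After noting that differentiation may be carried under the integral sign (for fixed $\z\in(0,1)$ and $x,y\in\RRR$ the differentiated integrands stay bounded and $\Pi_{\a+\eta+\boldsymbol{1}+\eps}$ is finite), it suffices to show that one application of $\partial_{x_i}$, $\partial_{y_i}$, $\partial_t$, or of a single first-order building block --- $\om^i_j\partial_{i,\overline{\cdots}}+x_i$ in the Laguerre--Dunkl case and $\partial_{i,\overline{\cdots}}+(-1)^{\overline{\cdots}}x_i$ in the symmetrized case --- carries such a block into a finite sum of blocks of the same shape with controlled shifts of the exponents, and then to iterate.

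First I would record the identities $\partial_t=(1-\z^2)\partial_\z$ and
\[
\partial_{x_i}\e\bb=-\Big(\frac{x_i+y_is_i}{2\z}+\frac{\z(x_i-y_is_i)}{2}\Big)\e\bb,
\]
with the analogous formulas for $\partial_{y_i}$ and $\partial_\z$. The crux is an absorption principle. From $|s_i|<1$ one has $q_\pm\ge(x_i\pm y_is_i)^2$, hence $|x_i\pm y_is_i|\le\sqrt{q_\pm}$ and $x_i\le\tfrac12(\sqrt{q_+}+\sqrt{q_-})$; combining this with the trivial bounds $(q_+/\z)^{\theta}e^{-q_+/(8\z)}\lesssim1$ and $(\z q_-)^{\theta}e^{-\z q_-/8}\lesssim1$ applied against the half-exponential $\sqrt{\e\bb}$, I would establish
\[
\frac{|x_i+y_is_i|}{\z}\,\e\bb\lesssim\z^{-1/2}\sqrt{\e\bb},\qquad \z\,|x_i-y_is_i|\,\e\bb\lesssim\z^{1/2}\sqrt{\e\bb},
\]
and, decisively, the creation-operator bound $x_i\,\e\bb\lesssim\z^{-1/2}\sqrt{\e\bb}$ (and its $y_i$ analogue). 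The last estimate is what keeps the multiplication by $x_i$ inside each first-order block from inflating the monomial degree: after absorption it behaves exactly like the singular part of a genuine derivative, costing one factor $\z^{-1/2}$ and consuming half of the exponential. Since only a bounded number (depending on $n,l,r,m$) of such factors is ever absorbed into one half-exponential, all implied constants stay uniform in $\z,s,x,y$.

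With these rules the remainder is bookkeeping. Splitting each first-order block by the product rule into its pieces (the term $\om^i_j\partial_{x_i}$ falling either on a prefactor monomial or on $\e\bb$, the term $\om^i_j\frac{2\a_i+1}{x_i}$ present when the relevant parity index is $1$, and the multiplication by $x_i$), I would argue that each of the $|l|+|r|+|n|$ spatial operations yields, after absorption, either a factor $\z^{-1/2}$ with the monomials unchanged, or the decrease by one of an exponent of $x_i$ or $y_i$ with no accompanying power of $\z$. Recording the lost $\eta$-powers by $\rho,\xi\in\{0,1\}^d$ and the lost $2\eps$-powers, at most two per coordinate, by $a,b\in\{0,1,2\}^d$, the surviving monomials are precisely $x^{\eta-\rho\eta+2\eps-a\eps}y^{\eta-\xi\eta+2\eps-b\eps}$, while the number of genuine $\z^{-1/2}$ factors equals $|l|+|r|+|n|-(|\rho\eta|+|a\eps|+|\xi\eta|+|b\eps|)$; together with the factor $\z^{-m}$ from the $m$ time derivatives this reproduces exactly the exponent of $\z$ in the statement. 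Finally, since each $\partial_t$ never lowers the power of $1-\z^2$ below the baseline and $1-\z^2\le1$, I would discard the surplus factors and keep exactly $(1-\z^2)^{d+|\a|+|\eta|+2|\eps|}$; the measure $\piint$ is never altered. The symmetrized kernel is treated verbatim, the only novelty being $\exp(-2|\eta|t)$, whose $t$-derivatives stay bounded and hence change nothing.

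The hard part will be twofold. Conceptually, the key realization is that the creation term $x_i$ in each first-order block must be absorbed into the Gaussian via $x_i\le\tfrac12(\sqrt{q_+}+\sqrt{q_-})$ so that it behaves like a derivative; without this the monomial degrees would exceed $\eta+2\eps$ and the stated bound would be unattainable. Technically, the obstacle is the combinatorial accounting: one must check, uniformly over all ways the $|l|+|r|+|n|$ derivatives distribute between the prefactor monomials and the exponential (and over which parity indices carry a $\frac{1}{x_i}$ term), that the count of uncompensated $\z^{-1/2}$ factors is exactly $|l|+|r|+|n|-(|\rho\eta|+|a\eps|+|\xi\eta|+|b\eps|)$ and that every admissible decrement $\rho_i\eta_i+a_i\eps_i$ is representable with $\rho_i\in\{0,1\}$, $a_i\in\{0,1,2\}$. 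Both points become routine once the absorption estimates above are in hand.
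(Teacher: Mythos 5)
Your reduction to blocks of the form $x^{\eta+2\eps}y^{\eta+2\eps}\big(1-\z^2\big)^{d+|\a|+|\eta|+2|\eps|}\z^{-d-|\a|-|\eta|-2|\eps|}\int\e\bb\,\piint$ and your absorption estimates (e.g.\ $x_i\,\e\bb\lesssim\z^{-1/2}\sqrt{\e\bb}$, which indeed follows from $x_i^2\le\tfrac12(q_++q_-)$) are sound, and for $|n|\le 1$ the scheme would go through. The genuine gap is in the treatment of the difference part $\om_j^i\tfrac{2\a_i+1}{x_i}$ of the odd-parity factors once $n_i\ge 2$. In $\delta_{i,\eta_i,n_i,\om^i}$ every second factor carries such a term, and from the second factor onward it acts on intermediate terms that individually do \emph{not} vanish at $x_i=0$. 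Concretely, take $d=1$, $\eta=0$, $n=2$, $\eps=0$: the first factor $\om_1^1\partial_x+x$ produces, among others, the term $-\om_1^1\frac{x+ys}{2\z}\,\e\bb$, and the second factor then applies $\frac{2\a+1}{x}$ to it, yielding a term $\sim\frac{ys}{x\z}\,\e\bb$ with a genuine $x^{-1}$ singularity. Every monomial $x^{\eta-\rho\eta+2\eps-a\eps}$ on the right-hand side of the lemma has a non-negative exponent, so no term-by-term absolute-value bound of the kind you propose can hold near $x=0$; the singularity cancels only after summing the pieces and integrating in $s$ (here because $\e(\z,q_\pm(0,y,s))$ is independent of $s$ and $\int s\,\Pi_{\a+1}(ds)=0$), a cancellation invisible to your bookkeeping. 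Your closing assertion that ``every admissible decrement is representable with $\rho_i\in\{0,1\}$, $a_i\in\{0,1,2\}$'' is precisely the point that fails.

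The paper circumvents this with a structural step your proposal is missing: the heat equation. Since $\mathfrak{L}_{\a}=-(T^{\a})^2+x^2$ in dimension one, one has $\partial_{\overline{\eta+1},x}\partial_{\eta,x}\mathfrak{G}_t^{\a,\eta,+}(x,y)=(\partial_t+x^2)\mathfrak{G}_t^{\a,\eta,+}(x,y)$, and iterating this identity rewrites $\delta_{\eta,n,\om,x}\mathfrak{G}_t^{\a,\eta,+}$ as a finite sum of polynomials in $x$ times $\partial_t^p\partial_{\eta,x}^q\mathfrak{G}_t^{\a,\eta,+}$ with $q\in\{0,1\}$. Thus at most one Dunkl-type derivative survives, and its $\frac{2\a+1}{x}$ part (present only when $\eta=1$) is harmlessly absorbed by the guaranteed prefactor $x^{\eta+2\eps}=x^{1+2\eps}$; only classical derivatives and multiplications by polynomials remain, which are then handled by Leibniz, Fa\`a di Bruno and the absorption estimates you describe. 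A secondary flaw: ``apply one block, absorb, iterate'' is not literally executable, since after absorption you hold an inequality involving $\sqrt{\e\bb}$, which you cannot differentiate again; one must first compute the full derivative exactly and only then estimate, as the paper does.
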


To prove Lemma~\ref{lem:heatEST} we will use Fa\`a di Bruno's formula for the $N$th derivative, $N \ge 1$, of the composition of two functions, see~\cite{Jo},{\samepage
\begin{gather} \label{Faa}
\partial_{x}^N(g\circ f)(x) = \sum \frac{N!}{p_1! \cdots p_N!} \partial^{p_1+\dots+p_N}
	g\circ f(x) \left( \frac{\partial_x^1 f(x)}{1!}\right)^{p_1} \cdots	\left( \frac{\partial_x^N f(x)}{N!}\right)^{p_N},
\end{gather}
where the summation runs over all $p_1,\ldots,p_N \ge 0$ such that $p_1+2p_2+\dots+N p_N = N$.}

\begin{proof}[Proof of Lemma~\ref{lem:heatEST}] Since $\mathfrak{G}_t^{\a,\eta,+} (x,y)$ and $\mathbb{G}_t^{\a,\eta,+} (x,y)$ have a product structure, an application of Leibniz' rule produces
\begin{gather*}
\partial_y^r \partial_x^l \partial^m_t \delta_{\eta,n,\om,x}
\mathfrak{G}_t^{\a,\eta,+} (x,y)=\sum_{M \in \N^d, \, |M| = m}
c_M \prod_{i = 1}^d \partial_{y_i}^{r_i} \partial_{x_i}^{l_i} \partial^{M_i}_t \delta_{i,\eta_{i},n_i,\om^i,x_i}
\mathfrak{G}_t^{\a_i,\eta_i,+} (x_i,y_i)
\end{gather*}
and a similar identity related to $\mathbb{G}_t^{\a,\eta,+} (x,y)$. Hence we see that to prove Lemma~\ref{lem:heatEST} it suf\/f\/ices to consider the one-dimensional situation. Therefore from now on we assume that $d=1$. We treat each of the two terms in the left-hand side in question separately.

Notice that the Laguerre--Dunkl heat semigroup $\mathfrak{T}_t^\a = \exp(-t\mathfrak{L_{\a}})$, $t >0$, satisf\/ies the heat equation
$\partial_t \mathfrak{T}_t^\a f (x) = - \mathfrak{L_{\a}} \mathfrak{T}_t^\a f (x)$. Thus the Laguerre--Dunkl heat kernel $\mathfrak{G}_t^{\a} (x,y)$ also satisf\/ies this equation with respect to $x$, i.e., we have
\begin{gather*}
\partial_t \mathfrak{G}_t^{\a} (x,y) = (T_x^\a)^2 \mathfrak{G}_t^{\a} (x,y) - x^2 \mathfrak{G}_t^{\a} (x,y),
\end{gather*}
as can be easily checked by using the identity $\mathfrak{L_{\a}} = -(T^\a)^2 + x^2$. Denote for brevity $\partial_{\eta,x} = \partial_{1,\eta,x}$ (see~\eqref{pp}) and observe that for each $\eta \in \{ 0,1 \}$ the functions
\begin{gather*}
\partial_t \mathfrak{G}_t^{\a,\eta,+} (x,y) \qquad \textrm{and} \qquad \partial_{\overline{\eta+1},x} \partial_{\eta,x} \mathfrak{G}_t^{\a,\eta,+} (x,y) - x^2
\mathfrak{G}_t^{\a,\eta,+} (x,y)
\end{gather*}
are $\eta$-symmetric with respect to $x$, and with the aid of decomposition \eqref{GD} they are both $\eta$-symmetric components of $2 \partial_t \mathfrak{G}_t^{\a} (x,y)$. By the uniqueness of $\eta$-symmetric component we get
\begin{gather*}
\partial_{\overline{\eta+1},x} \partial_{\eta,x} \mathfrak{G}_t^{\a,\eta,+} (x,y) = \partial_t \mathfrak{G}_t^{\a,\eta,+} (x,y)+ x^2 \mathfrak{G}_t^{\a,\eta,+} (x,y).
\end{gather*}
Using this identity and proceeding inductively we infer that
\begin{gather*}
\delta_{\eta,n,\om,x} \mathfrak{G}_t^{\a,\eta,+} (x,y)= \sum_{\substack{q = 0,1 \\ 2p + q \le n}}
P_{n,p,q,\eta,\om} (x) \partial_t^p \partial^q_{\eta,x} \mathfrak{G}_t^{\a,\eta,+} (x,y),
\end{gather*}
where $P_{n,p,q,\eta,\om}$ is a $(\overline{n-q})$-symmetric polynomial of degree at most $n - 2p - q$; here and later on we use the natural convention that $\partial^q_{\eta,x} = \id$ for $q = 0$. Combining this with Leibniz' rule and \cite[Lemma~A.3(d)]{NoSz} we see that our task reduces to showing the estimate
\begin{gather}
\big| \partial_y^R \partial_x^L \partial_t^M \partial_{\eta,x}^q \mathfrak{G}_t^{\a,\eta,+} (x,y) \big|
 \lesssim
\sum_{\substack{ \eps, \rho, \xi = 0,1 \\
a, b = 0,1,2 }}
x^{\eta - \rho\eta + 2\eps - a\eps} y^{\eta - \xi \eta + 2\eps - b \eps}
\big(1-\zeta^2\big)^{1 + \a + \eta + 2\eps}\label{est2}
\\ \nonumber
\hphantom{\big| \partial_y^R \partial_x^L \partial_t^M \partial_{\eta,x}^q \mathfrak{G}_t^{\a,\eta,+} (x,y) \big|\lesssim}{}
 \times
\zeta^{ -1 - \a - \eta - 2\eps - M - (R + L + q)/2 +
(\rho \eta + a \eps + \xi \eta + b \eps )/2 } \\
\hphantom{\big| \partial_y^R \partial_x^L \partial_t^M \partial_{\eta,x}^q \mathfrak{G}_t^{\a,\eta,+} (x,y) \big|\lesssim}{}
\times
	\int \big( \e\bb \big)^{3/4} \piinta,\nonumber
\end{gather}
uniformly in $\z \in (0,1)$, $s \in (-1,1)$ and $x,y \in \R_+$; here $R,L,M \in \N$, $\eta,q \in \{0, 1 \}$ are f\/ixed and $\z = \z(t) = \tanh t$.
Combining~\eqref{def:aheatD} with the representation formula~\eqref{IRL} of $G_t^{\a} (x,y)$ our task can be further reduced to proving that
\begin{gather} \nonumber
 \big| \partial_y^R \partial_x^L \partial_t^M \big[ \big(1 - \z^2\big)^{W_1} \z^{W_2}
x^{\eta_1 + 2\eps } y^{\eta_2 + 2\eps } \e \bb
 \big] \big| \\
 \qquad{} \lesssim \sum_{\substack{ \rho, \xi = 0,1 \\
a, b = 0,1,2 }}
x^{\eta_1 - \rho\eta_1 + 2\eps - a\eps} y^{\eta_2 - \xi \eta_2 + 2\eps - b \eps}
\big(1 - \z^2\big)^{W_1}\label{est1} \\
 \qquad \quad {}\times \z^{W_2 - M - (R + L)/2 +(\rho \eta_1 + a \eps + \xi \eta_2 + b \eps )/2 }\big( \e\bb \big)^{3/4}, \nonumber
\end{gather}
uniformly in $\z \in (0,1)$, $s \in (-1,1)$ and $x,y \in \R_+$; here $R,L,M \in \N$, $W_1,W_2 \in \R$ and $\eta_1, \eta_2, \eps \in \{0, 1 \}$ are f\/ixed, and $\z = \z(t) = \tanh t$. We now focus on proving~\eqref{est1}.

By Leibniz' rule for every $L \in \N$ and $\eta_1, \eps \in \{0, 1 \}$ we have
\begin{gather*}
\partial_x^L \big[ x^{\eta_1 + 2 \eps} f \big]=
\sum_{\substack{ \rho = 0,1 \\
a = 0,1,2 }}
c_{L, \eta_1, \eps, \rho, a } \, \chi_{ \{ L \ge \rho \eta_1 + a \eps \} }
x^{\eta_1 - \rho\eta_1 + 2\eps - a\eps}\partial_x^{L - \rho\eta_1 - a\eps} f,
\end{gather*}
where $c_{L, \eta_1, \eps, \rho, a } \in \R$ are constants. This leads to the equation
\begin{gather*}
\partial_y^R \partial_x^L \big[ x^{\eta_1 + 2 \eps} y^{\eta_2 + 2\eps } f \big] =
\sum_{\substack{ \rho, \xi = 0,1 \\
a, b = 0,1,2 }}
c_{L, R, \eta_1, \eta_2, \eps, \rho, \xi, a, b }
 \chi_{ \{ L \ge \rho \eta_1 + a \eps, \, R \ge \xi \eta_2 + b \eps \} } \\
\hphantom{\partial_y^R \partial_x^L \big[ x^{\eta_1 + 2 \eps} y^{\eta_2 + 2\eps } f \big]=}{} \times
x^{\eta_1 - \rho\eta_1 + 2\eps - a\eps}
y^{\eta_2 - \xi \eta_2 + 2\eps - b \eps}
\partial_y^{R - \xi \eta_2 - b \eps} \partial_x^{L - \rho\eta_1 - a\eps} f,
\end{gather*}
where $c_{L, R, \eta_1, \eta_2, \eps, \rho, \xi, a, b } \in \R$ are constants; here $R,L \in \N$ and $\eta_1, \eta_2, \eps \in \{0, 1 \}$ are f\/ixed. Further, denoting
\begin{gather*}
F = F(\z,q_{\pm}) = \log\e(\z,q_{\pm})= - \frac{1}{4\z}q_{+} - \frac{\z}{4}q_{-},
\end{gather*}
and using Fa\`a di Bruno's formula \eqref{Faa} (notice that $\partial_x^3 F = \partial_x^4 F = \dots = 0$) we arrive at
\begin{gather*}
\partial_x^L \e \bb = \sum_{L_1 + 2 L_2 = L} c_{L_1,L_2} (\partial_x F)^{L_1} (\partial_x^2 F)^{L_2} \e \bb,
\end{gather*}
where $c_{L_1,L_2} \in \R$ are constants; observe that this formula works also for $L=0$. By Leibniz' rule and another application of \eqref{Faa} we obtain
\begin{gather*}
 \partial_y^R \partial_x^L \e \bb \\
 \qquad {}= \sum_{p_1 + p_2 + 2p_3 + 2p_4 = L + R} c_{L,R,p_1, \ldots, p_4}
(\partial_x F)^{p_1} (\partial_y F)^{p_2} (\partial_x^2 F)^{p_3}(\partial_x \partial_y F)^{p_4} \e \bb,
\end{gather*}
where $c_{L,R,p_1, \ldots, p_4} \in \R$ are constants, possibly zero; here we used the identities $\partial_x^2 F = \partial_y^2 F$ and $\partial_y^R [ (\partial_x F)^{L_1} ] = \chi_{ \{ L_1 \ge R \} } c_{L_1,R} (\partial_x F)^{L_1 - R} (\partial_x \partial_y F)^{R}$ for some constants $c_{L_1,R} \in \R$.
These facts altogether give the identity
\begin{gather*}
 \partial_y^R \partial_x^L \partial_t^M \big[ \big(1 - \z^2\big)^{W_1} \z^{W_2}
x^{\eta_1 + 2\eps } y^{\eta_2 + 2\eps } \e \bb \big] \\
 \qquad{} = \sum_{\substack{ \rho, \xi = 0,1 \\
a, b = 0,1,2 }}\chi_{ \{ L \ge \rho \eta_1 + a \eps, \, R \ge \xi \eta_2 + b \eps \} }
\bigg( x^{\eta_1 - \rho\eta_1 + 2\eps - a\eps} y^{\eta_2 - \xi \eta_2 + 2\eps - b \eps} \\
 \qquad \quad{} \times
\sum_{p_1 + p_2 + 2p_3 + 2p_4 = L + R - \rho \eta_1 - a \eps - \xi \eta_2 - b \eps}
c_{L,R,\eta_1,\eta_2, \eps, \rho, \xi, a, b,p_1, \ldots, p_4} \\
 \qquad \quad {}\times
\partial_t^M \big[ \big(1 - \z^2\big)^{W_1} \z^{W_2} (\partial_x F)^{p_1} (\partial_y F)^{p_2} (\partial_x^2 F)^{p_3}
(\partial_x \partial_y F)^{p_4} \e \bb \big] \bigg),
\end{gather*}
where $c_{L,R,\eta_1,\eta_2, \eps, \rho, \xi, a, b,p_1, \ldots, p_4} \in \R$ are constants. Hence to prove \eqref{est1} it is enough to check that
\begin{gather} \nonumber
 \big| \partial_t^M \big[ \big(1 - \z^2\big)^{W_1} \z^{W_2}
(\partial_x F)^{p_1} (\partial_y F)^{p_2} (\partial_x^2 F)^{p_3}
(\partial_x \partial_y F)^{p_4} \e \bb \big] \big| \\ \label{est3}
 \qquad {}\lesssim
\big(1 - \z^2\big)^{W_1} \z^{W_2 - M - p_1/2 - p_2/2 - p_3 - p_4}\big( \e \bb \big)^{3/4},
\end{gather}
uniformly in $\z \in (0,1)$, $s \in (-1,1)$ and $x,y \in \R_+$; here $M, p_1, \ldots, p_4 \in \N$ and $W_1,W_2 \in \R$ are f\/ixed, and $\z = \z(t) = \tanh t$. We now justify this estimate.

Using \cite[equation~(A.3)]{NoSz} and \cite[Lemma~A.3(a)]{NoSz} we see that for $M \in \N$ and $S_1,S_2 \in \R$ f\/ixed we have
\begin{gather} \label{est4}
\big| \partial_{\z}^M \big[ \big(1 - \z^2\big)^{S_1} \z^{S_2}\e \bb \big] \big| \lesssim \big(1 - \z^2\big)^{S_1 - M} \z^{S_2 - M} \big( \e \bb \big)^{7/8},
\end{gather}
uniformly in $\z \in (0,1)$, $s \in (-1,1)$ and $x,y \in \R_+$. Further, since
\begin{alignat*}{4}
& \partial_x F = - \frac{1}{2 \z} \Psi_{+} - \frac{\z}{2 } \Psi_{-},
\qquad &&
\partial_x^2 F = - \frac{1}{2 \z} - \frac{\z}{2 },
\qquad&&
\Psi_{\pm} = \Psi_{\pm}(x,y,s) = x \pm ys,& \\
& \partial_y F = - \frac{1}{2 \z} \Phi_{+} - \frac{\z}{2 } \Phi_{-},
\qquad &&
\partial_x \partial_y F = s \left({-} \frac{1}{2 \z} + \frac{\z}{2 } \right),
\qquad &&
\Phi_{\pm} = \Phi_{\pm}(x,y,s) = y \pm xs,&
\end{alignat*}
by Newton's formula we get
\begin{gather*}
(\partial_x F)^{p_1} (\partial_y F)^{p_2} (\partial_x^2 F)^{p_3}(\partial_x \partial_y F)^{p_4} \\
 \qquad {} =s^{p_4} \sum_{j = - p_3 - p_4}^{p_3 + p_4} \sum_{k_1 = 0}^{p_1}
\sum_{k_2 = 0}^{p_2} c_{p_1, \ldots, p_4, j, k_1, k_2} \z^j
\left( \frac{\Psi_+}{\z} \right)^{k_1}(\z \Psi_-)^{p_1 - k_1}
\left( \frac{\Phi_+}{\z} \right)^{k_2} (\z \Phi_-)^{p_2 - k_2}.
\end{gather*}
Consequently, using \eqref{est4} (with $S_1 = W_1$, $S_2 = W_2 + j + p_1 -2k_1 + p_2 -2k_2$) and \cite[Lemma~A.3(b) and (c)]{NoSz} we arrive at
\begin{gather*}
 \big| \partial_{\z}^M \big[ \big(1 - \z^2\big)^{W_1} \z^{W_2}
(\partial_x F)^{p_1} (\partial_y F)^{p_2} (\partial_x^2 F)^{p_3}
(\partial_x \partial_y F)^{p_4} \e \bb \big] \big| \\
 \qquad{} \lesssim
 \big(1 - \z^2\big)^{W_1 - M} \z^{W_2 - M - p_1/2 - p_2/2 - p_3 - p_4} \big( \e \bb \big)^{3/4},
\end{gather*}
uniformly in $\z \in (0,1)$, $s \in (-1,1)$ and $x,y \in \R_+$; here $M, p_1, \ldots, p_4 \in \N$ and $W_1,W_2 \in \R$ are f\/ixed. Finally, an application of Fa\`a di Bruno's formula \eqref{Faa} and the identity \cite[equation~(A.2)]{NoSz} leads us to \eqref{est3}, and the desired estimate connected with $\mathfrak{G}_t^{\a,\eta,+} (x,y)$ follows.

It remains to deal with the estimate in question for the kernel $\mathbb{G}_t^{\a,\eta,+}(x,y)$. Proceeding in an analogous way as at the beginning of the proof we obtain the formula $\partial_t \mathbb{G}_t^{\a} (x,y) = (\mathbb{D}_x^2 - \lambda_0^{\a}) \mathbb{G}_t^{\a} (x,y)$ and, consequently, for symmetry reasons,
\begin{gather*}
\dersym_{\eta,2,x} \mathbb{G}_t^{\a,\eta,+} (x,y) = (\partial_t + \lambda_0^{\a}) \mathbb{G}_t^{\a,\eta,+} (x,y).
\end{gather*}
Iterating the latter identity we infer that
\begin{gather*}
\dersym_{\eta,n,x} \mathbb{G}_t^{\a,\eta,+} (x,y) = (\partial_t + \lambda_0^{\a})^{\lfloor n/2 \rfloor} \dersym_{\eta,\overline{n},x}
\mathbb{G}_t^{\a,\eta,+} (x,y).
\end{gather*}
Since $\dersym_{\eta,\overline{n},x} = \partial_{\eta,x}^{\overline{n}} + \overline{n} (-1)^{\eta} x$ and $\mathbb{G}_t^{\a,\eta,+} (x,y) = e^{-2 \eta t} \mathfrak{G}_t^{\a,\eta,+} (x,y)$,
it is easy to check that
\begin{gather*}
\dersym_{\eta,n,x} \mathbb{G}_t^{\a,\eta,+} (x,y)=e^{-2 \eta t}\sum_{\substack{q = 0,1 \\ 2p + q \le n}}
P_{n,p,q,\eta} (x) \partial_t^p \partial_{\eta,x}^q \mathfrak{G}_t^{\a,\eta,+} (x,y),
\end{gather*}
where $P_{n,p,q,\eta}$ is a $(\overline{n-q})$-symmetric polynomial of degree at most $n - 2p - q$. Now applying Leibniz' rule (to the variables $x$ and $t$) and then using sequently~\eqref{est2} and \cite[Lemma~A.3(d)]{NoSz} we obtain the required bound for the quantity related to $\mathbb{G}_t^{\a,\eta,+} (x,y)$.

This f\/inishes the whole reasoning justifying Lemma~\ref{lem:heatEST}.
\end{proof}

The next lemma is an essence of the method of proving standard estimates presented in this paper. It provides a link from the estimates obtained in Lemma~\ref{lem:heatEST} to the standard estimates for the space $(\RRR,\mu_{\a}^+,\|\cdot\|)$. We note that only the values $p \in \{1,2,\infty\}$ will be needed for our purposes. However, other values of $p$ are also important, for instance in connection with more general square functions introduced in Section~\ref{ssec:fur}. The lemma below is proved in much the same fashion as \cite[Lemma~2.6]{NoSz}, hence we omit the details.

\begin{Lemma}\label{lem:bridge}
Assume that $\a \in (-1 , \infty)^d$, $1 \le p \le \infty$, $W \in \mathbb{R}$, $C>0$, $\eps, \eta, \rho, \xi \in \{ 0,1 \}^d$ and $a,b \in \{ 0,1,2 \}^d$ are fixed. Further, let $\tau \in \N^d$ be such that $\tau \le \eta - \rho \eta + 2\eps - a \eps$ and let $D$ be a fixed constant satisfying $D < d + |\a|$. Given $u \ge 0$, we consider the function $\Upsilon_u \colon \RRR \times \RRR \times (0,1) \to \mathbb{R}$
defined by
 \begin{gather*}
 \Upsilon_{u}(x,y,\z(t)) = x^{\eta - \rho \eta + 2\eps- a \eps - \tau}
 y^{\eta- \xi \eta + 2\eps- b \eps }	\big( \loo (\z) \big)^{|\tau|/2} \big(1 - \z^2\big)^{d + |\a| + |\eta| + 2 |\eps| - D} \\
\hphantom{\Upsilon_{u}(x,y,\z(t)) =}{} \times
 \z^{ -d - |\a| - |\eta| - 2|\eps| + (|\rho \eta| + |a \eps| + |\xi \eta| + |b \eps|)\slash 2 - W\slash p-u\slash 2} \\
\hphantom{\Upsilon_{u}(x,y,\z(t)) =}{}
 \times \int \big( \ee \bb \big)^C \piint,
 \end{gather*}
 where $W \slash p = 0$ for $p=\infty$. Then $\Upsilon_u$ satisfies the integral estimate
 \begin{gather*}
 \big\|\Upsilon_{u}\big(x,y,\z(t)\big)\big\|_{L^{p}(\R_+,t^{W-1}dt)} \lesssim \frac{1}{||x-y||^{u}} \frac{1}{\mu^+_{\a}(B(x,||y-x||))},
 \end{gather*}
 uniformly in $x,y \in \RRR$, $x\neq y$.
\end{Lemma}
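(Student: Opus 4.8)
The plan is to follow the strategy of \cite[Lemma~2.6]{NoSz}: pass to the variable $\z$, exploit the Gaussian decay hidden in $\ee\bb$ to peel off the factor $\|x-y\|^{-u}$, and then reduce the whole claim to estimating a $\z$-integral against the known size of balls in $(\RRR,\mu_\a^+,\|\cdot\|)$. First I would change variables via $\z=\z(t)=\tanh t$. Since $t=\tfrac12\loo(\z)$ and $dt=(1-\z^2)^{-1}\,d\z$, with $t\simeq\z$ for $\z$ bounded away from $1$ and $\loo(\z)\simeq\log\frac{1}{1-\z}$ as $\z\to1^-$, the norm $\|\cdot\|_{L^p(\R_+,t^{W-1}dt)}$ becomes an $L^p$ integral in $\z\in(0,1)$ (a supremum when $p=\infty$) against an explicit $\z$-weight. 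The powers $\z^{-W/p}$ and $(\loo\z)^{|\tau|/2}$ in $\Upsilon_u$ are calibrated precisely to this weight; in particular the logarithmic growth of $(\loo\z)^{|\tau|/2}$ as $\z\to1^-$ is dominated by the factor $(1-\z^2)^{d+|\a|+|\eta|+2|\eps|-D}$, whose exponent is strictly positive by the hypothesis $D<d+|\a|$, so convergence near $\z=1$ is never in doubt.

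The decisive elementary input is the pointwise bound
\[
q_+(x,y,s)-\|x-y\|^2=2\sum_{i=1}^d x_i y_i(1+s_i)\ge 0,\qquad s\in(-1,1)^d,\quad x,y\in\RRR,
\]
and likewise $q_-\ge\|x-y\|^2$, whence $\ee\bb\le\exp\big(-\|x-y\|^2/(4\z)\big)$. I would split $(\ee\bb)^C$ into two factors of exponent $C/2$. One of them is bounded by $\exp\big(-\tfrac{C}{8\z}\|x-y\|^2\big)$; combined with the $\z^{-u/2}$ present in $\Upsilon_u$ and the uniform inequality $\z^{-u/2}\exp(-c\|x-y\|^2/\z)\lesssim\|x-y\|^{-u}$ (constant depending on the fixed $u$), this produces exactly the desired factor $\|x-y\|^{-u}$ and reduces the assertion to the case $u=0$ with $C$ replaced by $C/2$. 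The retained factor $(\ee\bb)^{C/2}\le\exp\big(-\tfrac{C}{8\z}\|x-y\|^2\big)$ still decays super-polynomially as $\z\to0^+$, which is what tames the negative powers of $\z$ in the remaining expression near the origin.

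It then remains to prove the $u=0$ estimate, namely that the $\z$-integral of the remaining expression is controlled by $1/\mu_\a^+(B(x,\|x-y\|))$, where one uses the standard size of balls $\mu_\a^+(B(x,r))\simeq r^d\prod_{i=1}^d(x_i+r)^{2\a_i+1}$. Since $q_\pm$, the factor $\ee\bb$, and the measure $\Pi_{\a+\eta+\boldsymbol{1}+\eps}$ all have a product structure in the coordinates, the inner $s$-integral factorizes and is estimated coordinate by coordinate by means of \cite[(A.2),~(A.3)]{NoSz} and \cite[Lemma~A.3]{NoSz}; the resulting factors are then assembled together with the $\z$-integration exactly as in the proof of \cite[Lemma~2.6]{NoSz}.

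The main obstacle is precisely this last step: extracting from the monomials $x^{\eta-\rho\eta+2\eps-a\eps-\tau}y^{\eta-\xi\eta+2\eps-b\eps}$, the $s$-integral, and the $\z$-integration \emph{exactly} the reciprocal ball measure, uniformly in $x,y$. The delicate regime is $\a_i+\eta_i+\eps_i<1/2$, in which the density of $\Pi_{\a+\eta+\boldsymbol{1}+\eps}$ concentrates at the endpoints $s_i=\pm1$; it is in order to cover all $\a\in(-1,\infty)^d$ uniformly that the refined estimates of \cite{NoSz} are indispensable, whereas the restriction $\a\in[-1/2,\infty)^d$ would permit a considerably shorter argument. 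Because this computation is carried out in full in \cite[Lemma~2.6]{NoSz} and the present situation differs only in the explicit values of the exponents, the remaining details may be omitted.
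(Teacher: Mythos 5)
Your proposal is correct and follows essentially the same route as the paper, which itself gives no details and simply states that the lemma ``is proved in much the same fashion as \cite[Lemma~2.6]{NoSz}''; your reduction of the factor $\|x-y\|^{-u}$ via $q_{\pm}\ge\|x-y\|^2$ and $\z^{-u/2}e^{-c\|x-y\|^2/\z}\lesssim\|x-y\|^{-u}$, and your use of $D<d+|\a|$ for convergence near $\z=1$, are exactly the ingredients of that argument. The only point you slightly understate is that the presence of $\eta$ shifts the measure to $\Pi_{\a+\eta+\boldsymbol{1}+\eps}$ while the target remains $\mu^+_{\a}$ (not $\mu^+_{\a+\eta}$), so the extra monomials $x^{\eta-\rho\eta}y^{\eta-\xi\eta}$ must be absorbed in the final bookkeeping, but this is routine and is precisely what the analogous bridge lemmas in \cite{TZS2,TZS3} handle.
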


The following remark will be useful when estimating the kernels associated with multipliers of Laplace--Stieltjes type.

\begin{Remark}\label{rem:bridgeL-S}
The norm estimate in Lemma~\ref{lem:bridge} still holds true if $D = d + |\a|$, $p=\infty$ and $\tau = \boldsymbol{0}$.
\end{Remark}

\begin{Lemma}[{\cite[Lemma~4.3]{TZS3}}] \label{lem:theta}
Let $x,y,z\in\RRR$ and $s \in (-1,1)^d$. Then
\begin{gather*}
\frac{1}{4} q_{\pm}(x,y,s) \le q_{\pm}(z,y,s) \le 4 q_{\pm}(x,y,s),
\end{gather*}
provided that $\|x-y\|>2\|x-z\|$. Similarly, if $\|x-y\|>2\|y-z\|$ then
\begin{gather*}
\frac{1}{4} q_{\pm}(x,y,s) \le q_{\pm}(x,z,s) \le 4 q_{\pm}(x,y,s).
\end{gather*}
\end{Lemma}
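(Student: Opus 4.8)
The plan is to reduce the comparison of $q_{\pm}$ at $z$ and at $x$ to a statement about Euclidean norms, so that the reverse triangle inequality does all the work. The starting point is completing the square in each coordinate: since $x_i^2+y_i^2\pm 2x_iy_is_i=(x_i\pm s_iy_i)^2+(1-s_i^2)y_i^2$, we obtain
\begin{gather*}
q_{\pm}(x,y,s) = \sum_{i=1}^d (x_i \pm s_i y_i)^2 + \sum_{i=1}^d \big(1-s_i^2\big) y_i^2 = \big\| x \pm sy \big\|^2 + \|w\|^2,
\end{gather*}
where $sy=(s_1y_1,\dots,s_dy_d)$ and $w=\big(\sqrt{1-s_i^2}\,y_i\big)_{i=1}^d$. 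Thus $\sqrt{q_{\pm}(\cdot,y,s)}$ is exactly the Euclidean norm on $\R^{2d}$ of the stacked vector $(x\pm sy,\,w)$, and its dependence on the first argument is an isometric shift: replacing $x$ by $z$ changes that vector by precisely $(z-x,0)$, of length $\|z-x\|$.

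First I would record the two ingredients this viewpoint supplies. Since $x_i,y_i>0$ and $|s_i|<1$, we have $\pm 2x_iy_is_i\ge -2x_iy_i|s_i|\ge -2x_iy_i$, so each summand is at least $(x_i-y_i)^2$ and hence $q_{\pm}(x,y,s)\ge\|x-y\|^2$, i.e.\ $\sqrt{q_{\pm}(x,y,s)}\ge\|x-y\|$. Next, the reverse triangle inequality in $\R^{2d}$ gives $\big|\sqrt{q_{\pm}(z,y,s)}-\sqrt{q_{\pm}(x,y,s)}\big|\le\|z-x\|$. Under the hypothesis $\|x-y\|>2\|x-z\|$ these combine to $\|z-x\|<\tfrac12\|x-y\|\le\tfrac12\sqrt{q_{\pm}(x,y,s)}$, whence $\tfrac12\sqrt{q_{\pm}(x,y,s)}\le\sqrt{q_{\pm}(z,y,s)}\le\tfrac32\sqrt{q_{\pm}(x,y,s)}$; squaring yields $\tfrac14 q_{\pm}(x,y,s)\le q_{\pm}(z,y,s)\le\tfrac94 q_{\pm}(x,y,s)\le 4\,q_{\pm}(x,y,s)$, the first assertion. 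The second assertion, with the hypothesis $\|x-y\|>2\|y-z\|$, follows from the identical argument with $x$ and $y$ interchanged, since $q_{\pm}(x,y,s)$ is symmetric in its first two arguments.

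The one point I would emphasize is that everything must be kept at the level of square roots rather than of $q_{\pm}$ itself. A direct expansion $q_{\pm}(z,y,s)-q_{\pm}(x,y,s)=2\sum_i(z_i-x_i)(x_i\pm s_iy_i)+\|z-x\|^2$ with Cauchy--Schwarz does deliver the upper bound, but the resulting error $2\|z-x\|\sqrt{q_{\pm}(x,y,s)}+\|z-x\|^2$ already exceeds $q_{\pm}(x,y,s)$ when $\|z-x\|$ approaches $\tfrac12\sqrt{q_{\pm}(x,y,s)}$, so the lower bound collapses. Recasting $\sqrt{q_{\pm}(\cdot,y,s)}$ as a bona fide Euclidean norm is precisely what repairs this, because the factors $\tfrac12$ and $\tfrac32$ obtained there remain on the correct side of $1$ after squaring. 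No other step presents any real difficulty.
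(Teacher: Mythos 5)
Your proof is correct. Note that the paper itself gives no argument for this lemma --- it is imported verbatim from \cite[Lemma~4.3]{TZS3} --- and your reasoning (rewriting $q_{\pm}(x,y,s)=\|x\pm sy\|^2+\|w\|^2$ so that $\sqrt{q_{\pm}(\cdot,y,s)}$ becomes a Euclidean norm shifted isometrically in the first argument, combining the reverse triangle inequality with the bound $q_{\pm}(x,y,s)\ge\|x-y\|^2$, and invoking the symmetry of $q_{\pm}$ in its first two arguments for the second assertion) is precisely the standard argument used there, so nothing further is needed.
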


\begin{Lemma}[{\cite[Lemma~4.5]{TZS3}, \cite[Lemma~4.4]{CaSz}}] \label{lem:double}
Let $\a \in (-1 ,\infty)^d$ and $\gamma \in \mathbb{R}$ be fixed. We have
\begin{gather*}
\left( \frac{1}{\|z-y\|} \right)^{\gamma} \frac{1}{\mu^+_{\a}(B(z,\|z-y\|))}
\simeq\left( \frac{1}{\|x-y\|} \right)^{\gamma}\frac{1}{\mu^+_{\a}(B(x,\|x-y\|))}
\end{gather*}
on the set $\{(x,y,z) \in \RRR \times \RRR \times \RRR\colon \|x-y\|>2\|x-z\|\}$.
\end{Lemma}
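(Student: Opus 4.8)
The plan is to reduce the statement to two ingredients: the comparability of the distances $\|z-y\|$ and $\|x-y\|$ on the indicated set, and the doubling property of $\mu^+_{\a}$ (recall that $(\RRR,\mu^+_{\a},\|\cdot\|)$ is a space of homogeneous type, so $\mu^+_{\a}$ is doubling). Write $r=\|x-y\|$; the hypothesis $\|x-y\|>2\|x-z\|$ means $\|x-z\|<r/2$. First I would record the distance comparability: by the triangle inequality,
\begin{gather*}
\frac{1}{2}r \le r-\|x-z\| \le \|z-y\| \le \|x-z\|+r \le \frac{3}{2}r,
\end{gather*}
so that $\|z-y\|\simeq\|x-y\|$ with absolute constants, and hence $\|z-y\|^{-\gamma}\simeq\|x-y\|^{-\gamma}$ for the fixed exponent $\gamma\in\R$ (the comparability constant now depends on $\gamma$, which is harmless).

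Next I would compare the two balls and invoke doubling. Using $\|x-z\|<r/2$ and $\|z-y\|\le\frac{3}{2}r$ gives
\begin{gather*}
B(z,\|z-y\|)\subset B\big(x,\|x-z\|+\|z-y\|\big)\subset B(x,2r),
\end{gather*}
while, since $\|z-y\|>r/2$,
\begin{gather*}
B(x,r)\subset B\big(z,\|z-x\|+r\big)\subset B\Big(z,\frac{3}{2}r\Big)\subset B(z,3\|z-y\|).
\end{gather*}
Applying the doubling property to absorb the fixed dilation factors $2$ and $3$, these inclusions yield $\mu^+_{\a}(B(z,\|z-y\|))\le\mu^+_{\a}(B(x,2r))\lesssim\mu^+_{\a}(B(x,r))$ and, symmetrically, $\mu^+_{\a}(B(x,r))\le\mu^+_{\a}(B(z,3\|z-y\|))\lesssim\mu^+_{\a}(B(z,\|z-y\|))$, whence $\mu^+_{\a}(B(x,\|x-y\|))\simeq\mu^+_{\a}(B(z,\|z-y\|))$. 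Multiplying this with the factor comparability from the first step delivers the asserted equivalence, uniformly on the set $\{\|x-y\|>2\|x-z\|\}$.

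The argument is entirely routine, and I do not expect any genuine obstacle: the only structural input is that the product measure $d\mu^+_{\a}(x)=\prod_{i=1}^d x_i^{2\a_i+1}\,dx_i$ on $\RRR$ is doubling, which is standard. The single point deserving a moment's care is uniformity in $(x,y,z)$, but this is automatic, since every estimate above uses only the triangle inequality together with the global doubling constant and never depends on the particular points.
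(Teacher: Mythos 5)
Your argument is correct and is exactly the standard one: the triangle inequality gives $\|z-y\|\simeq\|x-y\|$ on the set in question, and the mutual ball inclusions combined with the doubling property of $\mu^+_{\a}$ (which holds since $(\RRR,\mu^+_{\a},\|\cdot\|)$ is a space of homogeneous type) give comparability of the ball measures. The paper does not reprove this lemma but cites \cite{TZS3} and \cite{CaSz}, whose proofs proceed along the same lines as yours.
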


The next two lemmas will be crucial when dealing with the kernels associated with the Lusin area integrals.

\begin{Lemma}[{\cite[Lemma~4.7]{TZS3}}] \label{lem:qz}
Let $x,y\in\RRR$, $z\in\RR$, $s\in(-1,1)^d$. Then
\begin{gather*}
q_{\pm}(x+z,y,s) \geq \frac{1}{2}q_{\pm}(x,y,s) - \|z\|^{2}.
\end{gather*}
\end{Lemma}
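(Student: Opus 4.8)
The plan is to reduce the claim to an elementary vector inequality by expanding $q_{\pm}(x+z,y,s)$ and isolating the terms that depend on $z$. First I would record the identity
\begin{gather*}
q_{\pm}(x+z,y,s) = q_{\pm}(x,y,s) + 2\langle v_{\pm}, z\rangle + \|z\|^2,
\end{gather*}
where $v_{\pm} = (x_1 \pm y_1 s_1, \ldots, x_d \pm y_d s_d)$ and $\langle \cdot\,,\cdot\rangle$ denotes the Euclidean inner product. This follows at once from $\|x+z\|^2 = \|x\|^2 + 2\langle x,z\rangle + \|z\|^2$ together with the splitting $\pm 2\sum_i (x_i+z_i) y_i s_i = \pm 2\sum_i x_i y_i s_i \pm 2\sum_i z_i y_i s_i$, the last sum being exactly $2\langle v_{\pm}-x, z\rangle$.

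The key structural observation is that $\|v_{\pm}\|^2 \le q_{\pm}(x,y,s)$. Indeed, completing the square coordinatewise gives
\begin{gather*}
q_{\pm}(x,y,s) = \sum_{i=1}^d (x_i \pm y_i s_i)^2 + \sum_{i=1}^d y_i^2 \big(1-s_i^2\big) = \|v_{\pm}\|^2 + \sum_{i=1}^d y_i^2\big(1-s_i^2\big),
\end{gather*}
and the second sum is nonnegative precisely because $|s_i| < 1$. This is the only place where the hypothesis $s \in (-1,1)^d$ is used.

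Next I would bound the cross term from below. By Cauchy--Schwarz and the weighted arithmetic--geometric mean inequality $2ab \le \frac{1}{2}a^2 + 2b^2$ applied with $a = \|v_{\pm}\|$ and $b = \|z\|$,
\begin{gather*}
2\langle v_{\pm}, z\rangle \ge -2\|v_{\pm}\|\,\|z\| \ge -\frac{1}{2}\|v_{\pm}\|^2 - 2\|z\|^2.
\end{gather*}
Substituting this into the expansion and then invoking $\|v_{\pm}\|^2 \le q_{\pm}(x,y,s)$ yields
\begin{gather*}
q_{\pm}(x+z,y,s) \ge q_{\pm}(x,y,s) - \frac{1}{2}\|v_{\pm}\|^2 - \|z\|^2 \ge \frac{1}{2} q_{\pm}(x,y,s) - \|z\|^2,
\end{gather*}
which is the asserted inequality.

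There is no serious obstacle here, since the statement is elementary. The only point requiring a little care is the choice of splitting constant in the arithmetic--geometric mean step: the inequality $2ab \le \frac{1}{2}a^2 + 2b^2$, i.e.\ $\frac{1}{2}(a-2b)^2 \ge 0$, is exactly what produces the coefficient $\frac{1}{2}$ in front of $q_{\pm}$ together with the clean remainder $-\|z\|^2$; a generic split would give a worse constant and would not match the stated bound. I would also remark that the whole argument is uniform in $s \in (-1,1)^d$ and in $x,y$, which is what the subsequent applications to the Lusin area integrals require.
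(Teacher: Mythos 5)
Your proof is correct. Note that the paper does not reprove this lemma at all --- it simply quotes it from \cite[Lemma~4.7]{TZS3} --- and your argument (the coordinatewise completion of squares $q_{\pm}(x,y,s)=\|v_{\pm}\|^{2}+\sum_{i} y_i^2(1-s_i^2)$ combined with the elementary inequality $2ab\le \tfrac12 a^2+2b^2$) is exactly the standard one behind that reference, so there is nothing to add.
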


The result below is a combination of \cite[Lemmas 4.6--4.8]{CaSz}.

\begin{Lemma}[{\cite[Lemmas 4.6--4.8]{CaSz}}] \label{lem:intLusin}
Let $\a \in (-1,\infty)^d$ be fixed. Then there exists $\gamma=\gamma(\a) \in (0,1/2]$ such that
\begin{itemize}\itemsep=0pt
\item[$(a)$]
\begin{gather*}
\int_{\|z\|<\sqrt{t}} \Xi_{\a}(x,z,t) \chi_{ \{ x+z \in \RRR \} } \, dz \simeq 1, \qquad x \in \RRR, \quad t>0.
\end{gather*}
\item[$(b)$]
\begin{gather*}
 \int_{\|z\|<\sqrt{t}} \chi_{ \{ x+z, \, x'+z \in \RRR \} }\big| \sqrt{ \Xi_\a(x,z,t) } - \sqrt{ \Xi_\a(x',z,t) }\big|^2 \, dz \\
 \qquad{} \lesssim\left( \frac{\|x-x'\|^2}{t} \right)^{\gamma} \lesssim\left( \frac{\|x-x'\|^2}{\z} \right)^{\gamma},
\end{gather*}
uniformly in $x,x' \in \RRR$ and $t>0$; here $\zeta=\zeta(t)=\tanh t$.
\item[$(c)$]
\begin{gather*}
\int_{\|z\|<\sqrt{t}} \chi_{ \{ x+z \in \RRR, \, x'+z \notin \RRR \} } \Xi_{\a}(x,z,t) \, dz\lesssim
\left( \frac{\|x-x'\|^2}{t} \right)^{\gamma}\lesssim\left( \frac{\|x-x'\|^2}{\z} \right)^{\gamma},
\end{gather*}
uniformly in $x,x' \in \RRR$ and $t>0$; here $\zeta=\zeta(t)=\tanh t$.
\end{itemize}
Moreover, in items $(b)$ and $(c)$ one can take any $\gamma \in (0,1/2]$ satisfying $\gamma < \min\limits_{1\le i \le d}(\a_i +1)$.
\end{Lemma}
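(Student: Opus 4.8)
The plan is to exploit the product structure of $\Xi_{\a}$ together with the change of variables $u_i = x_i + z_i$, which turns each one-dimensional factor of the integrand into the density of $\mu_{\a_i}^+$, and then to handle the Euclidean ball $\{\|z\|<\sqrt t\}$ by sandwiching it between two cubes of comparable side length $\simeq \sqrt t$. Throughout, the natural scale is $\sqrt t$, so every bound is phrased in the dimensionless ratio $\|x-x'\|/\sqrt t$, and the doubling property of each one-dimensional measure $d\mu_{\a_i}^+(u)=u^{2\a_i+1}\,du$ on $\R_+$ (valid precisely because $\a_i>-1$) will be used repeatedly to replace $\sqrt t$ by any comparable radius at the cost of a constant.

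For part $(a)$ I would first observe that for a single coordinate the change of variables $u=x_i+z_i$ gives $\int_{|z_i|<r}\frac{(x_i+z_i)^{2\a_i+1}}{V_{\sqrt t}^{\a_i,+}(x_i)}\chi_{\{x_i+z_i>0\}}\,dz_i=\frac{V_r^{\a_i,+}(x_i)}{V_{\sqrt t}^{\a_i,+}(x_i)}$, which equals $1$ when $r=\sqrt t$. Since $\Xi_{\a}\ge 0$ and the cube $\{|z_i|<\sqrt{t/d}\}$ is contained in the ball $\{\|z\|<\sqrt t\}$, which in turn lies in $\{|z_i|<\sqrt t\}$, the integral in $(a)$ is squeezed between $\prod_i V_{\sqrt{t/d}}^{\a_i,+}(x_i)/V_{\sqrt t}^{\a_i,+}(x_i)$ and $1$; the lower bound is $\gtrsim 1$ by doubling, which yields $(a)$.

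Parts $(b)$ and $(c)$ encode, respectively, the H\"older continuity of $x\mapsto \sqrt{\Xi_{\a}(x,\cdot,t)}$ and the smallness of the boundary-mismatch region, both at scale $\sqrt t$. I would reduce each to $d=1$ by telescoping the product $\sqrt{\Xi_{\a}}=\prod_i (x_i+z_i)^{\a_i+1/2}/\sqrt{V_{\sqrt t}^{\a_i,+}(x_i)}$ into a sum of terms in which only one coordinate is switched from $x$ to $x'$, bounding the remaining factors by part $(a)$. For $(c)$ the one-coordinate boundary slab $\{z_i\colon x_i+z_i>0\ge x_i'+z_i\}$ forces $0<x_i+z_i\le |x_i-x_i'|$, and since nonemptiness under $|z_i|<\sqrt t$ forces $x_i\lesssim\sqrt t$ (whence $V_{\sqrt t}^{\a_i,+}(x_i)\simeq t^{\a_i+1}$), the change of variables yields a contribution $\simeq (|x_i-x_i'|^2/t)^{\a_i+1}$; lowering the exponent to any $\gamma<\a_i+1$, legitimate in the regime $\|x-x'\|<\sqrt t$, gives the claim. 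Part $(b)$ is analogous but requires an honest H\"older estimate for $u\mapsto u^{\a_i+1/2}$ and for $x_i\mapsto V_{\sqrt t}^{\a_i,+}(x_i)^{-1/2}$, again with the worst behaviour near $x_i=0$.

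The hard part will be these two continuity estimates near the boundary $\{x_i=0\}$, where the weight $(x_i+z_i)^{2\a_i+1}$ degenerates or blows up and must be balanced against the normalizing factor $V_{\sqrt t}^{\a_i,+}$; this interplay is exactly what pins down the admissible range of $\gamma$, namely the cap $\gamma\le 1/2$ produced by the square-root structure and the parabolic scaling, together with the constraint $\gamma<\min_i(\a_i+1)$ forced by integrability of the weight across the boundary slab in $(c)$. The case $\a_i<-1/2$, where $\sqrt{\Xi_{\a}}$ is genuinely singular, is the most delicate and is precisely where the refined argument of \cite{CaSz} is needed; I would follow that analysis, to which the present statement is in any case attributed.
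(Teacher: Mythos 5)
The paper itself offers no proof of this lemma: it is imported verbatim from \cite[Lemmas~4.6--4.8]{CaSz}, so there is nothing internal to compare against beyond that citation. Your sketch is consistent with how those lemmas are actually proved: the reduction to one dimension via the product structure of $\Xi_{\a}$, the change of variables $u_i=x_i+z_i$ turning each factor into a normalized piece of $\mu_{\a_i}^{+}$, the sandwiching of the Euclidean ball between cubes plus doubling for part~$(a)$, and the observation in part~$(c)$ that the mismatch slab forces $0<x_j+z_j\le|x_j-x_j'|$ and $x_j\lesssim\sqrt{t}$, yielding $(|x_j-x_j'|^2/t)^{\a_j+1}$ and hence any exponent $\gamma\le\a_j+1$ in the nontrivial regime $\|x-x'\|\le\sqrt{t}$ (the complementary regime being absorbed by part~$(a)$); all of this is correct and essentially complete. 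The one genuine shortfall is part~$(b)$: the telescoping reduction to a single coordinate is fine, but the resulting one-dimensional H\"older estimate for $z\mapsto (x+z)^{\a+1/2}/\sqrt{V_{\sqrt t}^{\a,+}(x)}$ near $x=0$ --- which is where both constraints $\gamma\le 1/2$ and $\gamma<\a_i+1$ are actually generated, especially for $\a_i<-1/2$ --- is not carried out but deferred to \cite{CaSz}. Since the paper does exactly the same (and the statement is explicitly attributed there), this is acceptable, but be aware that the substantive analytic content of the lemma lives precisely in that deferred estimate.
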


\subsection{Proofs of the standard estimates}

In the proof of Theorem~\ref{thm:kerest} we tacitly assume that passing with the dif\/ferentiation in~$x_i$ and~$y_i$ under integrals against~$dt$ and~$d\nu(t)$ is legitimate. Actually, such manipulations can easily be justif\/ied with the aid of the dominated convergence theorem and the estimates obtained in Lemma~\ref{lem:heatEST} and along the proof of Theorem~\ref{thm:kerest}.

\begin{proof}[Proof of Theorem~\ref{thm:kerest}] We will treat each of the kernels separately.

\textbf{The case of $\mathfrak{U}^{\alpha, \eta, +}(x,y)$.} The growth condition \eqref{gr} is a direct consequence of Lemma~\ref{lem:heatEST} (applied with $r = l = n = \boldsymbol{0}$ and $m = 0$) and Lemma~\ref{lem:bridge} (specif\/ied to $p = \infty$, $W = 1$, $C = 1/2$, $\tau=\boldsymbol{0}$, $D = u = 0$).

To verify the smoothness estimates, for symmetry reasons, it suf\/f\/ices to show only \eqref{sm1}. By the mean value theorem we have
\begin{gather*}
| \mathfrak{G}_t^{\a,\eta,+} (x,y) - \mathfrak{G}_t^{\a,\eta,+} (x',y) |\le
\|x - x'\| \left\| \nabla_{\!x} \mathfrak{G}_t^{\a,\eta,+} (x,y) \big|_{x=\t} \right\|,
\end{gather*}
where $\t = \t(t,x,x',y)$ is a convex combination of $x$ and $x'$. Now, applying sequently Lemma~\ref{lem:heatEST} (with $r = n = \boldsymbol{0}$, $m = 0$ and $l = e_i$, $i=1, \ldots, d$),
the inequalities
\begin{gather} \label{ineq1}
 \t \le x \vee x' , \qquad \|x - \t\| \le \|x-x'\| , \qquad \|x - x \vee x'\| \le \|x - x'\|,
\end{gather}
and Lemma~\ref{lem:theta} twice (f\/irst with $z = \t$ and then with $z = x \vee x'$), we obtain
\begin{gather*}
| \mathfrak{G}_t^{\a,\eta,+} (x,y) - \mathfrak{G}_t^{\a,\eta,+} (x',y) | \\
\qquad {} \lesssim \|x - x'\| \sum_{\substack{ \eps, \rho, \xi \in \{0,1\}^d \\
a, b \in \{0,1,2\}^d }} (x \vee x')^{\eta - \rho\eta + 2\eps - a\eps} y^{\eta - \xi \eta + 2\eps - b \eps}
\big(1-\zeta^2\big)^{d + |\a| + |\eta| + 2|\eps|} \\
\qquad\quad{} \times \zeta^{-d - |\a| - |\eta| - 2|\eps| -1/2 + (|\rho \eta| + |a \eps| + |\xi \eta| + |b \eps| )/2 } \\
 \qquad \quad {}\times 	\int \big(\e\eee\big)^{1/32} \, \piint ,
\end{gather*}
provided that $\|x-y\|> 2\|x-x'\|$. This, together with Lemma~\ref{lem:bridge} (with $p = \infty$, $W = 1$, $C = 1/32$, $\tau = \boldsymbol{0}$, $D = 0$
and $u = 1$) and Lemma~\ref{lem:double} (taken with $\gamma=1$ and $z=x \vee x'$), produces the required estimate.

\textbf{The case of $\mathfrak{R}_{n,\om}^{\alpha,\eta,+}(x,y)$.}
The growth bound is an easy consequence of
Lemma~\ref{lem:heatEST} (specif\/ied to $r = l = \boldsymbol{0}$, $m = 0$)
and Lemma~\ref{lem:bridge} (with $p = 1$, $W = |n|/2$, $C = 1/2$, $\tau = \boldsymbol{0}$, $D = u = 0$).

To prove the gradient bound \eqref{grad} it is enough to check that
\begin{gather*}
\left\| \big\| \nabla_{\!x,y} \delta_{\eta,n,\om,x} \mathfrak{G}_t^{\a,\eta,+} (x,y) \big\| \right\|_{L^1(\R_+,t^{|n|/2 - 1}dt)} \\
 \qquad {} \lesssim
\frac{1}{\|x-y\|\mu^+_{\a}(B(x,\|x-y\|))}, \qquad x,y \in \RRR, \qquad x \ne y.
\end{gather*}
This, however, follows by combining Lemma~\ref{lem:heatEST} (taken with $m = 0$ and either $r = e_i$, $l = \boldsymbol{0}$ or $r = \boldsymbol{0}$, $l =e_i$, $i = 1, \ldots, d$) with Lemma~\ref{lem:bridge} (specif\/ied to $p = 1$, $W = |n|/2$, $C = 1/2$, $\tau = \boldsymbol{0}$, $D = 0$ and $u = 1$).

\textbf{The case of $\mathfrak{K}^{\alpha, \eta, +}_{\psi}(x,y)$.} Since $\psi$ is bounded, the growth condition is a straightforward consequence of Lemma~\ref{lem:heatEST} (with $r = l = n = \boldsymbol{0}$ and $m = 1$) and Lemma~\ref{lem:bridge} (selecting $p = 1$, $W = 1$, $C = 1/2$, $\tau = \boldsymbol{0}$, $D = u = 0$).

Next we pass to proving the gradient estimate \eqref{grad}. Once again, using the boundedness of~$\psi$ and for symmetry reasons, it is enough to verify that
\begin{gather*}
\left\| \big\| \nabla_{\!x} \partial_t \mathfrak{G}_t^{\a,\eta,+} (x,y) \big\| \right\|_{L^1(\R_+,dt)}\lesssim
\frac{1}{\|x-y\|\mu^+_{\a}(B(x,\|x-y\|))},\qquad x,y \in \RRR, \qquad x \ne y.
\end{gather*}
Applying Lemma~\ref{lem:heatEST} (with $r = n = \boldsymbol{0}$, $m = 1$ and $l =e_i$, $i = 1, \ldots, d$) together with Lemma~\ref{lem:bridge} (choosing $p = 1$, $W = 1$, $C = 1/2$, $\tau = \boldsymbol{0}$, $D = 0$ and $u = 1$) we get the asserted estimate.

\textbf{The case of $\mathfrak{K}^{\alpha, \eta, +}_{\nu}(x,y)$.} By the assumption \eqref{assum} the growth bound is reduced to showing that
\begin{gather*}
\big\|e^{t \lambda_0^\a} \mathfrak{G}_t^{\a,\eta,+} (x,y) \big\|_{L^\infty(\R_+,dt)}
\lesssim\frac{1}{\mu^+_{\a}(B(x,\|x-y\|))}, \qquad x,y \in \RRR, \qquad x \ne y.
\end{gather*}
This, however, follows from Lemma~\ref{lem:heatEST} (applied with $r = l = n = \boldsymbol{0}$, $m = 0$) and Remark~\ref{rem:bridgeL-S} (with $W = 1$, $C = 1/2$, $u = 0$).

In order to prove the gradient estimate \eqref{grad}, for symmetry reasons, it suf\/f\/ices to verify that
\begin{gather*}
\left\| e^{t \lambda_0^\a} \big\| \nabla_{\!x} \mathfrak{G}_t^{\a,\eta,+} (x,y) \big\| \right\|_{L^\infty(\R_+,dt)}
\lesssim\frac{1}{\|x-y\|\mu^+_{\a}(B(x,\|x-y\|))},\qquad x,y \in \RRR, \qquad x \ne y.
\end{gather*}
Combining Lemma~\ref{lem:heatEST} (taken with $r = n = \boldsymbol{0}$, $m = 0$ and $l =e_i$, $i = 1, \ldots, d$) with Remark~\ref{rem:bridgeL-S} (with $W = 1$, $C = 1/2$, $u = 1$) we get the required bound.

\textbf{The case of $\mathfrak{H}^{\alpha, \eta, +}_{n,m,\om}(x,y)$.} The growth condition is a direct consequence of Lemma~\ref{lem:heatEST} (specif\/ied to $r = l = \boldsymbol{0}$) and Lemma~\ref{lem:bridge} (taken with $p = 2$, $W = |n| + 2m$, $C = 1/2$, $\tau = \boldsymbol{0}$, $D = u = 0$).

{\allowdisplaybreaks We pass to proving the smoothness estimates. We focus on showing~\eqref{sm1}, the other bound can be justif\/ied in a similar way. Using sequently the mean value theorem, Lemma~\ref{lem:heatEST} (with either $r = e_i$, $l = \boldsymbol{0}$ or $r = \boldsymbol{0}$, $l =e_i$, $i = 1, \ldots, d$), the inequalities~\eqref{ineq1} and Lemma~\ref{lem:theta} twice (f\/irst with $z = \t$ and then with $z = x \vee x'$) we see that
\begin{gather*}
 \Big| \partial^m_t \delta_{\eta,n,\om,x} \mathfrak{G}_t^{\a,\eta,+} (x,y) - \partial^m_t
\delta_{\eta,n,\om,x} \mathfrak{G}_t^{\a,\eta,+} (x,y) \big|_{x=x'} \Big| \\
 \qquad{} \le \|x - x'\| \left\| \nabla_{\!x,y} \partial^m_t \delta_{\eta,n,\om,x} \mathfrak{G}_t^{\a,\eta,+} (x,y) \big|_{x=\t} \right\| \\
 \qquad{} \lesssim \|x - x'\|\!\sum_{\substack{ \eps, \rho, \xi \in \{0,1\}^d \\ a, b \in \{0,1,2\}^d }}\!
(x \vee x')^{\eta - \rho\eta + 2\eps - a\eps} y^{\eta - \xi \eta + 2\eps - b \eps}
\big(1-\zeta^2\big)^{d + |\a| + |\eta| + 2|\eps|} \zeta^{-d - |\a| - |\eta| - 2|\eps|}\\
\qquad \quad {}\times\zeta^{- m - |n|/2 - 1/2 +(|\rho \eta| + |a \eps| + |\xi \eta| + |b \eps| )/2} \int \big(\e\eee\big)^{1/32} \piint ,
\end{gather*}
provided that $\|x-y\|> 2\|x-x'\|$; here $\t = \t (t,x,x',y)$ is a convex combination of $x$ and $x'$. Now an application of Lemma~\ref{lem:bridge} (choosing $p = 2$, $W = |n| + 2m$, $C = 1/32$, $\tau = \boldsymbol{0}$, $D = 0$, $u = 1$) and then Lemma~\ref{lem:double} (with $\gamma=1$ and $z=x \vee x'$) produces the required bound.}

\textbf{The case of $\mathfrak{S}^{\alpha, \eta, +}_{n,m,\om}(x,y)$.} We f\/irst deal with the growth estimate. Fix a constant $0 < D \le 1/4$ such that $D < d + |\a|$. We show that
\begin{gather}\label{ineq3}
\big( \e \zz \big)^{1/2} \lesssim \big(1 - \z^2\big)^{-D} \big( \e \bbb \big)^{D},
\end{gather}
uniformly in $x,y \in \RRR$, $s \in (-1,1)^d$ and $(z,t) \in A$; here and later on $\zeta=\zeta(t)=\tanh t$. Indeed, using Lemma~\ref{lem:qz} we obtain
\begin{gather*}
\big( \e \zz \big)^{1/2}\le\big( \e \zz \big)^{2D} \\
\hphantom{\big( \e \zz \big)^{1/2}}{} \le \big( \e \bbb \big)^{D} \exp\left( D \left( \frac{1}{4\z} + \frac{\z}{4} \right) \lo (\z) \right),
\end{gather*}
provided that $x,y \in \RRR$, $s \in (-1,1)^d$ and $(z,t) \in A$. Now a simple analysis of the second factor in the last expression above gives us~\eqref{ineq3}.

Taking into account Lemma~\ref{lem:heatEST} (specif\/ied to $r = l = \boldsymbol{0}$), \eqref{ineq3} and the estimate
\begin{gather}\label{ineq2}
|(x+z)^\kappa| \le (x+\sqrt{t}\mathbf{1})^\kappa \lesssim \sum_{\boldsymbol{0}\le \tau \le \kappa} x^{\kappa - \tau} \big( \lo (\z) \big)^{|\tau|/2},
\qquad x\in\RRR, \qquad (z,t) \in A,
\end{gather}
where $\kappa \in \mathbb{N}^d$ is f\/ixed, we get
\begin{gather} \nonumber
 \Big| \partial_t^m \delta_{\eta, n, \om, \mathbf{x}} \mathfrak{G}_{t}^{\a,\eta,+}(\mathbf{x},y) \big|_{\mathbf{x} = x+z} \Big| \\
 \qquad{} \lesssim \sum_{\substack{ \eps, \rho, \xi \in \{0,1\}^d \\ a, b \in \{0,1,2\}^d }}
\sum_{\boldsymbol{0} \le \tau \le \eta - \rho\eta + 2\eps - a\eps } x^{\eta - \rho\eta + 2\eps - a\eps - \tau } y^{\eta - \xi \eta + 2\eps - b \eps}
\big( \lo (\z) \big)^{|\tau|/2} \label{ineq4}\\ \nonumber
\qquad\quad{} \times \big(1-\zeta^2\big)^{d + |\a| + |\eta| + 2|\eps| - D}
\zeta^{-d - |\a| - |\eta| - 2|\eps| - m - |n|/2 + (|\rho \eta| + |a \eps| + |\xi \eta| + |b \eps| )/2 } \\
 \qquad \quad {} \times 	\int \big( \e\bbb \big)^D \piint , \nonumber
\end{gather}
for $x,y \in \RRR$ and $(z,t) \in A$ such that $x+z \in \RRR$. Since the right-hand side above is independent of~$z$, an application of Lemma~\ref{lem:intLusin}(a) and then Lemma~\ref{lem:bridge} (specif\/ied to $p = 2$, $W = |n| + 2m$, $C = D$ and $u = 0$) leads to the desired conclusion.

Next we verify the f\/irst smoothness condition. Precisely, we will show~\eqref{sm1} with any f\/ixed $\gamma \in (0,1/2]$ satisfying $\gamma < \min\limits_{1\le i \le d} (\a_i + 1)$. In what follows it is natural to split the region of integration~$A$ into four subsets, depending on whether $x+z$, $x'+z$ belong to $\RRR$ or not. Let
\begin{gather*}
 A_1 = \big\{ (z,t) \in A \colon x+z \in \RRR,\, x'+z \in \RRR \big\},\\
 A_2 = \big\{ (z,t) \in A \colon x+z \in \RRR, \, x'+z \notin \RRR \big\},\\
 A_3 = \big\{ (z,t) \in A \colon x+z \notin \RRR, \, x'+z \in \RRR \big\},\\
 A_4 = \big\{ (z,t) \in A \colon x+z \notin \RRR, \, x'+z \notin \RRR \big\}.
\end{gather*}
Since in case of $A_4$ there is nothing to do and the case of $A_3$ is analogous to $A_2$ (the only dif\/ference is that at the end of reasoning related to $A_3$ one should use Lemma~\ref{lem:double}), we analyze only the two essential cases.

{\bf Case 1:} \textbf{The norm related to $L^2(A_{1},t^{|n| + 2m - 1}dzdt)$.} By the triangle inequality
\begin{gather*}
\left| \partial_t^m \delta_{\eta, n, \om, \mathbf{x}} \mathfrak{G}_{t}^{\a,\eta,+}(\mathbf{x},y)\big|_{\mathbf{x} = x+z}
\sqrt{\Xi_{\a}(x,z,t)}- \partial_t^m \delta_{\eta, n, \om, \mathbf{x}} \mathfrak{G}_{t}^{\a,\eta,+}(\mathbf{x},y)
\big|_{\mathbf{x} = x'+z}\sqrt{\Xi_{\a}(x',z,t)}\right| \\
 \qquad {}\le \left| \partial_t^m \delta_{\eta, n, \om, \mathbf{x}} \mathfrak{G}_{t}^{\a,\eta,+}(\mathbf{x},y)\big|_{\mathbf{x} = x+z}-
\partial_t^m \delta_{\eta, n, \om, \mathbf{x}} \mathfrak{G}_{t}^{\a,\eta,+}(\mathbf{x},y)
\big|_{\mathbf{x} = x'+z}\right| \sqrt{\Xi_{\a}(x',z,t)} \\
 \qquad\quad{} + \left| \partial_t^m \delta_{\eta, n, \om, \mathbf{x}} \mathfrak{G}_{t}^{\a,\eta,+}(\mathbf{x},y)
\big|_{\mathbf{x} = x+z}\right| \left| \sqrt{\Xi_{\a}(x,z,t)} - \sqrt{\Xi_{\a}(x',z,t)} \right| \\
 \qquad {}\equiv I_1(x,x',y,z,t) + I_2(x,x',y,z,t).
\end{gather*}
We treat $I_1$ and $I_2$ separately. Using successively the mean value theorem, Lemma~\ref{lem:heatEST} (with $r = \boldsymbol{0}$ and $l = e_i$, $i = 1, \ldots, d$), \eqref{ineq2}, \eqref{ineq1}, \eqref{ineq3} and f\/inally Lemma~\ref{lem:theta} twice (f\/irst with $z = \t$ and then with $z = x \vee x'$)
we arrive at
\begin{gather*}
 I_1(x,x',y,z,t) \\ \qquad{} \lesssim
\|x - x'\| \sum_{\substack{ \eps, \rho, \xi \in \{0,1\}^d \\ a, b \in \{0,1,2\}^d }}
\sum_{\boldsymbol{0} \le \tau \le \eta - \rho\eta + 2\eps - a\eps }
(x \vee x')^{\eta - \rho\eta + 2\eps - a\eps - \tau } y^{\eta - \xi \eta + 2\eps - b \eps}
\big( \lo (\z) \big)^{|\tau|/2} \\
 \qquad \quad{} \times \big(1-\zeta^2\big)^{d + |\a| + |\eta| + 2|\eps| - D} \zeta^{-d - |\a| - |\eta| - 2|\eps| - m - |n|/2 -1/2 +
(|\rho \eta| + |a \eps| + |\xi \eta| + |b \eps| )/2 } \\
\qquad \quad {} \times 	\int \big( \e\eee \big)^{D/16} \piint \sqrt{\Xi_{\a}(x',z,t)},
\end{gather*}
provided that $(z,t) \in A_1$ and $\|x-y\|>2\|x-x'\|$. Now the conclusion for $I_1$ follows from Lemma~\ref{lem:intLusin}(a), Lemma~\ref{lem:bridge}
(specif\/ied to $p = 2$, $W = |n| + 2m$, $C = D/16$ and $u = 1$) and Lemma~\ref{lem:double} (applied with $\gamma=1$ and $z=x \vee x'$).

To estimate the norm of $I_2$ we use \eqref{ineq4} and Lemma~\ref{lem:intLusin}(b) to obtain
\begin{gather}\nonumber
\left\| I_2(x,x',y,z,t) \right\|_{L^2(A_1,t^{|n| + 2m -1} dzdt)} \\
 \qquad {} \lesssim \|x - x'\|^{\gamma} \!\sum_{\substack{ \eps, \rho, \xi \in \{0,1\}^d \\
a, b \in \{0,1,2\}^d }}\sum_{\boldsymbol{0} \le \tau \le \eta - \rho\eta + 2\eps - a\eps }
\!\Big\|
x^{\eta - \rho\eta + 2\eps - a\eps - \tau } y^{\eta - \xi \eta + 2\eps - b \eps}
\big( \lo (\z) \big)^{|\tau|/2}\!\!\label{ineq5} \\
 \qquad \quad {} \times \big(1-\zeta^2\big)^{d + |\a| + |\eta| + 2|\eps| - D}\zeta^{-d - |\a| - |\eta| - 2|\eps| - m - |n|/2 +
(|\rho \eta| + |a \eps| + |\xi \eta| + |b \eps| )/2 -\gamma/2}\nonumber \\ \nonumber
 \qquad \quad{} \times	\int \big( \e\bbb \big)^{D} \piint
\Big\|_{L^2(\R_+,t^{|n| + 2m -1} dt)}, \qquad x,x',y \in \RRR.
\end{gather}
This, however, in view of Lemma~\ref{lem:bridge} (taken with $p = 2$, $W = |n| + 2m$, $C = D$ and $u = \gamma$) gives the desired estimate for $I_2$ and therefore f\/inishes the analysis related to $A_1$.

{\bf Case 2:} \textbf{The norm related to $L^2(A_{2},t^{|n| + 2m - 1}dzdt).$} Since $\mathfrak{S}^{\alpha, \eta, +}_{n,m,\om}(x',y) =0$, our aim is to prove that
\begin{gather}\label{ineq6}
\|\mathfrak{S}^{\alpha, \eta, +}_{n,m,\om}(x,y)\|_{L^2(A_{2},t^{|n| + 2m - 1}dzdt)}
\lesssim \left(\frac{\|x-x'\|}{\|x-y\|} \right)^{\gamma} \frac{1}{\mu_\a^+(B(x,\|x-y\|))},
\end{gather}
for $\|x-y\|>2\|x-x'\|$. Taking into account \eqref{ineq4} and then applying Lemma~\ref{lem:intLusin}(c) we get{\samepage
\begin{gather*}
\| \mathfrak{S}^{\alpha, \eta, +}_{n,m,\om}(x,y)\|_{L^2(A_{2},t^{|n| + 2m - 1}dzdt)} \\
 \qquad{} \lesssim \|x-x'\|^{\gamma} \sum_{\substack{ \eps, \rho, \xi \in \{0,1\}^d \\
a, b \in \{0,1,2\}^d }}\sum_{\boldsymbol{0} \le \tau \le \eta - \rho\eta + 2\eps - a\eps }
\Big\| x^{\eta - \rho\eta + 2\eps - a\eps - \tau } y^{\eta - \xi \eta + 2\eps - b \eps}\big( \lo (\z) \big)^{|\tau|/2} \\
 \qquad \quad {}\times \big(1-\zeta^2\big)^{d + |\a| + |\eta| + 2|\eps| - D} \zeta^{-d - |\a| - |\eta| - 2|\eps| - m - |n|/2 +
(|\rho \eta| + |a \eps| + |\xi \eta| + |b \eps| )/2 -\gamma/2} \\
 \qquad \quad \times	\int \big( \e\bbb \big)^{D} \piint \Big\|_{L^2(\R_+,t^{|n| + 2m - 1}dt)}, \qquad x,x',y \in \RRR.
\end{gather*}
The right-hand side here coincides with the right-hand side of \eqref{ineq5}, and \eqref{ineq6} follows.}

Finally, we focus on the second smoothness condition \eqref{sm2}. We will prove it with $\gamma = 1$. Applying sequently the mean value theorem,
Lemma~\ref{lem:heatEST} (choosing $r = e_i$, $l = \boldsymbol{0}$, $i = 1, \ldots, d$), \eqref{ineq2}, \eqref{ineq3} and then Lemma~\ref{lem:theta} twice (f\/irst with $z = \t$ and then with $z = y \vee y'$) we obtain
\begin{gather*}
\left| \partial_t^m \delta_{\eta, n, \om, \mathbf{x}} \mathfrak{G}_{t}^{\a,\eta,+}(\mathbf{x},y)
\big|_{\mathbf{x} = x+z} - \partial_t^m \delta_{\eta, n, \om, \mathbf{x}} \mathfrak{G}_{t}^{\a,\eta,+}(\mathbf{x},y')
\big|_{\mathbf{x} = x+z} \right| \sqrt{\Xi_{\a}(x,z,t)} \chi_{\{x+z\in\RRR\}} \\
 \qquad {} \lesssim \| y - y' \| \sum_{\substack{ \eps, \rho, \xi \in \{0,1\}^d \\
a, b \in \{0,1,2\}^d }} \sum_{\boldsymbol{0} \le \tau \le \eta - \rho\eta + 2\eps - a\eps }
x^{\eta - \rho\eta + 2\eps - a\eps - \tau } (y \vee y')^{\eta - \xi \eta + 2\eps - b \eps}
\big( \lo (\z) \big)^{|\tau|/2} \\
 \qquad \quad {} \times \big(1-\zeta^2\big)^{d + |\a| + |\eta| + 2|\eps| - D}
\zeta^{-d - |\a| - |\eta| - 2|\eps| - m - |n|/2 - 1/2 +
(|\rho \eta| + |a \eps| + |\xi \eta| + |b \eps| )/2 } \\
 \qquad \quad{} \times	\int \big( \e\fff \big)^{D/16} \piint 	\sqrt{\Xi_{\a}(x,z,t)}\chi_{\{x+z\in\RRR\}},
\end{gather*}
provided that $(z,t) \in A$ and $\|x-y\|>2\|y-y'\|$. The required estimate follows by using Lemma~\ref{lem:intLusin}(a), Lemma~\ref{lem:bridge}
(specif\/ied to $p = 2$, $W = |n| + 2m$, $C = D/16$ and $u = 1$) and f\/inally Lemma~\ref{lem:double}.

The proof of Theorem~\ref{thm:kerest} is complete.
\end{proof}

\begin{proof}[Proof of Theorem~\ref{Sthm:kerest}]
Since the estimates of various derivatives of the heat kernels in the Laguerre--Dunkl and the Laguerre-symmetrized settings established in Lemma~\ref{lem:heatEST} are the same, the proof of Theorem~\ref{Sthm:kerest} is just a repetition of the arguments given in the proof of Theorem~\ref{thm:kerest} above. Therefore we omit the details.
\end{proof}

\appendix

\section{Appendix I} \label{sec:App}

In this short section we shall prove the following useful result.
\begin{Proposition}\label{pro:conv}
Let $1\le p<\infty$. If $W\in A^{\a}_p$ and $f\in L^p(W d\mu_{\a})$, then the series/integrals defining $\mathfrak{T}_t^{\alpha}f(x)$ and $\mathbb{T}_t^{\alpha} f(x)$ converge for every $x\in\RR$ and $t>0$ and produce smooth functions of $(x,t)\in \RR \times\mathbb{R}_{+}$. Similarly, if $U\in A^{\a,+}_p$ and $f\in L^p(U d\mu_{\a}^+)$, then the series/integrals defining $\mathfrak{T}_t^{\a,\eta,+}f(x)$ and $\mathbb{T}_t^{\alpha,\eta, +} f(x)$,
$\eta\in\{0,1\}^d$, converge for every $x\in\RRR$ and $t>0$ and produce smooth functions of $(x,t)\in \RRR \times\mathbb{R}_{+}$.
\end{Proposition}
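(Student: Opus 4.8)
The plan is to treat all four families of operators simultaneously by reducing them to two elementary facts: the Gaussian decay of the Laguerre heat kernel $G_t^{\a}$ in the spatial variables at fixed time, and the at most polynomial growth of Muckenhoupt weights in the underlying spaces of homogeneous type. First I would record the pointwise domination that reduces every kernel to the Laguerre one. For the restricted operators one has $\mathfrak{G}_t^{\a,\eta,+}(x,y)=(xy)^\eta G_t^{\a+\eta}(x,y)$ and $\mathbb{G}_t^{\a,\eta,+}(x,y)=e^{-2|\eta|t}(xy)^\eta G_t^{\a+\eta}(x,y)$, while for the global operators the decomposition \eqref{GD}, together with the bound $|\mathfrak{G}_t^{\a}(x,y)|\lesssim G_t^{\a}(x,y)$ and its analogue for $\mathbb{G}_t^{\a}$, reduces matters to finitely many terms $(xy)^\eta G_t^{\a+\eta}(x,y)$. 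From the explicit formula for $G_t^{\a}$ and the asymptotics $I_\nu(z)\sim c\,z^\nu$ as $z\to0^+$ and $I_\nu(z)\sim e^z/\sqrt{2\pi z}$ as $z\to\infty$, I would extract the key property: for each fixed $x$ and $t>0$ the function $y\mapsto G_t^{\a+\eta}(x,y)$ is bounded near the coordinate hyperplanes and decays like $\exp(-c\|y\|^2)$ as $\|y\|\to\infty$, with all bounds locally uniform on compact subsets of the $(x,t)$-space (the derivative version being exactly Lemma~\ref{lem:heatEST}, whose right-hand side is, for fixed $(x,t)$, a finite and Gaussian-decaying function of $y$).

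With this in hand, convergence of the integral representations is a H\"older argument. For $1<p<\infty$ I would split $|f(y)|=(|f(y)|U(y)^{1/p})\,U(y)^{-1/p}$ and bound
\[
\int |\mathfrak{G}_t^{\a,\eta,+}(x,y)|\,|f(y)|\,d\mu_{\a}^+(y)\le\|f\|_{L^p(Ud\mu_{\a}^+)}\Big(\int G_t^{\a+\eta}(x,y)^{p'}U(y)^{-1/(p-1)}\,d\mu_{\a}^+(y)\Big)^{1/p'},
\]
so that the task reduces to finiteness of the last integral at fixed $(x,t)$. The conjugate weight $U^{-1/(p-1)}$ lies in $A_{p'}^{\a,+}$, hence $U^{-1/(p-1)}d\mu_{\a}^+$ is doubling and therefore of polynomial growth, $U^{-1/(p-1)}(B(0,R))\lesssim R^{N}$; since $G_t^{\a+\eta}(x,\cdot)^{p'}$ is bounded near the boundary and decays faster than any negative power of $(1+\|y\|)$, the integral converges. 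For $p=1$ I would instead use that an $A_1^{\a,+}$ weight is bounded below by a negative power of $(1+\|y\|)$ (again a consequence of doubling together with the $A_1$ condition), so that $G_t^{\a+\eta}(x,\cdot)/U$ is bounded and Gaussian-decaying and the $p=1$ form of H\"older applies. The global statements are identical after inserting the finite sum from \eqref{GD}.

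Smoothness in $(x,t)$ and the series representations I would handle together. Differentiation under the integral sign is licensed by the dominated convergence theorem: the derivative bounds of Lemma~\ref{lem:heatEST} (and their global counterparts obtained from \eqref{GD}) are, at each order, majorized on a neighbourhood of any fixed $(x_0,t_0)\in\RRR\times\R_+$ (with $t$ kept in a compact subinterval of $\R_+$) by a function integrable against $|f|\,d\mu_{\a}^+$ via exactly the H\"older estimate above, yielding that the integral produces a $C^\infty$ function of $(x,t)$. The coincidence of the series and the integral representations then follows from the $L^2(d\mu_{\a})$ (resp.\ $L^2(d\mu_{\a}^+)$) spectral theory, in which both expressions define $\mathfrak{T}_t^{\a}$ (resp.\ its relatives): they agree on the dense class $L^2\cap L^p$, and the convergence just established shows the integral side depends continuously on $f$ pointwise in $(x,t)$, so the identity persists. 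Alternatively one proves convergence of the series directly, bounding $|\langle f,h_k^{\a}\rangle|$ by $\|f\|_{L^p(Wd\mu_{\a})}\|h_k^{\a}\|_{L^{p'}(W^{1-p'}d\mu_{\a})}$, which grows at most polynomially in $|k|$, while $|h_k^{\a}(x)|$ and its $(x,t)$-derivatives grow polynomially and $e^{-t\lambda_{|k|/2}^{\a}}$ decays super-polynomially; term-by-term differentiation then gives a smooth sum.

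The main obstacle is the weight bookkeeping: one must quantify the polynomial growth of the conjugate weight (for $p>1$) or the polynomial lower bound on the weight (for $p=1$) precisely enough that the Gaussian factor of $G_t^{\a+\eta}$ dominates, and---crucially for smoothness---one must make these bounds locally uniform in $(x,t)$ so as to justify differentiation under the integral and under the sum. A secondary point, genuinely present only for $\a\notin[-1/2,\infty)^d$, is the behaviour near the coordinate hyperplanes caused by the factors $(x_iy_i)^{-\a_i}$ in $G_t^{\a+\eta}$; here the small-argument asymptotics of $I_{\a_i+\eta_i}$ show that for fixed interior $x$ the kernel is in fact bounded as $y$ approaches the boundary, so these singular factors cause no difficulty and the entire weight of the argument rests on the interplay between Gaussian decay and Muckenhoupt growth.
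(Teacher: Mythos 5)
Your proposal is correct, but your primary route is organized differently from the paper's. The paper works on the series side first: it establishes the two polynomial bounds $|\langle f,h_k^{\a}\rangle_{d\mu_{\a}}|\lesssim(|k|+1)^{c}\|f\|_{L^p(Wd\mu_{\a})}$ and $|h_k^{\a}(x)|\lesssim(|k|+1)^{c}$ (imported from \cite{TZS3}, with the observation that the underlying Askey--Wainger--Muckenhoupt estimates for Laguerre functions hold for all type parameters $>-1$, hence all $\a\in(-1,\infty)^d$), concludes absolute convergence and term-by-term differentiability of the series against the super-polynomial decay of $e^{-t\lambda^{\a}_{|k|/2}}$ following \cite[pp.~647--648]{NoSt2}, and only then deduces the integral representation and smoothness; the restricted operators are handled via $\mathfrak{T}_t^{\a,\eta,+}f=(\mathfrak{T}_t^{\a}f^{\eta})^{+}$ with $f^{\eta}\in L^p(W d\mu_{\a})$, $W=U^{\boldsymbol 0}$. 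Your main argument instead starts from the kernel side (Gaussian decay of $G_t^{\a+\eta}(x,\cdot)$ versus polynomial growth of the conjugate Muckenhoupt weight, resp.\ the polynomial lower bound on an $A_1$ weight), which is a perfectly standard and correct way to get convergence and smoothness of the integrals; this is essentially the same bookkeeping that the paper deploys later for the kernel estimates, so nothing is lost. The one caveat is that the statement also asserts convergence of the defining \emph{series} for general $f\in L^p(Wd\mu_{\a})$, and your kernel route plus an $L^2$-density identification does not by itself deliver that: you still need the polynomial coefficient and eigenfunction bounds, which is exactly the content of your closing ``alternatively'' paragraph --- and that alternative \emph{is} the paper's proof. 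So to be complete you should treat that paragraph as an essential component rather than an option, at which point your argument subsumes the paper's and adds an independent, self-contained treatment of the integral representation.
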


\begin{proof} Recall that the def\/inition of $\mathfrak{T}_t^{\alpha}g$ for $g\in L^2( d\mu_{\a})$ is
\begin{gather} \label{srepr}
\mathfrak{T}_t^{\alpha} g(x) = \sum_{k \in \N^d} e^{-t \lambda_{|k|/2}^{\a}}
\langle g , h_k^{\a} \rangle_{d\mu_{\alpha}}h_k^{\a}(x),
\end{gather}
(convergence in $L^2( d\mu_{\a})$) and this easily leads to the integral representation
\begin{gather} \label{repr}
\mathfrak{T}_t^{\a} g (x)=\int_{\RRR} \mathfrak{G}_t^{\a} (x,y) g(y) \, d\mu_\a(y), \qquad g\in L^2( d\mu_{\a}),\qquad x\in \RR, \qquad t>0.
\end{gather}
To prove the claim for $\mathfrak{T}_t^{\alpha}f(x)$ we use the following two auxiliary results. Firstly, given $1\le p<\infty$, $W\in A^{\a}_p$ and $f\in L^p(W d\mu_{\a})$, the coef\/f\/icients $\langle f , h_k^{\a} \rangle_{d\mu_{\alpha}}$, $k \in \N^d$, exist and satisfy
\begin{gather} \label{esti1}
|\langle f , h_k^{\a} \rangle_{d\mu_{\alpha}}| \lesssim (|k|+1)^{c_{d,\a,p,W}}\|f\|_{L^p(W d\mu_{\a})},
\end{gather}
uniformly in $k \in \N^d$ and $f\in L^p(W d\mu_{\a})$. Secondly,
\begin{gather} \label{esti2}
|h_k^{\a}(x)| \lesssim (|k|+1)^{c_{d,\a,p}},
\end{gather}
uniformly in $k \in \N^d$ and $x\in \RR$. Then, following the argument from \cite[pp.~647--648]{NoSt2} one checks that after replacing $g\in L^2( d\mu_{\a})$ by $f\in L^p(W d\mu_{\a})$ in the right-hand side of \eqref{srepr} the series converges absolutely for any $x\in \RR$, $t>0$, and thus def\/ines $\mathfrak{T}_t^{\alpha} f(x)$. Moreover, with this def\/inition of $\mathfrak{T}_t^{\a} f (x)$ the integral representation~\eqref{repr} remains valid for $f$ replacing $g$ (in particular, the relevant integral converges for any $x\in \RR$ and $t>0$) and $\mathfrak{T}_t^{\alpha}f(x)$ is a $C^\infty$ function of $(x,t)\in \RR \times\mathbb{R}_{+}$.

Coming back to \eqref{esti1} and \eqref{esti2}, these are simple consequences of \cite[equations~(2.3) and~(2.4)]{TZS3}. The assumption $\a\in[-1/2,\infty)^d$ which was imposed in \cite{TZS3} is not essential for \cite[equations~(2.3) and (2.4)]{TZS3} to hold since the classical estimates for the standard Laguerre functions due to Askey, Wainger and Muckenhoupt, invoked in \cite[p.~1521]{TZS3}, are valid for any Laguerre type parameter greater than $-1$, thus $\a\in(-1,\infty)^d$ is admitted.

The claim for $\mathfrak{T}_t^{\a,\eta,+}f(x)$ follows due to the connection $\mathfrak{T}_t^{\a,\eta,+}f(x) = \big( \mathfrak{T}_t^{\a} f^\eta \big)^+(x)$, $x\in\RRR$, which holds for any $f\in L^p(U d\mu_{\a}^+)$, $U\in A^{\a,+}_p$, $1\le p<\infty$ (note that then $f^\eta \in L^p(W d\mu_{\a})$, where $W = U^{\boldsymbol{0}} \in A^{\a}_p$). Finally, the claims for $\mathbb{T}_t^{\alpha} f(x)$ and $\mathbb{T}_t^{\alpha,\eta, +} f(x)$ are verif\/ied by arguments analogous to those just presented.
\end{proof}

\section{Appendix II} \label{sec:App2}

For reader's convenience, in Table~\ref{table1} below we summarize the notation of various objects in the three contexts appearing in this paper.

\begin{table}[!ht] \centering\vspace*{-2mm}
	\caption{Summary of notation.}\label{table1} \vspace{1mm}
 \begin{tabular}{|c|c|c|c|} \hline
		 & Laguerre--Dunkl & Laguerre-symmetrized & Laguerre \tsep{1pt}\bsep{1pt}\\
\hline
Harmonic oscillator & $\mathfrak{L}_{\a}$ & $\mathbb{L}_{\a}$ & $L_{\a}$ \tsep{1pt}\bsep{1pt}\\
\hline
Eigenfunctions & $h_k^\a$ & $\Phi_k^\a$ & $\ell_k^\a$ \tsep{1pt}\bsep{1pt}\\
\hline
Reference measure & $\mu_{\a}$ & $\mu_{\a}$ & $\mu_{\a}^+$ \tsep{1pt}\bsep{1pt}\\
\hline
Derivatives & $\mathfrak{D}_i$, $\mathfrak{D}_i^{*}$, $\mathfrak{D}^{n,\omega}$
& $\mathbb{D}_i$, $\mathbb{D}^n$ & $\delta_i$, $\delta_i^*$, $\delta^n$, $D^n$ \tsep{1pt}\bsep{1pt}\\
\hline
Heat semigroup & $\mathfrak{T}_t^\a$ & $\mathbb{T}_t^\a$ & $T_t^\a$ \tsep{1pt}\bsep{1pt}\\
\hline
Heat kernel & $\mathfrak{G}_t^\a (x,y)$ & $\mathbb{G}_t^\a (x,y)$ & $G_t^\a (x,y)$ \tsep{1pt}\bsep{1pt}\\
\hline
Maximal operator & $\mathfrak{T}_*^\a$ & $\mathbb{T}_*^\a$ & \tsep{1pt}\bsep{1pt}\\
\hline
Riesz transforms & $\mathfrak{R}_{n,\omega}^\a$ & $\mathbb{R}_{n}^\a$ & $R_n^\a$ \tsep{1pt}\bsep{1pt}\\
\hline
Multipliers & $\mathfrak{M}_{\mathfrak{m}}^\a$ & $\mathbb{M}_{\mathfrak{m}}^\a$ & \tsep{1pt}\bsep{1pt}\\
\hline
$g$-functions & $\mathfrak{g}_{n,m,\omega}^\a$ & $\mathbb{g}_{n,m}^\a$ & $g_{n,m}^\a$ \tsep{1pt}\bsep{1pt}\\
\hline
Lusin area integrals & $\mathfrak{S}_{n,m,\omega}^\a$ & $\mathbb{S}_{n,m}^\a$ & $S_{n,m}^\a$, $s_{n,m}^\a$\tsep{1pt}\bsep{1pt}\\
\hline
Main results
& Theorems \ref{thm:main}, \ref{thm:mainP}
& Theorem~\ref{Sthm:main} & Theorems \ref{thm:lag}, \ref{thm:lags}\tsep{1pt}\bsep{1pt}\\
\hline
\end{tabular}\vspace{-3mm}
\end{table}

\subsection*{Acknowledgements}
Research of the f\/irst-named and the second-named authors was supported by the National Scien\-ce Centre of Poland, project no.~2013/09/B/ST1/02057. The third-named author was partially supported by the National Science Centre of Poland, project no.~2012/05/N/ST1/02746.

\pdfbookmark[1]{References}{ref}
\LastPageEnding

\end{document}